\documentclass[10pt]{amsart}
\usepackage[utf8]{inputenc}
\usepackage{amsmath}
\usepackage{amssymb}

\makeatletter
\@namedef{subjclassname@2010}{
 \textup{2010} Mathematics Subject Classification}
\makeatother

\newtheorem{Thm}{Theorem}[section]
\newtheorem{Prp}[Thm]{Proposition}
\newtheorem{Cor}[Thm]{Corollary}
\newtheorem{Lma}[Thm]{Lemma} 
\theoremstyle{definition}
\newtheorem{Def}[Thm]{Definition}
\newtheorem{Rk}[Thm]{Remark}

\numberwithin{equation}{section}

\newcommand{\gl}{\mathfrak{gl}}
\newcommand{\ad}{\mathrm{ad}}
\newcommand{\GL}{\mathrm{GL}}
\newcommand{\Ker}{\mathrm{Ker}}
\newcommand{\End}{\mathrm{End}}

\newcommand{\U}{\mathrm{U}}
\newcommand{\tr}{\mathrm{tr}}
\newcommand{\diag}{\mathrm{diag}}
\newcommand{\Gal}{\mathrm{Gal}}
\newcommand{\sll}{\mathfrak{sl}}
\newcommand{\su}{\mathfrak{su}}
\newcommand{\g}{\mathfrak{g}}
\newcommand{\h}{\mathfrak{h}}
\newcommand{\Ad}{\mathrm{Ad}}
\newcommand{\ch}{\mathrm{char}}
\newcommand{\Br}{\mathrm{Br}}
\newcommand{\Id}{\mathrm{Id}}
\newcommand{\Lie}{\mathrm{Lie}}
\newcommand{{\fb}}{\overline{F}}

\begin{document}
\title[Lie bialgebras and Hilbert's Seventeenth Problem]{Lie bialgebras, Fields of Cohomological Dimension at Most 2 and Hilbert's Seventeenth Problem}
\author[S.\ Alsaody and A.\ Stolin]{Seidon Alsaody and Alexander Stolin}
\address{Department of Mathematical Sciences\\ Chalmers University of Technology and the University of Gothenburg\\412 96 G\"oteborg\\ Sweden}

\begin{abstract} We investigate Lie bialgebra structures on simple Lie algebras of non-split type $A$. It turns out that there are several classes of such Lie bialgebra structures, and it is possible to classify some of them. The classification is obtained using Belavin--Drinfeld cohomology sets, which are introduced in the paper. Our description is particularly detailed over fields of cohomological dimension at most two, and is related to quaternion algebras and the Brauer group. We then extend the results to certain rational function fields over real closed fields via Pfister's theory of quadratic forms and his solution to Hilbert's Seventeenth Problem.
\end{abstract}

\subjclass[2010]{17B62, 17B37, 11E04, 11E10, 11E25}
\keywords{Lie bialgebra, compact type, quantum group, Belavin--Drinfeld cohomology, Pfister form, quaternions, Brauer group}
\maketitle

\section{Introduction}
The study of quantum groups was initiated by Kulish and Reshetikhin in \cite{KR} and developed independently by Drinfeld \cite{Dr} and Jimbo \cite{Ji} in the 1980s. Over the past three decades, the area has seen major activity in various directions. 

Quantum groups are deformations of universal enveloping algebras of Lie algebras. More specifically, if $F$ is a field of characteristic zero, then by a \emph{quantum group} we understand a topologically free Hopf algebra $U_\hbar$ over the ring $F[[\hbar]]$ of formal power series in $F$, such that, over $F$, the quotient $U_\hbar/\hbar U_\hbar$ is isomorphic to the universal enveloping algebra $U(\g)$ of some $F$-Lie algebra $\g$. In \cite{EK1} and \cite{EK2}, Etingof and Kazhdan constructed their quantization functors, thereby establishing an equivalence of categories that relates the problem of classifying quantum groups to that of classifying Lie bialgebras over $F[[\hbar]]$. If $\g$ is finite-dimensional, the problem can be reduced further to the classification of Lie bialgebra structures on the scalar extension $\g_{F((\hbar))}$ of $\g$ to the field $F((\hbar))$. This spurred the motivation to classify Lie 
bialgebras over fields of characteristic 
zero which are not algebraically closed.

Over algebraically closed fields, Lie bialgebra structures on simple Lie algebras have been classified by Belavin and Drinfeld \cite{BD}. Over non-closed fields, results have been obtained by Stolin and co-authors, upon introducing a cohomology theory known as \emph{Belavin--Drinfeld cohomology}. This descent-type method resembles that of Galois cohomology, and has been applied to various split Lie algebras over fields which are not algebraically closed. The aim of this paper is to extend it to non-split Lie algebras. We investigate the situation for such algebras of type $A$. 

More specifically, over a field $F$ of characteristic zero with a quadratic field extension $K=F(\sqrt{d})$, we consider the Lie algebra $\su(n,F,d)$ of all $A\in\sll(n,K)$ satisfying $\overline{A}^T+A=0$, where the conjugation $A\mapsto \overline{A}$ is induced by the non-trivial element of the Galois group $\Gal(K/F)$. We then ask when a Lie bialgebra structure on $\sll(n,K)$ descends to $\su(n,F,d)$, and study the behaviour of these structures. Over the algebraic closure $\fb$, any such structure is a coboundary Lie bialgebra, gauge equivalent to the coboundary of $\lambda r_{BD}$ for some $\lambda\in\fb$ and a non-skew symmetric $r$-matrix $r_{BD}$ in the Belavin--Drinfeld classification. We prove that there are three possibilities for $\lambda$; namely, up to a scalar multiple in $F$, we have $\lambda=1$, $\lambda=\sqrt{d}$ and $\lambda=\sqrt{d'}$ for some $d'\in K^*\setminus {K^*}^2$. We will refer to these three types of Lie bialgebra structures as \emph{basic}, \emph{quadratic} and \emph{
twisted}, respectively. In the quadratic and twisted case, we show that such Lie bialgebra structures exist only if $r_{BD}$ is essentially of Drinfeld--Jimbo type. Our investigation is particularly detailed in the quadratic case, where the Drinfeld double of the Lie bialgebra is $\sll(n,K)$ itself. There we achieve a classification of these Lie bialgebra structures for $r$-matrices of Drinfeld--Jimbo type, over fields of cohomological dimension at most 2, as well as over function fields in at most 2 indeterminates over real-closed fields.

The paper is organized as follows. In Section 2 we give the necessary preliminaries and then focus on preparing the setting for the definition and characterization of the necessary cohomology theory in the case where the Lie bialgebra is of quadratic Drinfeld--Jimbo type. We then derive the cohomology theory in Section 3. In Section 4 we use quaternion algebras to give a construction of certain cocycles over arbitrary fields of characteristic zero. This enables us to accomplish, in Section 5, a fairly explicit classification over several classes of fields, linking the problem to Pfister's theory of quadratic forms and Hilbert's Seventeenth Problem. In Section 6 we consider structures of basic type, and set up the Belavin--Drinfeld cohomology. We get a classification of Lie bialgebras of Drinfeld--Jimbo type, and structural results for the other types. In Section 7 we finally consider the twisted case, showing that Drinfeld--Jimbo Lie bialgebras are essentially the only ones possible, and deriving a 
cohomology theory in this case.

\section{Preliminaries}
\subsection{Lie Bialgebras and $r$-Matrices}
Let $G$ be a finite-dimensional reductive algebraic group over a field $F$ of characteristic zero, and $\g=(\g,[,])$ the semisimple part of $\Lie(G)$. A \emph{Lie coalgebra structure} on $\g$ is a map $\delta:\g\to\g\otimes\g$ such that its transpose is a Lie algebra structure on the dual space $\g^*$. If $\delta$ moreover satisfies the \emph{cocycle condition}
\[\delta([a,b])=(\ad_a\otimes 1+1\otimes\ad_a)\delta(b)-(\ad_b\otimes 1+1\otimes\ad_b)\delta(a)\]
for any $a,b\in \g$, then $(\g,[,],\delta)$ is called a \emph{Lie bialgebra}. Abusing notation, we will speak of $\g$ as a Lie bialgebra, suppressing the algebra and coalgebra structures whenever they are understood. Two Lie bialgebra structures $\delta_1$ and $\delta_2$ on $(\g,[,])$ are called \emph{equivalent} if there exists $\lambda\in F^*$ and $X\in G(F)$ such that
\[\delta_2=\lambda(\Ad_X\otimes \Ad_X)\delta_1,\]
and \emph{gauge equivalent} if this holds with $\lambda=1$. A Lie bialgebra $(\g,[,],\delta)$ is called a \emph{coboundary Lie bialgebra} if $\delta=\partial r$ for some $r\in\g\otimes \g$, viz.
\[\delta(a)=-(\ad_a\otimes 1+1\otimes\ad_a)r\]
for all $a\in\g$. Embedding into the universal enveloping algebra of $\g$, will often write this in the form
\[\delta(a)=[r,a\otimes 1+1\otimes a].\]
We will use the term (gauge) equivalent referring to $r$-matrices whose coboundary Lie bialgebras are (gauge) equivalent. Note that both equivalence and gauge equivalence depend on the field of scalars.

Over algebraically closed fields, it is known that every Lie bialgebra structure on a finite-dimensional simple Lie algebra $\g$ is a coboundary Lie bialgebra structure $\partial r$ where $r$ is an $r$-matrix, i.e.\ a solution to the classical Yang--Baxter equation $\mathrm{CYB}(r)=0$, which further satisfies that $r+r^{21}$ is $\g$-invariant, where $r^{21}=\sum b_i\otimes a_i$ whenever $r=\sum a_i\otimes b_i$. The Yang--Baxter operator $\mathrm{CYB}:\g^{\otimes 2}\to \g^{\otimes 3}$ is defined by
\[\mathrm{CYB}(r)=[r_{12},r_{13}]+[r_{12},r_{23}]+[r_{13},r_{23}],\]
where the commutator is that of the tensor power of the universal enveloping algebra, and the notation $r_{ij}$ is defined by setting e.g.\ $(a\otimes b)_{13}=a\otimes 1\otimes b$, and extending by linearity.

Belavin and Drinfeld achieved, in \cite{BD}, a classification of such $r$-matrices. Let $\g$ be a finite-dimensional simple Lie algebra over an algebraically closed field $F$, and fix a Cartan subalgebra $\h$ of $\g$ with an orthonormal basis $\{h_i\}$, a root system $\Delta$ and a subset $\Delta^+$ of positive roots. Write $e_{\alpha}$ for the Chevalley generator corresponding to $\alpha\in\Delta$, $\Omega$ for the Casimir element, and $\Omega_0$ for its $\h$-component. An \emph{admissible triple} is then a triple $(\Gamma_1,\Gamma_2,\tau)$, where $\Gamma_1$ and $\Gamma_2$ are subsets of the set $\Gamma$ of simple roots, and $\tau:\Gamma_1\to\Gamma_2$ is an isometric bijection such that for each $\alpha\in\Gamma_1$ there is $k\in\mathbb N$ such that $\tau^k(\alpha)\notin\Gamma_1$. The Belavin--Drinfeld classification then reads as follows.

\begin{Thm}\label{TBD} Every Lie bialgebra structure $\delta$ on $\g$ satisfies $\delta=\partial r$ for some $r$-matrix $r$. If $r$ is not skewsymmetric, then $r$ is equivalent to
\[r_{BD}=r_0+r_1\]
where $r_0\in\h\otimes\h$ satisfies $r_0+r_0^{21}=\Omega_0$, and for some admissible triple $(\Gamma_1,\Gamma_2,\tau)$,
\[r_1=\sum_{\alpha\in\Delta^+} e_{\alpha}\otimes e_{-\alpha}+\sum_{\alpha\in\mathrm{Span}(\Gamma_1)^+}\sum_{k\in\mathbb N} e_{\alpha}\wedge e_{-\tau^k(\alpha)}\]
  and
\[\forall \alpha\in\Gamma_1: (\tau(\alpha)\otimes\Id+\Id\otimes\alpha)(r_0)=0.\]
\end{Thm}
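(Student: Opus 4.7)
The plan is to prove the two assertions separately.

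For the first assertion ($\delta=\partial r$), the cocycle condition defining a Lie bialgebra structure on $\g$ is precisely the statement that $\delta$ is a $1$-cocycle on $\g$ with values in $\g\otimes\g$ under the diagonal adjoint action, and $\delta=\partial r$ says that $\delta$ is a $1$-coboundary. By Whitehead's first lemma, which applies because $\g$ is semisimple over a field of characteristic zero, $H^1(\g,V)=0$ for every finite-dimensional $\g$-module $V$; specializing to $V=\g\otimes\g$ gives the result.

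For the second assertion, I would first exploit that $r+r^{21}$ is $\g$-invariant: since $\g$ is simple, $(\g\otimes\g)^\g$ is one-dimensional and spanned by $\Omega$, so $r+r^{21}=c\Omega$ for some $c\in F$. Non-skewsymmetry forces $c\neq 0$, and rescaling $r$ by $c^{-1}$ (an equivalence in the sense defined) reduces to $r+r^{21}=\Omega$. Writing $r=r_0+\sum c_{\alpha,\beta}\,e_{\alpha}\otimes e_{\beta}$ in the weight decomposition of $\g\otimes\g$ under $\h\otimes\h$, with $r_0\in\h\otimes\h$, the relation $r+r^{21}=\Omega$ immediately yields $r_0+r_0^{21}=\Omega_0$ and pins down the symmetric parts of the off-diagonal coefficients. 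The bulk of the work is to extract the admissible triple and the precise form of $r_1$ from the CYBE. I would analyze $\mathrm{CYB}(r)=0$ component by component in the weight decomposition of $\g^{\otimes 3}$: each off-diagonal term $e_\alpha\otimes e_\beta$ of $r$ with $\alpha+\beta\neq 0$ propagates through the CYBE and forces further root-space components to appear in a pattern governed by the root system. Using gauge freedom (in particular conjugation by elements of the maximal torus), this pattern can be normalized into an isometric partial bijection $\tau$ on a subset $\Gamma_1$ of simple roots, with image $\Gamma_2$, and the nilpotency condition on $\tau$ then follows from finiteness of $\Gamma$ together with the observation that a $\tau$-cycle inside $\Gamma_1$ would produce an infinite tower of incompatible CYBE constraints.

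The main obstacle, as I see it, is precisely this combinatorial extraction and normalization: showing that the CYBE constraints force a structure as rigid as an admissible triple (rather than a more general partial isometry of $\Delta$), and using gauge equivalence to reduce $r_1$ to the stated canonical form. The constraint $(\tau(\alpha)\otimes\Id+\Id\otimes\alpha)(r_0)=0$ I would expect to emerge as the compatibility relation between the Cartan part $r_0$ and the discrete datum $\tau$, obtained by isolating the mixed components of type $h\otimes e_{\tau(\alpha)}\otimes e_{-\alpha}$ in $\mathrm{CYB}(r)=0$ and matching the $r_0$-contribution against the contribution of $r_1$.
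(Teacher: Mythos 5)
This statement is the Belavin--Drinfeld classification itself; the paper does not prove it but imports it from \cite{BD}, so your proposal is being measured against the original proof rather than anything in this text. Already your first step is incomplete: Whitehead's first lemma does give $\delta=\partial r$ for some $r\in\wedge^2\g$, but that alone does not make $r$ an $r$-matrix in the sense required (a solution of $\mathrm{CYB}(r)=0$ with $\g$-invariant symmetric part). One still has to argue that the co-Jacobi identity for $\delta=\partial r$ forces $\mathrm{CYB}(r)$ to be a $\g$-invariant element of $\wedge^3\g$, that this invariant space is one-dimensional for simple $\g$, and that replacing $r$ by $r+a\Omega$ (which leaves $\partial r$ unchanged, since $\Omega$ is invariant) changes $\mathrm{CYB}(r)$ by $a^2$ times the canonical invariant $3$-tensor, so that over an algebraically closed field a suitable $a$ kills the obstruction and yields $r+r^{21}=2a\Omega$. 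Without this correction step the clause ``for some $r$-matrix $r$'' is not established.

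The larger gap is in the second assertion: everything that makes the theorem hard --- showing that the constraints from $\mathrm{CYB}(r)=0$, after gauge normalization by the torus and the Weyl group, produce exactly an isometric bijection $\tau:\Gamma_1\to\Gamma_2$ between subsets of \emph{simple} roots subject to the no-cycle condition, that $r_1$ can be brought to the stated canonical form, and that the compatibility $(\tau(\alpha)\otimes\Id+\Id\otimes\alpha)(r_0)=0$ holds --- is exactly what you defer as ``the main obstacle,'' with phrases like ``I would analyze'' and ``I would expect to emerge.'' You also write $r=r_0+\sum c_{\alpha,\beta}\,e_\alpha\otimes e_\beta$ with $r_0\in\h\otimes\h$, silently discarding possible components in $\h\otimes\g_\alpha$ and $\g_\alpha\otimes\h$, which must be ruled out or gauged away as part of the argument. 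As it stands the proposal is a reasonable outline of the Belavin--Drinfeld strategy, not a proof; since the paper itself simply cites \cite{BD}, the options are to do likewise or to supply the full combinatorial analysis, which is the substance of that original work.
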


\begin{Rk}
We call any such $r$-matrix $r_{BD}$ a \emph{Belavin--Drinfeld $r$-matrix}. Whenever we write $r_{BD}=r_0+r_1$, this will refer to the decomposition above. 
\end{Rk}

For the most part, we shall focus on a particular such $r$-matrix, namely the \emph{standard} or \emph{Drinfeld--Jimbo} $r$-matrix which, in a sense, is the simplest case. Fixing $\h$, $\{h_i\}$, $\Delta$ and $\Delta^+$, and writing $e_{\alpha}$ as above, the associated Drinfeld--Jimbo $r$-matrix 
is
\[r_{DJ}=\frac{1}{2}\sum_i h_i\otimes h_i+\sum_{\alpha\in\Delta^+} e_{\alpha}\otimes e_{-\alpha}.\]

For the remainder of the paper, we fix a field $F$ of characteristic zero having a quadratic extension $K$, and fix $d\in F^*\setminus F^{*2}$ with $K=F(\sqrt{d})$. When additional assumptions are made on the field, these will be stated explicitly. All algebras and bialgebras are assumed to be finite-dimensional over their respective ground fields.

The non-identity element of the Galois group $\Gal(K/F)$ maps $z=a+b\sqrt{d}$ to $\overline z=a-b\sqrt{d}$ for any $a,b\in F$, and extends in the usual way to $\gl(n,K)$ for any positive integer $n$. We let $N:K\to F$ denote the norm function, which is given by $N(z)=\overline{z}z$. For any $n$, we consider the $F$-algebraic group whose group of rational points is
\[\U(n,d)(F)=\{X\in\GL(n,K): \overline{X}^TX=1\}\]
and the corresponding simple Lie algebra
\[\su(n,F,d)=\{A\in\sll(n,K):\overline A^T+A=0\}.\]
This generalizes the well-known $\su_n=\su(n,\mathbb R,-1)$.

We will henceforth write $\g$ for $\su(n,F,d)$. Extending scalars to $K$ and writing $\g_K$ the corresponding extension $K\otimes_F\g$ of $\g$, we see that $\g_K=\sll(n,K)$. 

Let $\delta$ be a Lie bialgebra structure on $\g$. Extending scalars to an algebraic closure ${\fb}$ containing $K$, we obtain a Lie bialgebra structure $\delta_{{\fb}}$ on $\g_{{\fb}}=\sll(n,{\fb})$. By the above there exists $\lambda\in{\fb}^*$, $X\in\GL(n,{\fb})$ and a Belavin--Drinfeld $r$-matrix $r_{BD}$ such that $\overline{\delta}=\partial r$ with
\[r=\lambda (\Ad_X\otimes\Ad_X)(r_{BD}).\]
However, the converse is not true, i.e.\ not every choice of $\lambda$, $X$ and $r_{BD}$ is such that $\partial r$ descends to a Lie bialgebra structure on $\g$. 

A first step is to determine those values of $\lambda$ for which $\partial r$ descends to $\g_K$. This was done in \cite{PS} and gives the following necessary conditions.

\begin{Thm}\label{TPS} Let $\delta_K$ be a Lie bialgebra structure on $\sll(n,K)$ with Drinfeld double $D$. Then $D$ satisfies precisely one of the following conditions.
\begin{enumerate}
\item $D\simeq \sll(n,K)\otimes K^2$. Then $\delta_K=\partial r$ where $r=\lambda (\Ad_X\otimes\Ad_X)(r_{BD})$ for some $\lambda\in K^*$, $X\in \GL(n,\overline{F})$ and with $r_{BD}$ a Belavin--Drinfeld $r$-matrix over $\overline{F}$. 
\item $D\simeq \sll(n,K)\otimes K(\sqrt{d'})$ for some $d'\in K^*\setminus {K^*}^2$. Then $\delta_K=\partial r$ where $r=\lambda\sqrt{d'} (\Ad_X\otimes\Ad_X)(r_{BD})$ for some $\lambda\in K^*$, $X\in \GL(n,\overline{F})$ and with $r_{BD}$ a Belavin--Drinfeld $r$-matrix over $\overline{F}$.
\item $D\simeq \sll(n,K)\otimes K[\epsilon]$ with $\epsilon^2=0$.
\end{enumerate}
\end{Thm}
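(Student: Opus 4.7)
The plan is to analyze the Drinfeld double $D$ of $(\sll(n,K),\delta_K)$ by first extending scalars to ${\fb}$, applying the Belavin--Drinfeld classification there, and then descending back to $K$. Extending $\delta_K$ to a Lie bialgebra structure on $\sll(n,{\fb})$ and invoking Theorem~\ref{TBD}, one may write $\delta_{{\fb}}=\partial r$ with $r=\lambda_0(\Ad_X\otimes\Ad_X)(r_{BD})$ for some $\lambda_0\in{\fb}$, $X\in\GL(n,{\fb})$, and some Belavin--Drinfeld $r$-matrix $r_{BD}$, where $\lambda_0=0$ is allowed to cover the skew-symmetric case. In particular $r+r^{21}=\lambda_0\Omega$.

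Next I would analyze $D\otimes_K{\fb}$ in terms of $\lambda_0$. When $\lambda_0=0$ the bracket induced on $\g^*$ inside $D$ is abelian, so $D_{{\fb}}$ is the semidirect product $\sll(n,{\fb})\ltimes\sll(n,{\fb})^*$; identifying $\sll(n,{\fb})^*$ with $\sll(n,{\fb})$ via the Killing form gives $D_{{\fb}}\cong\sll(n,{\fb})\otimes_{{\fb}}{\fb}[\epsilon]$. When $\lambda_0\neq 0$ the symmetric part $\lambda_0\Omega$ is non-degenerate, and a standard Manin triple computation yields $D_{{\fb}}\cong\sll(n,{\fb})\oplus\sll(n,{\fb})\cong\sll(n,{\fb})\otimes_{{\fb}}{\fb}^2$, with $\g_{{\fb}}$ embedded diagonally.

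To descend I would consider the centroid $\mathcal{C}:=\End_D(D)$, a $2$-dimensional commutative $K$-algebra whose scalar extension to ${\fb}$ is ${\fb}[\epsilon]$ in the nilpotent case and ${\fb}\times{\fb}$ in the semisimple case. Over $K$ the only possibilities are therefore $K[\epsilon]$, $K\times K$, or some quadratic field extension $K(\sqrt{d'})$. Since $D$ contains the split algebra $\sll(n,K)$ compatibly with the isomorphism $D_{{\fb}}\cong\sll(n,{\fb})\otimes_{{\fb}}\mathcal{C}_{{\fb}}$, one concludes $D\cong\sll(n,K)\otimes_K\mathcal{C}$, yielding the three cases of the statement.

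Finally, to pin down $\lambda_0$: the element $\lambda_0\Omega$, being the symmetric part of $r$, singles out, via the canonical pairing on $D$, a generator of $\mathcal{C}_{{\fb}}$ transverse to the diagonal. Tracking the action of $\Gal({\fb}/K)$, in case (1) the centroid is $K\times K$ and one normalizes $\lambda_0\in K^*$; in case (2) the non-trivial element of $\Gal(L/K)$ for $L=K(\sqrt{d'})$ acts on this generator by $-1$, forcing $\lambda_0=\lambda\sqrt{d'}$ with $\lambda\in K^*$. The hard part will be precisely this last step: one must verify carefully that the available freedom in $r$ (addition of $\g$-invariant elements, gauge equivalence over $K$, and rescaling by $F^*$) suffices to place $\lambda_0$ in the claimed subset of ${\fb}^*$.
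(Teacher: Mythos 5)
You should first note that the paper itself gives no proof of Theorem~\ref{TPS}: it is quoted from \cite{PS} (``This was done in \cite{PS}\dots''), so there is no internal argument to compare with; your proposal must therefore stand on its own, and as written it has two genuine gaps. The first is in your treatment of the degenerate case. You claim that when the symmetric part of $r$ vanishes ``the bracket induced on $\g^*$ inside $D$ is abelian,'' so that $D_{\fb}\simeq\sll(n,\fb)\ltimes\sll(n,\fb)^*$. That premise is false: the bracket on $\g^*\subset D$ is the transpose of $\delta$, and for a simple Lie algebra a nonzero skew-symmetric $r$ is never invariant (there are no nonzero invariant alternating $2$-tensors), so $\delta\neq 0$ and $\g^*$ is not abelian. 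The conclusion $D_{\fb}\simeq\sll(n,\fb)\otimes\fb[\epsilon]$ for triangular structures is correct but needs an actual construction (an explicit embedding built from $r$ and the Killing form), not the semidirect-product shortcut. Relatedly, writing $r=\lambda_0(\Ad_X\otimes\Ad_X)(r_{BD})$ ``with $\lambda_0=0$ allowed'' does not cover the skew case, since $\lambda_0=0$ gives $r=0$; the correct dichotomy is on $r+r^{21}=\lambda_0\Omega$ with $\lambda_0=0$ versus $\lambda_0\neq0$, and only in the latter case does Theorem~\ref{TBD} give the stated form of $r$.

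The second gap is that the step you defer as ``the hard part'' is precisely the content of items (1) and (2) beyond the isomorphism type of $D$: that $\lambda_0$ may be taken in $K^*$, respectively in $\sqrt{d'}K^*$ with the \emph{same} $d'$ that generates the quadratic algebra in the double. Your appeal to ``a generator of $\mathcal{C}_{\fb}$ transverse to the diagonal'' does not decide this. The standard mechanism is the one the paper itself uses just after the theorem: since $\delta_K$ is defined over $K$, every $\sigma\in\Gal(\fb/K)$ satisfies $\sigma(r)-r\in\fb\,\Omega$ (the argument of Remark~\ref{Rlambda} adapted to $\g_K$); applying $\sigma$ to $r+r^{21}=\lambda_0\Omega$ and invoking Lemma~\ref{LK} forces $\sigma(\lambda_0)=\pm\lambda_0$ for all $\sigma$, hence $\lambda_0^2\in K^*$, and the subfield $K(\lambda_0)$ is then identified with the quadratic algebra acting on $D$, which pins down $d'=\lambda_0^2$ modulo squares. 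Finally, your descent step ``$D$ contains $\sll(n,K)$, hence $D\simeq\sll(n,K)\otimes_K\mathcal{C}$'' deserves justification: one should check that the multiplication map $\mathcal{C}\otimes_K\sll(n,K)\to D$ is an isomorphism (injectivity via simplicity of $\sll(n,\mathcal{C})$ when $\mathcal{C}$ is $K\times K$ or a field, a short analysis of ideals of $\sll(n,K)\otimes K[\epsilon]$ in the nilpotent case, then a dimension count). With these points repaired your centroid-based route is viable and close in spirit to the argument in \cite{PS}.
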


We refer to \cite{ES} or \cite{PS} for the definition of the Drinfeld double of a Lie bialgebra.

Throughout, we will use the notation $X^*=\overline{X}^T$ and $(X\otimes Y)^*=\overline{X}^T\otimes \overline{Y}^T$ for any $X,Y\in \gl(n,K)$, which we extend to $\gl(n,K)\otimes \gl(n,K)$.

\begin{Rk}\label{Rlambda} An $r$-matrix $s\in\g_{\fb}\otimes\g_{\fb}$ defines a Lie bialgebra structure on $\g$ if and only if for all $a\in \g$ there exists $b\in\g\otimes\g$ such that
\[b=[s,1\otimes a+a\otimes 1].\] 
Since $b$ and $a$ are invariant under the action induced by any $\sigma\in\Gal(\fb/K)$, we find that for any such $\sigma$, $s-\sigma(s)$ commutes with $1\otimes a+a\otimes 1$ for all $a\in\g$. Noting further that $b^*=b$ and applying $^*$ to both sides, we get
\[b=[1\otimes (-a)+(-a)\otimes 1,s^*]=[s^*,1\otimes a+a\otimes 1].\] 
Thus $s-s^*$ commutes with $1\otimes a+a\otimes 1$ for all $a\in\g$ as well. Since $g_{\fb}$ has an $\fb$-basis consisting of elements in $\g$, this implies that
\[\begin{array}{lll}
 \forall \sigma\in\Gal(\fb/K): s-\sigma(s)\in\fb\Omega& \text{and}& s-s^*=\in\fb\Omega.
  \end{array}
 \]

Conversely, if $s$ satisfies these two conditions, then for all $a\in\g$, $[s,1\otimes a+a\otimes 1]$ is invariant under any $\sigma\in\Gal(\fb/K)$  and under $^*$. Using the fact that $g_{\fb}$ has an $\fb$-basis consisting of elements in $\g$, one then deduces that $[s,1\otimes a+a\otimes 1]\in\g\otimes\g$
\end{Rk}

To determine the possible values of $\lambda$ for which one also has a Lie bialgebra structure on $\g$, the following result from \cite{KKPS} is useful.

\begin{Lma}\label{LK} If $r$ and $r'$ are non-skewsymmetric $r$-matrices satisfying $r+r^{21}=\lambda\Omega$ and $r'=r-\mu\Omega$, then either $\mu=0$ or $\mu=\lambda$. 
\end{Lma}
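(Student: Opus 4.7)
The plan is to compute $\mathrm{CYB}(r-\mu\Omega)$ as a polynomial in $\mu$, use the hypothesis that $r$ and $r'$ both satisfy $\mathrm{CYB}=0$ to extract a constraint on $\mu$, and exhibit $\mu=\lambda$ as a known second solution in order to pin the polynomial down.

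First I would expand by bilinearity of each bracket on $\g^{\otimes 3}$:
\[
\mathrm{CYB}(r - \mu\Omega) = \mathrm{CYB}(r) - \mu A + \mu^{2}\,\mathrm{CYB}(\Omega),
\]
where
\[
A = [\Omega_{12},r_{13}] + [r_{12},\Omega_{13}] + [\Omega_{12},r_{23}] + [r_{12},\Omega_{23}] + [\Omega_{13},r_{23}] + [r_{13},\Omega_{23}].
\]
Since by assumption $\mathrm{CYB}(r)=\mathrm{CYB}(r')=0$, this collapses to the single relation
\[
\mu\bigl(\mu\,\mathrm{CYB}(\Omega) - A\bigr) = 0 \quad \text{in } \g^{\otimes 3}.
\]

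Next I would observe that $-r^{21}$ is itself an $r$-matrix: applying the permutation $(1\,3)$ of tensor factors to $\mathrm{CYB}(r)$ yields $-\mathrm{CYB}(-r^{21})$, by a direct check using antisymmetry of the bracket, so one vanishes precisely when the other does. The hypothesis $r+r^{21}=\lambda\Omega$ rewrites as $-r^{21}=r-\lambda\Omega$, so $\mu=\lambda$ is a solution to our equation; substituting back into the quadratic identity forces $A = \lambda\,\mathrm{CYB}(\Omega)$, and hence for every admissible $\mu$,
\[
\mu(\mu-\lambda)\,\mathrm{CYB}(\Omega) = 0.
\]

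It then suffices to invoke the classical fact that $\mathrm{CYB}(\Omega)\neq 0$ on any simple Lie algebra: up to a nonzero scalar, this tensor is the invariant alternating three-tensor determined by the bracket and the Killing form, which does not vanish when $\g$ is simple. The conclusion $\mu\in\{0,\lambda\}$ follows. The main obstacle I would anticipate is a head-on identification of $A$ as a scalar multiple of $\mathrm{CYB}(\Omega)$; this is avoided entirely by the symmetry $r\mapsto-r^{21}$, which both produces the second solution $\mu=\lambda$ and fixes the scalar without any computation.
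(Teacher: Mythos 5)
Your argument is correct, but note that the paper itself offers no proof of this lemma to compare against: it is imported verbatim from \cite{KKPS} (``the following result from \cite{KKPS} is useful''), so what you have written is an independent, self-contained argument, and it is the natural one. The expansion $\mathrm{CYB}(r-\mu\Omega)=\mathrm{CYB}(r)-\mu A+\mu^{2}\mathrm{CYB}(\Omega)$ is right, the observation that $\sigma_{13}$ is an algebra automorphism of $U(\g)^{\otimes 3}$ carrying $\mathrm{CYB}(r)$ to $-\mathrm{CYB}(r^{21})=-\mathrm{CYB}(-r^{21})$ is right, and using $-r^{21}=r-\lambda\Omega$ as the second root to fix $A=\lambda\,\mathrm{CYB}(\Omega)$ is a clean way to avoid computing $A$ directly. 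Two small points deserve to be made explicit. First, solving for $A$ after substituting $\mu=\lambda$ requires $\lambda\neq 0$; this is precisely where the non-skewsymmetry of $r$ enters, since $r+r^{21}=\lambda\Omega$ with $\lambda=0$ would make $r$ skewsymmetric. (Your proof never uses that $r'$ is non-skewsymmetric, so you in fact prove slightly more, e.g.\ that the skew part $r-\tfrac{\lambda}{2}\Omega$ is never a solution of CYB.) Second, the nonvanishing of $\mathrm{CYB}(\Omega)$ is worth one line rather than a bare appeal to a classical fact: invariance of $\Omega$ gives $[\Omega_{12},\Omega_{13}+\Omega_{23}]=0$, hence $\mathrm{CYB}(\Omega)=[\Omega_{13},\Omega_{23}]=\sum_{a,b}x_a\otimes x_b\otimes[x^a,x^b]$ for Killing-dual bases $\{x_a\}$, $\{x^a\}$ of $\g$, and pairing this tensor with $x\otimes y\otimes z$ via the Killing form gives $\kappa([x,y],z)$, the Cartan three-form, which is nonzero for $\g$ simple. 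With these remarks spelled out, the identity $\mathrm{CYB}(r-\mu\Omega)=\mu(\mu-\lambda)\,\mathrm{CYB}(\Omega)$ and the hypothesis $\mathrm{CYB}(r')=0$ indeed force $\mu\in\{0,\lambda\}$.
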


The above remark asserts that if $\delta$ is a Lie bialgebra structure on $\g$ such that $\delta_{\fb}=\partial r$, then $r$ satisfies
\begin{equation}\label{rstar}
\begin{array}{lll}
r^*=r-\mu\Omega & \text{and}&r_{21}^*=r_{21}-\mu\Omega  
\end{array}
\end{equation}
for some $\mu\in{\fb}$. Theorem \ref{TPS} then implies that, since in particular $\partial r$ should define a Lie bialgebra structure on $\g_K$, then either $\mu\in K$ or $\mu=j$ for some $j\in{\fb}$ satisfying $j^2\in K$ and $j\notin K$. The next result refines this under the condition that $\partial r$ defines a Lie bialgebra structure on $\g$.

\begin{Prp}\label{Plambda} Assume that $\delta$ is a Lie bialgebra structure on $\g$ with $\delta_{\fb}=\partial r$. Then $r=\lambda(\Ad_X\otimes\Ad_X)(r_{BD})$, where $r_{BD}$ is a Belavin--Drinfeld $r$-matrix, $X\in\GL(n,{\fb})$, and where $\lambda\in{\fb}$ satisfies one of the following conditions.
\begin{enumerate}
\item $\lambda\in F^*$,
\item $\lambda=c\sqrt{d}$ for some $c\in F^*$,
\item $\lambda=\sqrt{d'}$ for some $d'\in F^*\setminus {F^*}^2d$.
\end{enumerate} 
\end{Prp}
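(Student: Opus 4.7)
The plan is to combine the Belavin--Drinfeld normalization of $r$ with the self-adjointness constraint coming from Remark~\ref{Rlambda}. By Theorem~\ref{TBD} applied to $\delta_{\fb}$, after absorbing the equivalence into the $\Ad_X$ factor we may assume $r=\lambda(\Ad_X\otimes\Ad_X)(r_{BD})$ for some Belavin--Drinfeld $r$-matrix $r_{BD}$, some $X\in\GL(n,\fb)$, and some $\lambda\in\fb^*$; the normalization $r_0+r_0^{21}=\Omega_0$ together with the $\Ad$-invariance of $\Omega$ give $r+r^{21}=\lambda\Omega$. Applied to the descent of $\delta$ to $\g_K$, Theorem~\ref{TPS} restricts $\lambda$ to $K^*\cup K^*\sqrt{d'}$ for some $d'\in K^*\setminus{K^*}^2$; in either alternative $\lambda^2\in K^*$. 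The goal is to upgrade this to $\lambda^2\in F^*$, since the three cases of the proposition are then precisely the three cosets of $\lambda^2$ in $F^*/F^{*2}$ represented by $1$, $d$, and the remainder.

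To use that $\delta$ descends all the way to $\g$, I apply $^*$ to the identity $r+r^{21}=\lambda\Omega$. The Casimir $\Omega$ lies in $\g\otimes\g$ (since $a^*=-a$ for $a\in\g$ forces $(a\otimes b)^*=a\otimes b$), and $^*$ is $\fb$-antilinear via the extension $\bar{\cdot}$ of the Galois action to $\fb$; therefore $r^*+(r^*)^{21}=\bar\lambda\Omega$. Substituting the relation $r^*=r-\mu\Omega$ from (\ref{rstar}) gives the key identity
\[\bar\lambda=\lambda-2\mu.\]
I next show that $r^*$ is itself a non-skewsymmetric $r$-matrix, so that Lemma~\ref{LK} applies to the pair $(r,r^*)$. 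The critical point is that $^*$ is a Lie antiautomorphism on the tensor power of the universal enveloping algebra, $[u,v]^*=-[u^*,v^*]$, so pushing $^*$ through the classical Yang--Baxter operator gives $\mathrm{CYB}(r^*)=-\mathrm{CYB}(r)^*=0$, and non-skewness follows from $\bar\lambda\ne 0$. Lemma~\ref{LK} now forces $\mu\in\{0,\lambda\}$, and in both cases the displayed identity yields $\bar\lambda=\pm\lambda$, hence $\overline{\lambda^2}=\lambda^2$.

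Since $\lambda^2\in K^*$ is fixed by the nontrivial element of $\Gal(K/F)$, it must lie in $F^*$. Setting $d'':=\lambda^2$ and splitting by the image of $d''$ in $F^*/F^{*2}$, if $d''\in F^{*2}$ we obtain $\lambda\in F^*$, which is case~(1); if $d''\in F^{*2}d$, writing $d''=c^2d$ yields $\lambda=\pm c\sqrt{d}$ with $c\in F^*$, giving case~(2); otherwise $d''\in F^*\setminus F^{*2}d$, and $\lambda=\sqrt{d''}$ gives case~(3). The principal obstacle in this plan is the verification that $r^*$ remains an $r$-matrix so that Lemma~\ref{LK} is applicable: the sign in $[u,v]^*=-[u^*,v^*]$ cooperates precisely to preserve $\mathrm{CYB}=0$, and once this is in place the lemma supplies the dichotomy $\mu\in\{0,\lambda\}$ that forces $\lambda^2\in F^*$ and hence the trichotomy of the proposition.
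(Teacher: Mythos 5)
Your proof is correct and follows essentially the same route as the paper's: Theorem \ref{TPS} together with the relation \eqref{rstar}, applying $^*$ to $r+r^{21}=\lambda\Omega$ to obtain $\overline{\lambda}=\lambda-2\mu$, and Lemma \ref{LK} to force $\mu\in\{0,\lambda\}$. The differences are only organizational: you unify the two alternatives of Theorem \ref{TPS} by working with $\lambda^2$ and its square class in $F^*$ (the paper treats $\lambda\in K$ and $\lambda=\sqrt{d'}$ separately, via the splitting field), and you make explicit the verification, left implicit in the paper, that $r^*$ is again a non-skewsymmetric $r$-matrix so that Lemma \ref{LK} is applicable.
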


We shall call Lie bialgebra structures corresponding to these three cases as \emph{basic}, \emph{quadratic} and \emph{twisted}, respectively.

\begin{proof} By Theorem \ref{TPS}, we know that the statement holds either with $\lambda=\alpha+\beta\sqrt{d}$ or $\lambda\notin K$ and $\lambda^2=\alpha+\beta\sqrt{d}$,  where $\alpha,\beta\in F$. In either case,
\begin{equation}\label{r21}
r+r^{21}=\lambda\Omega. 
\end{equation}

Let us first consider the case where $\lambda=\alpha+\beta\sqrt{d}\in K$. Then \eqref{rstar} applies with $\mu\in\fb$, and adding these two equations and applying \eqref{r21} we get
\[\lambda-\overline{\lambda}=2\mu,\]
whence $\mu=\beta\sqrt{d}$. On the other hand, by Lemma \ref{LK}, either $\mu=0$, implying $\lambda=\alpha$, or $\mu=\lambda$, implying $\lambda=\beta\sqrt{d}$. Thus if \eqref{r21} holds with $\lambda\in K$, then $\lambda$ satisfies item (1) or (2).

Consider next the case where $\lambda=\sqrt{d'}$ with $d'=\alpha+\beta\sqrt{d}$. Then $\lambda$ is an element of the splitting field $K'$ of the polynomial $(X^2-\alpha)^2-\beta^2d\in F[X]$. The element of the Galois group $\Gal(K'/F)$ defined by $\sqrt{d}\mapsto-\sqrt{d}$ sends $\lambda$ to $\widehat\lambda=\sqrt{\alpha-\beta\sqrt{d}}$. As in the previous case we add the equations in \eqref{rstar} and apply \eqref{r21}, obtaining
\[\lambda-\widehat{\lambda}=2\mu,\]
and apply Lemma \ref{LK}. If $\mu=0$, then $\widehat\lambda=\lambda$, and upon squaring one gets $\beta=0$ and $d_1=\alpha$. If $\mu=\lambda$, then $\widehat\lambda=-\lambda$, then squaring again gives $\beta=0$. (However this then implies that $\alpha=0$ as well, which is impossible.) Thus if $\lambda=\sqrt{d'}$, then $d'\in F^*$, and item (3) applies.
\end{proof}

\subsection{Quadratic DJ-Lie bialgebra structures}

We shall consider at some length Lie bialgebra structures $\delta$ on $\g$ such that $\delta_{\fb}=\partial r$ with $r=\lambda (\Ad_X\otimes\Ad_X)(r_{DJ})$ with $\lambda$ satisfying item (2) above. We call any $\delta$ satisfying these conditions a \emph{quadratic DJ-Lie bialgebra}. Note that we may assume that in fact $r=\sqrt{d}r_{DJ}$, since non-zero scalar multiples give equivalent Lie bialgebra structures.

We begin by giving a construction of $r=\sqrt{d}r_{DJ}$ using a Manin triple in a convenient way. For the next lemma, we write $\g_+$ for the image of the embedding $\g\to\g_K$, $x\mapsto 1\otimes x$. Moreover we set
\[\g_-=\{A\in\g_K: \forall i: A_{ii}\in F \wedge (j<i\Rightarrow A_{ij}=0)\}.\]
Finally, we define the $F$-bilinear form $\langle,\rangle$ on $\g_K$ by 
\[\langle A+\sqrt{d}B,C+\sqrt{d}D\rangle=2n\tr(AD+BC)\]
for all $A,B,C,D\in\g_+$. Note that if $\kappa$ denotes the Killing form on $\g_K$, then $\langle A+\sqrt{d}B,C+\sqrt{d}D\rangle=\beta$, where $\kappa(A+\sqrt{d}B,C+\sqrt{d}D)=\alpha+\beta\sqrt{d}$ with $\alpha,\beta\in F$.

\begin{Lma} The triple $(\g_K, \g_+, \g_-)$ is a Manin triple with respect to the form $\langle,\rangle$. In particular, the Drinfeld double of $\g$ is isomorphic over $F$ to $\g_K$.
\end{Lma}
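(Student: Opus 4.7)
The plan is to verify in turn the three defining properties of a Manin triple: (a) the direct-sum decomposition $\g_K=\g_+\oplus\g_-$ with both summands $F$-Lie subalgebras; (b) that $\langle,\rangle$ is a symmetric, $F$-bilinear, ad-invariant, non-degenerate form on $\g_K$ viewed as an $F$-Lie algebra; and (c) that $\g_+$ and $\g_-$ are both totally isotropic with respect to $\langle,\rangle$.

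For (a), $\g_+$ is an $F$-subalgebra as the image of the natural embedding $\g\hookrightarrow\g_K$. For $\g_-$: the product of two upper-triangular matrices is upper-triangular with $(XY)_{ii}=X_{ii}Y_{ii}\in F$, so the commutator $[X,Y]$ is upper-triangular with zero diagonal, hence lies in $\sll(n,K)$ and in $\g_-$. A quick count gives $\dim_F\g_+=n^2-1$ and $\dim_F\g_-=n(n-1)+(n-1)=n^2-1$, matching $\dim_F\g_K=2(n^2-1)$. Triviality of $\g_+\cap\g_-$ follows because an anti-Hermitian ($\overline A^T=-A$) upper-triangular matrix $A$ has lower-triangular $A^T$ equal to the upper-triangular matrix $-\overline A$, so $A$ is diagonal; its diagonal entries then lie in $F$ and satisfy $\overline{A_{ii}}=-A_{ii}$, forcing them to vanish.

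For (b), the form is well-defined because every $X\in\g_K$ decomposes uniquely as $A+\sqrt d B$ with $A,B\in\g_+$ (take $A=\tfrac12(X-X^*)$, $B=\tfrac{1}{2\sqrt d}(X+X^*)$). A short calculation exploiting $\overline A=-A^T$ for $A\in\g_+$ shows $\tr(AD)\in F$ for $A,D\in\g_+$; expanding in the decomposition then gives
\[\tr(XY)=\bigl(\tr(AC)+d\tr(BD)\bigr)+\sqrt d\bigl(\tr(AD)+\tr(BC)\bigr),\]
so $\langle X,Y\rangle$ is precisely $2n$ times the $\sqrt d$-coefficient of $\tr(XY)$. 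Symmetry follows from cyclicity of trace, and $F$-invariance on $\g_K$ follows from $K$-invariance of $\tr(XY)$ on $\sll(n,K)$ composed with the $F$-linear projection onto the $\sqrt d$-part. For non-degeneracy: if $\langle A+\sqrt d B,Y\rangle=0$ for all $Y$, testing against $Y=C$ and $Y=\sqrt d C$ with $C\in\g_+$ yields $\tr(AC)=\tr(BC)=0$ for every $C\in\g_+$; since $\g_+$ is absolutely simple (its scalar extension to $\fb$ is $\sll(n,\fb)$), the restriction of the trace form to $\g_+\times\g_+$ is non-zero (witnessed by a trace-zero diagonal matrix over $F\sqrt d$) and hence non-degenerate, so $A=B=0$.

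For (c), isotropy of $\g_+$ is immediate from the defining formula ($B=D=0$ makes the right-hand side vanish). For $\g_-$, if $X,Y\in\g_-$ then $XY$ is upper-triangular with diagonal $X_{ii}Y_{ii}\in F$, so $\tr(XY)\in F$ has zero $\sqrt d$-component and $\langle X,Y\rangle=0$. The ``in particular'' conclusion is then immediate, since the ambient Lie algebra of a Manin triple is by construction the Drinfeld double of the Lie bialgebra induced on either isotropic half. The main obstacle, though modest, is the non-degeneracy step, which ultimately rests on absolute simplicity of $\g_+=\su(n,F,d)$; everything else is routine verification.
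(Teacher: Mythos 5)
Your proof is correct, and its overall skeleton (direct sum, isotropy, non-degeneracy) is the same as the paper's; the one step where you take a genuinely different route is non-degeneracy. The paper argues on $\g_K$ as a $K$-Lie algebra: by Cartan's criterion there is, for any nonzero $X$, some $Y$ with $\kappa(X,Y)\neq 0$, and since $\langle\,,\rangle$ is the $\sqrt{d}$-coefficient of $\kappa$, replacing $Y$ by $\sqrt{d}\,Y$ if necessary produces $\langle X,Y\rangle\neq 0$ --- a two-line argument needing nothing about $\g_+$ itself. You instead use the explicit decomposition $X=A+\sqrt{d}B$ with $A,B\in\g_+$, test against $C$ and $\sqrt{d}C$, and reduce to non-degeneracy of the $F$-valued trace form on $\g_+$, which you get from simplicity of $\su(n,F,d)$ (a nonzero invariant symmetric form on a simple Lie algebra has zero radical); this is slightly longer but more self-contained and makes the duality between $\g_+$ and $\g_-$ more transparent, which is useful later when the paper writes down the dual bases $B_+$ and $B_-$. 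Your isotropy argument for $\g_-$ (diagonal of $XY$ lies in $F$) is essentially the paper's, which uses $\tr(X^2)\in F$ for $X\in\g_+\cup\g_-$ plus polarization; your verification of well-definedness, symmetry and ad-invariance of the form, and of the trivial intersection $\g_+\cap\g_-=0$, spells out what the paper declares ``clear.''
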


We recall that a triple of Lie algebras $(\g',\g_+',\g_-')$ is a \emph{Manin triple} with respect to a non-degenerate bilinear form $b$ on $\g'$ if $\g_+'$ and $\g_-'$ are $b$-isotropic Lie subalgebras of $\g'$ with $\g'=\g_+'\oplus\g_-'$ as a vector space.

\begin{proof} It is clear that $\g_+$ and $\g_-$ are $F$-Lie subalgebras of $\g_K$ having trivial intersection, and counting dimensions one verifies that $\g_K=\g_+\oplus\g_-$ as a vector space. It remains to be shown that $\langle,\rangle$ is non-degenerate and that $\g_+$ and $\g_-$ are isotropic. Let $X\in\g_K$. By Cartan's criterion there exists $Y\in \g_K$ such that $\kappa(X,Y)\neq 0$, and then either $\langle X,Y\rangle\neq 0$ or $\langle X,\sqrt{d} Y\rangle\neq 0$, whence the form is non-degenerate. On the other hand if $X\in \g_+\cup \g_-$, then each diagonal entry of $X^2$, and hence the trace of $X^2$, is in $F$, whence $\langle X,X\rangle= 0$, showing that $\g_+$ and $\g_-$ are isotropic.
\end{proof}

Given a Lie bialgebra structure $\delta$ on a Lie algebra $\g_+$, the Drinfeld double \linebreak $D=\mathcal D(\g_+,\delta)$ is, as a vector space, equal to $g_+\oplus\g_+^*$. The triple $(D,\g_+,\g_+^*)$ is a Manin triple, where the bilinear form is the usual duality pairing, extended to $D$ by being isotropic on $\g_+$ and $\g_+^*$. Conversely, given a Manin triple $(\g',\g_+',\g_-')$, one can construct a Lie bialgebra structure on $\g_+$. This gives a well-known one-to-one correspondence between Manin triples and Lie bialgebra structures.

The above lemma provides us with a Lie bialgebra structure on $\g_+$, as follows. Fix the Cartan subalgebra $\h_K\subset\g_K$ of diagonal matrices with orthogonal basis $\{h_i:1\leq i<n\}$, where
\[h_i=\sum_{j=1}^i E_{jj}-iE_{(i+1)(i+1)}\]
 and let $\Delta$ be the corresponding root system. Let $\{\alpha_1,\ldots,\alpha_m\}$ be the set of positive roots and write $e_{\alpha_j}$ and $e_{-\alpha_j}$, respectively, for the upper triangular and lower triangular $K$-basis element of $\g_K$ corresponding to each $\alpha_j$. Then an $F$-basis for $\g_+$ is given by 
\[B_+=\left((\sqrt{d}h_i)_{i=1}^{n-1}, (e_{\alpha_j}-e_{-\alpha_j})_{j=1}^m,(\sqrt{d}(e_{\alpha_j}+e_{-\alpha_j}))_{j=1}^m\right),\]
and an $F$-basis of $\g_-$, dual to the above with respect to $\langle,\rangle$, is given by
\[B_-=\frac{1}{2n}\left((\frac{1}{i+i^2}h_i)_{i=1}^{n-1}, (-\sqrt{d}e_{\alpha_j})_{j=1}^m,(e_{\alpha_j})_{j=1}^m\right).\]
Setting $r=\sum_{e\in B_+} e\otimes e'$, where for each $e\in B_+$, $e'\in B_-$ is the unique vector not orthogonal to $e$, we find that $r=\sqrt{d}r_{DJ}$.

The following result is well-known and describes the centralizer of $r$.

\begin{Prp} The centralizer $C(r_{DJ})$ of $r_{DJ}$, i.e.\ the set of all $M\in\GL(n,\overline{F})$ such that $(\Ad_M\otimes\Ad_M)(r_{DJ})=r_{DJ}$, consists of all diagonal matrices. 
\end{Prp}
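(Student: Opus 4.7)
The statement has two directions. The easy direction is a direct computation: if $M=\diag(t_1,\ldots,t_n)$ is diagonal, then $\Ad_M$ fixes each Cartan generator $h_i$ and acts on every root vector $e_\alpha$ by the character $\alpha(M)$, so $\Ad_M(e_\alpha)\otimes\Ad_M(e_{-\alpha})=\alpha(M)(-\alpha)(M)\,e_\alpha\otimes e_{-\alpha}=e_\alpha\otimes e_{-\alpha}$, while the Cartan part $\tfrac12\sum_i h_i\otimes h_i$ is fixed slot by slot; hence every invertible diagonal matrix centralizes $r_{DJ}$.

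For the converse, my plan is to extract from $r_{DJ}$ two intrinsic subspaces of $\sll(n,\fb)$ that $\Ad_M$ is forced to stabilize, and to identify them with opposite Borel subalgebras. Given $r\in\sll(n,\fb)\otimes\sll(n,\fb)$, define $V_1(r)$ (respectively $V_2(r)$) to be the smallest subspace $V\subseteq\sll(n,\fb)$ such that $r\in V\otimes\sll(n,\fb)$ (respectively $r\in\sll(n,\fb)\otimes V$). These left/right supports are intrinsic to $r$ and transform equivariantly, i.e.\ $V_i((\Ad_M\otimes\Ad_M)(r))=\Ad_M(V_i(r))$. Reading off the Chevalley expression for $r_{DJ}$, the vectors appearing nontrivially in the first slot are $h_1,\ldots,h_{n-1}$ together with $e_\alpha$ for $\alpha\in\Delta^+$, each paired with linearly independent partners in the second slot; this forces $V_1(r_{DJ})=\g^+:=\h\oplus\bigoplus_{\alpha\in\Delta^+}\fb\,e_\alpha$, the upper-triangular Borel subalgebra of $\sll(n,\fb)$, and symmetrically $V_2(r_{DJ})=\g^-$, the opposite (lower-triangular) Borel.

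Assuming $(\Ad_M\otimes\Ad_M)(r_{DJ})=r_{DJ}$, the equivariance then forces $\Ad_M(\g^+)=\g^+$ and $\Ad_M(\g^-)=\g^-$. The plan is to conclude by invoking the standard fact that Borel subalgebras of $\sll(n,\fb)$ are self-normalizing: the normalizer of $\g^\pm$ in $\GL(n,\fb)$ is precisely the corresponding Borel subgroup $B^\pm$ of upper (respectively lower) triangular invertible matrices. Thus $M\in B^+\cap B^-$, which is exactly the torus of invertible diagonal matrices, and the proposition follows. The only step of genuine content is the identification $V_1(r_{DJ})=\g^+$ and $V_2(r_{DJ})=\g^-$: one must verify that no cancellation in the Cartan summand $\tfrac12\sum_i h_i\otimes h_i$ could shrink either leg, but this is immediate from linear independence of the Chevalley basis, after which the argument is purely formal.
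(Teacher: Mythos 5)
Your argument is correct. The inclusion of the diagonal torus in $C(r_{DJ})$ is the routine computation you give, and your converse is sound: the left and right supports $V_1,V_2$ are well defined (the intersection of two admissible subspaces is admissible), they transform equivariantly under $\Ad_M\otimes\Ad_M$, and since in $r_{DJ}=\tfrac12\sum_i h_i\otimes h_i+\sum_{\alpha\in\Delta^+}e_\alpha\otimes e_{-\alpha}$ the second-leg vectors $\{h_i\}\cup\{e_{-\alpha}\}$ are linearly independent, $V_1(r_{DJ})$ is exactly $\h\oplus\bigoplus_{\alpha\in\Delta^+}\fb e_\alpha$ and symmetrically for $V_2$; the conclusion then follows because the stabilizer in $\GL(n,\fb)$ of each Borel subalgebra under conjugation is the corresponding Borel subgroup, and $B^+\cap B^-$ is the diagonal torus. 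Note, however, that the paper does not prove this proposition at all (it is quoted as well known); the closest argument in the text is the proof sketch of Lemma \ref{lcartan}, borrowed from \cite{4046}, which recovers the two Borel subalgebras not from supports but from the generalized eigenspaces of the operator $\Phi(r_{BD})\in\End(\g)$ obtained by contracting the first leg with the Killing form, and then concludes diagonality in the same way you do. Your support argument is more elementary and entirely adequate for $r_{DJ}$, but it is genuinely tied to the Drinfeld--Jimbo case: for a general Belavin--Drinfeld $r$-matrix the terms $e_\alpha\wedge e_{-\tau^k(\alpha)}$ put negative root vectors into the first leg, so $V_1(r_{BD})$ is strictly larger than a Borel, and it is precisely to handle that situation (and Cartan-part perturbations $r_0'$) that the spectral method of Lemma \ref{lcartan} is used in the paper.
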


As a consequence, the same holds for $r=\sqrt{d}r_{DJ}$.

\section{Belavin--Drinfeld Cohomology}
We are thus interested in classifying, up to gauge equivalence over $F$, those Lie bialgebra structures $\delta$ on $\g$ that, upon extending scalars to $\overline{F}$, become gauge equivalent to $\sqrt{d}r_{DJ}$. Our tool will be a descent-type argument using certain cohomology sets which we shall introduce. For notational convenience, we continue to write $r$ for $\sqrt{d}r_{DJ}$. The following result implies that we need only extend scalars to $K$ for any two such bialgebra structures to become isomorphic.

\begin{Lma} Let $X\in\GL(n,\overline{F})$, and assume that $(\Ad_X\otimes\Ad_X)(r)$ defines a Lie bialgebra structure on $\g_K$. Then 
\[(\Ad_X\otimes\Ad_X)(r)=(\Ad_Y\otimes\Ad_Y)(r)\]
for some  $Y\in\GL(n,K)$.
\end{Lma}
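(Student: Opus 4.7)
The plan is a Galois descent argument based on Hilbert's Theorem 90. First I would set $s=(\Ad_X\otimes\Ad_X)(r)$. By hypothesis $s\in\g_K\otimes\g_K$, so $\sigma(s)=s$ for every $\sigma\in\Gal(\fb/K)$. Since $r=\sqrt{d}\,r_{DJ}$ already has entries in $K$, it is likewise fixed by $\Gal(\fb/K)$. Applying $\sigma$ to the identity $(\Ad_X\otimes\Ad_X)(r)=s$ would then give $(\Ad_{\sigma(X)}\otimes\Ad_{\sigma(X)})(r)=s=(\Ad_X\otimes\Ad_X)(r)$, which says that $c_\sigma:=X^{-1}\sigma(X)$ centralizes $r$. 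By the preceding proposition (and the observation that a non-zero scalar does not change the centralizer), $C(r)=C(r_{DJ})$ is the group of invertible diagonal matrices in $\GL(n,\fb)$.

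Next I would verify the standard cocycle identity $c_{\sigma\tau}=c_\sigma\,\sigma(c_\tau)$, so that $\sigma\mapsto c_\sigma$ is a continuous $1$-cocycle of $\Gal(\fb/K)$ valued in the torus of invertible diagonal matrices, which as a Galois module is $(\fb^{\times})^{n}$. Continuity is automatic because the finitely many entries of $X$ lie in some finite Galois extension of $K$, so the cocycle factors through a finite quotient. Hilbert's Theorem 90, applied coordinatewise, gives $H^1(\Gal(\fb/K),(\fb^{\times})^n)=0$, so the cocycle is a coboundary: there exists a diagonal $H\in\GL(n,\fb)$ with $c_\sigma=H^{-1}\sigma(H)$ for all $\sigma$.

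The proof would then be finished by setting $Y=XH^{-1}$. Indeed, $\sigma(Y)=\sigma(X)\sigma(H)^{-1}=X c_\sigma\sigma(H)^{-1}=XH^{-1}=Y$ for every $\sigma\in\Gal(\fb/K)$, so $Y\in\GL(n,K)$. Since $H^{-1}$ is diagonal it lies in $C(r)$, whence $(\Ad_Y\otimes\Ad_Y)(r)=(\Ad_X\otimes\Ad_X)\bigl((\Ad_{H^{-1}}\otimes\Ad_{H^{-1}})(r)\bigr)=(\Ad_X\otimes\Ad_X)(r)$, as required.

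The only substantive point is the identification of the cocycle group: one must know that $C(r)$ is (the $\fb$-points of) a split algebraic torus over $K$, because this is what makes Hilbert 90 produce a splitting. The preceding proposition supplies exactly this fact, reducing the statement to a standard descent. Everything else — the cocycle relation, continuity, and the final substitution — is routine.
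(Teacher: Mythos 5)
Your opening step contains a genuine gap. You assert that, because $(\Ad_X\otimes\Ad_X)(r)$ defines a Lie bialgebra structure on $\g_K$, the element $s=(\Ad_X\otimes\Ad_X)(r)$ lies in $\g_K\otimes\g_K$ and is therefore fixed by every $\sigma\in\Gal(\fb/K)$. That is not what the hypothesis gives you: ``defines a Lie bialgebra structure on $\g_K$'' means that the coboundary $\partial s$ maps $\g_K$ into $\g_K\otimes\g_K$, while $s$ itself is only an element of $\g_{\fb}\otimes\g_{\fb}$. Since $\Omega$ is invariant and is annihilated by $\partial$, the hypothesis yields (by the same reasoning as in Remark \ref{Rlambda}, carried out over $K$) only that $(\sigma\otimes\sigma)(s)=s+\alpha\Omega$ for some $\alpha\in\fb$, possibly non-zero. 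Without ruling out the $\Omega$-term you cannot conclude that $X^{-1}\sigma(X)$ centralizes $r$; you only get that it moves $r$ by $\alpha\Omega$. The missing step is exactly the ``standard computation'' in the paper's proof: since $r_{DJ}+r_{DJ}^{21}=\Omega$ and $\Ad_Y\otimes\Ad_Y$ fixes $\Omega$, one has $s+s^{21}=\sqrt{d}\,\Omega$; applying $\sigma\otimes\sigma$ (which fixes $\sqrt{d}\in K$ and $\Omega$, and commutes with the flip) gives $s+s^{21}+2\alpha\Omega=\sqrt{d}\,\Omega$, whence $\alpha=0$. Only then does $\sigma(s)=s$, and your cocycle $c_\sigma=X^{-1}\sigma(X)$ indeed lands in $C(r)=C(r_{DJ})$, the diagonal matrices.

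With that repair, the remainder of your argument is correct and is in substance the paper's proof: the paper passes from ``$X^{-1}\sigma(X)$ diagonal'' to ``$XD\in\GL(n,K)$ for some diagonal $D$'' by citing an argument of Kadets--Karolinsky--Pop--Stolin, which is precisely the Hilbert 90 descent for the split diagonal torus that you write out explicitly (cocycle identity, reduction to a finite Galois quotient, coordinatewise $H^1(\Gal,\fb^{\times})=0$, then $Y=XH^{-1}$ with $H^{-1}\in C(r)$ so the $r$-matrix is unchanged). Making that step explicit is a worthwhile addition, but it is not a different method; the only substantive omission is the $\alpha=0$ argument above.
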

 
The condition that $(\Ad_X\otimes\Ad_X)(r)$ defines a Lie bialgebra structure on $\g_K$ is clearly necessary for it to define a Lie bialgebra structure on $\g\simeq \g_+\subset \g_K$.

\begin{proof} If $(\Ad_X\otimes\Ad_X)(r)$ defines a Lie bialgebra structure on $\g_K$, then for any $\sigma\in\Gal(\overline{F}/K)$,
\[(\sigma\otimes\sigma)((\Ad_X\otimes\Ad_X)(r))=(\Ad_X\otimes\Ad_X)(r)+\alpha\Omega\]
for some $\alpha\in{\fb}$. A standard computation then gives $\alpha=0$.
Since $(\sigma\otimes\sigma)(r)=r$, the left hand side equals $(\Ad_{\sigma(X)}\otimes\Ad_{\sigma(X)})(r)$, and altogether we get $X^{-1}\sigma(X)\in C(r)$. Since $r=\sqrt{d}r_{DJ}$ and $\sigma$ fixes $\sqrt{d}$, it follows that $X^{-1}\sigma(X)\in C(r_{DJ})$, whence $X^{-1}\sigma(X)$ is diagonal. By an argument similar to that of Lemma 2 of \cite{4046} we conclude that $XD\in\GL(n,K)$ for some diagonal $D\in\GL(n,\overline{F})$. The claim follows as the $r$-matrices $(\Ad_X\otimes\Ad_X)(r)$ and $(\Ad_{XD}\otimes\Ad_{XD})(r)$ are equal.
\end{proof}

Our next concern is thus to determine precisely which $X\in\GL(n,K)$ give rise to $r$-matrices which define Lie bialgebra structures on $\g$. This is the content of the following result.

\begin{Thm}\label{T1} Let $X\in\GL(n,K)$. Then $(\Ad_X\otimes\Ad_X)(r)$ defines a Lie bialgebra structure on $\g$ if and only if $X^*X=D$ for some diagonal $D\in \GL(n,F)$.
\end{Thm}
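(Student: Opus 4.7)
The plan is to apply the criterion of Remark~\ref{Rlambda} to the element $s=(\Ad_X\otimes\Ad_X)(r)$. Since $X\in\GL(n,K)$ and $r$ has coefficients in $K$, one has $\sigma(s)=s$ for every $\sigma\in\Gal(\fb/K)$, so the Galois condition in Remark~\ref{Rlambda} is automatic. Thus it suffices to show that $s-s^*\in\fb\Omega$ if and only if $X^*X$ is diagonal and lies in $\GL(n,F)$.

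The first step is to reformulate $s-s^*\in\fb\Omega$ as a condition on $D:=X^*X$. Since $r_{DJ}$ has integer entries with $e_\alpha^T=e_{-\alpha}$ and $h_i^T=h_i$, one computes $r_{DJ}^*=r_{DJ}^{21}=\Omega-r_{DJ}$, whence
\[r^*=(\sqrt{d}\,r_{DJ})^*=-\sqrt{d}\,r_{DJ}^{21}=r-\sqrt{d}\,\Omega.\]
Using the identity $(\Ad_X a)^*=\Ad_{(X^*)^{-1}}(a^*)$ yields $s^*=(\Ad_{(X^*)^{-1}}\otimes\Ad_{(X^*)^{-1}})(r^*)$. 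Applying $\Ad_{X^*}\otimes\Ad_{X^*}$ to the difference $s-s^*$ and using the $\GL(n,\fb)$-invariance of the Casimir $\Omega$ together with the formula for $r^*$ above, the condition $s-s^*\in\fb\Omega$ becomes
\[(\Ad_D\otimes\Ad_D)(r)-r\in\fb\Omega.\]

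For the forward direction, write $(\Ad_D\otimes\Ad_D)(r)=r+\mu\Omega$ with $\mu\in\fb$. Applying the flip $(\cdot)^{21}$ to both sides, adding, and using $r+r^{21}=\sqrt{d}\,\Omega$ together with the $\Ad$-invariance of $\Omega$, gives $2\mu\Omega=0$, so $\mu=0$. Hence $D\in C(r)=C(r_{DJ})$, which by the proposition preceding the theorem forces $D$ to be diagonal. Since $D=X^*X$ is Hermitian, its diagonal entries are fixed under the conjugation $\overline{(\cdot)}$ and therefore lie in $F$, giving $D\in\GL(n,F)$. The converse is immediate: if $D$ is diagonal with entries in $F$, then $D\in C(r_{DJ})$ yields $(\Ad_D\otimes\Ad_D)(r)=r$, and reversing the manipulation above produces $s-s^*=\sqrt{d}\,\Omega\in\fb\Omega$. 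The only genuine obstacle is the swap-and-sum argument forcing $\mu=0$, which is clean and requires nothing beyond the defining identity $r+r^{21}=\sqrt{d}\,\Omega$; Lemma~\ref{LK} could equivalently be invoked to restrict $\mu\in\{0,\sqrt{d}\}$, after which the same symmetry eliminates the non-zero value.
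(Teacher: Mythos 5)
Your proof is correct and takes essentially the same approach as the paper: both use Remark \ref{Rlambda} (with the Galois condition automatic over $K$), the identity $r_{DJ}^*=r_{DJ}^{21}=\Omega-r_{DJ}$ to reduce to $(\Ad_{X^*X}\otimes\Ad_{X^*X})(r_{DJ})=r_{DJ}$ up to a multiple of $\Omega$, the flip-and-add symmetry with $r+r^{21}=\sqrt{d}\,\Omega$ to kill that multiple, and the description of $C(r_{DJ})$ as the diagonal matrices. The only cosmetic difference is that you deduce $D\in\GL(n,F)$ from the Hermitian symmetry of $X^*X$, whereas the paper uses $(X^*X)_{ii}=\sum_k N(X_{ki})$.
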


\begin{proof} By Remark \ref{Rlambda}, an $r$-matrix $s\in\g_K\otimes\g_K$ defines a Lie bialgebra structure on $\g$ precisely when
\[s-s^*=\alpha\Omega\]
for some $\alpha\in K$. Thereby
\[s_{21}-s_{21}^*=\alpha\Omega\]
holds as well. (We write the superscript 21 as a subscript to enhance legibility.) If now $s=(\Ad_X\otimes\Ad_X)(r)$ for some $X\in \GL(n,K)$, then using $r_{DJ}+r_{DJ}^{21}=\Omega$, we find
\[s-s^*+s_{21}-s_{21}^*=2\sqrt{d}\Omega,\]
which combined with the above gives $\alpha=\sqrt{d}$. Now since $r_{DJ}^*=r_{DJ}^{21}=\Omega-r_{DJ}$, and moreover $\Ad_Y\otimes\Ad_Y$ fixes $\Omega$ for any $Y\in\GL(n,K)$, we get
\[\begin{array}{crcl}
&s&=&\sqrt{d}\Omega+s^*\\
\Longleftrightarrow & (X\otimes X)r_{DJ}(X^{-1}\otimes X^{-1})&=&({X^*}^{-1}\otimes {X^*}^{-1})r_{DJ}(X^{-1}\otimes X^{-1})\\
\Longleftrightarrow & (\Ad_{X^*X}\otimes\Ad_{X^*X})(r_{DJ})&=&r_{DJ},\\
\end{array}\]
which is equivalent to $X^*X=D$ for some diagonal matrix $D\in\GL(n,K)$, since the centralizer $C(r_{DJ})$ of $r_{DJ}$ contains no non-diagonal elements. Any such $D$ satisfies $D\in\GL(n,F)$, since for each $1\leq i\leq n$
\[(X^*X)_{ii}=\sum_k N(X_{ki}).\]
This concludes the proof.
\end{proof}

This in fact re-proves that $r$ itself induces a Lie bialgebra structure on $\g$. The above result paves the road for introducing the following tool for the problem of classifying Lie bialgebra structures on $\g$.

\begin{Def} We say that $X\in \GL(n,K)$ is a \emph{diagonal type Belavin--Drinfeld cocycle}, or \emph{cocycle} for short, if $X^*X=D_X$ for some diagonal $D_X\in \GL(n,F)$. The set of cocycles is denoted by $Z_d(r, F, d)$. Two cocycles $X$ and $Y$ are \emph{cohomologous} if $Y=QXD$ for some  $Q \in\U(n,d)(F)$ and diagonal $D\in \GL(n,K)$. The set of cohomology classes is denoted by $H_d^1(r_{DJ},F,d)$. If $c\in H_d^1(r_{DJ},F,d)$ is a cohomology class and $D\in \GL(n,F)$ is diagonal and satisfies $D=D_X$ for some $X\in c$, we say that $D$ \emph{represents} $c$.
\end{Def}

The assignment $X\mapsto \partial((\Ad_X\otimes\Ad_X)(r))$ thus defines a map $\mathcal F$ from $Z_d(r, F, d)$ to the class of all quadratic DJ-Lie bialgebra structures on $\g$. From the definition of equivalence and the description of the centralizer of $r$ it follows that two cocycles $X,Y\in Z_d(r, F, d)$ are cohomologous if and only if $\mathcal F(X)$ and $\mathcal F(Y)$ are gauge equivalent. (Note that if $\mathcal F(X)$ and $\mathcal F(Y)$ are gauge equivalent, with $Y=QXD$ as above, then necessarily $D\in\GL(n,K)$.) This proves the following.

\begin{Prp} There is a one-to-one correspondence between $H_d^1(r_{DJ},F,d)$ and gauge equivalence classes of quadratic DJ-Lie bialgebra structures on $\g$.
\end{Prp}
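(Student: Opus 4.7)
The plan is to verify that the assignment $\mathcal F:X\mapsto \partial((\Ad_X\otimes\Ad_X)(r))$ defines a surjection from $Z_d(r,F,d)$ onto the class of quadratic DJ-Lie bialgebra structures on $\g$, and that it descends to an injection on $H_d^1(r_{DJ},F,d)$. All the heavy lifting has already been done by Theorem~\ref{T1}, the preceding descent lemma, and the identification of the centralizer of $r_{DJ}$ as consisting of diagonal matrices; the proposition itself just packages these results.

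For surjectivity, I would take any quadratic DJ-Lie bialgebra structure $\delta$ on $\g$. By the defining conditions and Proposition~\ref{Plambda}(2), we may write $\delta_{\fb}=\partial((\Ad_X\otimes\Ad_X)(r))$ for some $X\in\GL(n,\overline F)$, where the scalar $\sqrt d$ has been absorbed into $r=\sqrt d r_{DJ}$. Since $\delta$ in particular restricts to a Lie bialgebra structure on $\g_K$, the lemma immediately preceding Theorem~\ref{T1} produces $Y\in\GL(n,K)$ with $(\Ad_Y\otimes\Ad_Y)(r)=(\Ad_X\otimes\Ad_X)(r)$. Then Theorem~\ref{T1} forces $Y^*Y$ to be a diagonal element of $\GL(n,F)$, so $Y\in Z_d(r,F,d)$ and $\mathcal F(Y)=\delta$.

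For the equivalence between cohomology and gauge equivalence, suppose first that $Y=QXD$ with $Q\in\U(n,d)(F)$ and diagonal $D\in\GL(n,K)$. Since $D\in C(r_{DJ})=C(r)$, one has
\[(\Ad_Y\otimes\Ad_Y)(r)=(\Ad_Q\otimes\Ad_Q)(\Ad_X\otimes\Ad_X)(r),\]
exhibiting $\mathcal F(Y)$ as the gauge transform of $\mathcal F(X)$ by $Q\in G(F)=\U(n,d)(F)$. Conversely, if $\mathcal F(X)$ and $\mathcal F(Y)$ are gauge equivalent via some $Q\in\U(n,d)(F)$, then
\[(\Ad_{(QX)^{-1}Y}\otimes\Ad_{(QX)^{-1}Y})(r)=r,\]
so $D:=(QX)^{-1}Y$ lies in $C(r)$ and is therefore diagonal. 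Because $X$, $Y$ and $Q$ are all $K$-rational, so is $D$, and $Y=QXD$ gives the cohomology relation.

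The only point requiring any genuine care is ensuring that the matrix $D$ arising in the converse direction actually lives in $\GL(n,K)$ rather than in some larger extension, but this is immediate from the $K$-rationality of the other three factors. Everything else is routine bookkeeping on top of Theorem~\ref{T1}, the descent lemma, and the centralizer proposition, so no serious obstacle should arise.
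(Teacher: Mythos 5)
Your proof is correct and follows essentially the same route as the paper: well-definedness and surjectivity come from the descent lemma and Theorem \ref{T1}, and the dictionary between cohomology classes and gauge equivalence classes comes from the diagonal centralizer of $r_{DJ}$ together with the $K$-rationality of $D=(QX)^{-1}Y$. The only step you (like the paper) leave implicit is that gauge equivalence a priori gives only $\partial s_1=\partial s_2$, hence $s_1-s_2\in\overline{F}\,\Omega$ for the transformed $r$-matrices, and one should invoke Lemma \ref{LK} (with $s_i+s_i^{21}=\sqrt{d}\,\Omega$) to conclude that the difference vanishes before asserting $(\Ad_{(QX)^{-1}Y}\otimes\Ad_{(QX)^{-1}Y})(r)=r$.
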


The cohomology condition can be further simplified as follows.

\begin{Lma}\label{L: iso} Two cocycles $X$ and $Y$ are cohomologous if and only if $D_X=\overline{D}DD_Y$ for some diagonal $D\in\GL(n,K)$. In particular, each diagonal $D\in GL(n,F)$ represents at most one cohomology class.
\end{Lma}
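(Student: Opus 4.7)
The plan is to establish each direction of the biconditional by direct computation, exploiting two simple facts: for a diagonal matrix $D$ one has $D^*=\overline{D}$, and any two diagonal matrices commute. Once the biconditional is in place, the uniqueness clause will fall out by taking the trivial choice $D=I$ in the new criterion.

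For the forward direction, I would assume $Y=QXE$ with $Q\in\U(n,d)(F)$ and $E\in\GL(n,K)$ diagonal, and simply compute
\[D_Y=Y^*Y=E^*X^*Q^*QXE=\overline{E}\,(X^*X)\,E=\overline{E}\,D_X\,E=D_X(\overline{E}E),\]
the last equality using that the three diagonal matrices commute. Inverting this gives $D_X=D_Y(\overline{E}E)^{-1}=\overline{E^{-1}}\,E^{-1}\,D_Y$, so $D:=E^{-1}$ satisfies the desired identity.

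For the reverse direction, given $D_X=\overline{D}D\,D_Y$ with $D\in\GL(n,K)$ diagonal, I would set $E:=D^{-1}$ and propose $Q:=YDX^{-1}$, so that tautologically $Y=QXE$. The only thing left to check is $Q^*Q=I$. Writing $X^*=D_XX^{-1}$ and expanding,
\[Q^*Q=(X^*)^{-1}\overline{D}\,Y^*Y\,DX^{-1}=(X^*)^{-1}\,\overline{D}\,D_Y\,D\,X^{-1}.\]
Commutativity of diagonals rewrites $\overline{D}D_YD$ as $D_Y(\overline{D}D)$, and the hypothesis $\overline{D}D=D_XD_Y^{-1}$ collapses this to $D_Y D_X D_Y^{-1}=D_X$; the surrounding factors then reduce via $(X^*)^{-1}D_XX^{-1}=(X^*)^{-1}X^*XX^{-1}=I$. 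Hence $Q\in\U(n,d)(F)$, and $X$ and $Y$ are cohomologous.

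The ``in particular'' clause is then immediate: if $D_X=D_Y$, the identity $D_X=\overline{D}D\,D_Y$ holds with $D=I$, so the reverse direction produces some $Q\in\U(n,d)(F)$ with $Y=QX$. The only real obstacle in this argument is careful bookkeeping of stars, inverses, and the commutation of diagonals — there is no conceptual difficulty, and no appeal to the ambient Lie-bialgebra structure is needed beyond what was already absorbed into the definition of a cocycle via Theorem~\ref{T1}.
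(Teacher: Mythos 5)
Your proof is correct and follows essentially the same route as the paper: the forward direction by computing $Y^*Y$ directly, and the converse by exhibiting $Q=YD^{\pm 1}X^{-1}$ and checking $Q^*Q=1$ using $X^*=D_XX^{-1}$ and commutativity of diagonal matrices. Your version is merely more explicit about inverting $\overline{E}E$ to match the stated orientation of the relation and about deducing the ``in particular'' clause, which the paper leaves implicit.
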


\begin{proof}
If two cocycles $X$ and $Y$ are cohomologous, say with $Y=QXD$ with $Q$ and $D$ as above, then
\[D_Y=Y^*Y=D^*X^*Q^*QXD=\overline{D}DD_X.\]
(This is in fact equivalent to $(D_X)_{ii}$ belonging to the same coset as $(D_Y)_{ii}$ in $F^*/N(K^*)$ for each $1\leq i\leq n$.) Conversely, if $D_Y=\overline{D}DD_X$, then $Q=YD^{-1}X^{-1}$ satisfies $Q^*Q=1$.
\end{proof}

Note that in general, not every diagonal matrix $D_0$ represents a cohomology class. For example if $F=\mathbb R$ and $d=-1$ (whence $\g$ is the ordinary $\su_n$-algebra), then every diagonal element of $X^*X$ is a sum of Euclidean norms, whence no diagonal matrix $D_0$ with negative entries satisfies $D_0=D_X$.

On the other hand, if the field $F$ is such that we can determine the class $\mathcal C(F)$ of all $D_0$ that represent a cohomology class, then, loosely speaking, we obtain a classification upon factoring $\mathcal C(F)$ by copies of $N(K^*)$ in a suitable fashion. The main task is thus to determine, for each $D_0$, whether $D_0=D_X$ for some cocycle $X$. We will fulfill this task over a large class of fields to be specified later. The main step is a proving a sufficient condition on $D_0$ over any field of characteristic zero, in terms of quaternion algebras.

\section{Quaternion Algebras}

Let $a,b\in F^*$. The \emph{quaternion algebra} $(a,b)_F$ is the four-dimensional unital associative $F$-algebra with basis $\{1,i,j,ij\}$ and multiplication given by $1$ being the unity and by
\[\begin{array}{llll}
   i^2=a, & j^2=b, & \text{and} & ji=-ij.
  \end{array}
\]
(As before, we assume that $\ch F=0$. The definition, however, works for any field of characteristic different from 2.) The prototypical example is Hamilton's real division quaternion algebra $(-1,-1)_\mathbb R$ discovered in 1843. Quaternion algebras over arbitrary fields have been extensively studied since, and we shall only recall those results which will be needed for our purposes. A thorough account can be found in e.g.\ \cite{GSz}. The algebra $A=(a,b)_F$ is equipped with a multiplicative quadratic form $Q$, known as the \emph{norm form} of $A$, given in the basis above by
\[Q(x1+yi+zj+wij)=(x^2-ay^2)-b(z^2-aw^2).\]
This is indeed a $2$-Pfister form (see Section 5.2), and there is in fact a one-to-one correspondence between quaternion algebras and $2$-Pfister forms over $F$.

The linear span of $\{1,i\}$ is a commutative subalgebra of $(a,b)_F$. If $a\notin F^{*2}$, then this subalgebra is a quadratic field extension of $F$ isomorphic to $F(\sqrt{a})$. The restriction of the norm form to this field extension is the usual field norm. In general, given $a\in F^*\setminus F^{*2}$, a quaternion algebra $A$ over $F$ contains $F(\sqrt{a})$ as a subalgebra if and only if $A\simeq(a,b)_F$ for some $b\in F^*$. 

Every quaternion algebra $A$ is central simple (i.e.\ the centre of $A$ is $F1$ and $A$ has no proper non-trivial two-sided ideals). Thus $A$ defines an element $[A]$ in the \emph{Brauer group} $\Br(F)$, whose elements are all Brauer equivalence classes of central simple $F$-algebras, with multiplication induced by the tensor product over $F$. Recall that any central simple $F$-algebra $A$ becomes isomorphic to the matrix algebra $M_n(\overline{F})$ for some $n=n(A)\in\mathbb N$ upon extending scalars to an algebraic closure. Here, two central simple algebras $A$ and $B$ are called \emph{Brauer equivalent} if $A\otimes_F M_m(F)\simeq A\otimes_F M_{m'}(F)$ for some $m,m'\in\mathbb N$.
\subsection{Belavin--Drinfeld Cohomology and Quaternion Algebras}
Our first use of quaternion algebras will be to elucidate the structure of the cohomology introduced above. For each $n>0$, let $\mathcal Q^n(F,d)$ be the class of all $n$-tuples of quaternion algebras over $F$ that contain a subalgebra isomorphic to $K=F(\sqrt{d})$. Consider the map $\mathcal G: Z_d(r_{DJ},F,d)\to \mathcal Q(F,d)^n$ defined by mapping the cocycle $X$ to the $n$-tuple $((d,d_1),\ldots,(d,d_n))$, where $\diag(d_1,\ldots,d_n)=D_X$. The following result shows that cohomology classes correspond to isomorphism classes of quaternion algebras.

\begin{Prp}\label{T: q} Let $X,Y\in Z_d(r_{DJ},F,d)$. Then $X$ and $Y$ are cohomologous if and only if $\mathcal G(X)\simeq\mathcal G(Y)$.
\end{Prp}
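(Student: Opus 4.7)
The plan is to prove both directions by translating each side into a coordinatewise condition in the quotient group $F^*/N(K^*)$. Lemma~\ref{L: iso} is already tuned for this on the cohomology side, and the classical theory of quaternion algebras handles the quaternionic side; combining the two gives the claim directly.

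First, I would unpack Lemma~\ref{L: iso}. Writing $D_X=\diag(a_1,\ldots,a_n)$ and $D_Y=\diag(b_1,\ldots,b_n)$ with $a_i,b_i\in F^*$, the condition $D_X=\overline{D}DD_Y$ with $D=\diag(z_1,\ldots,z_n)$, $z_i\in K^*$, becomes the pointwise requirement $a_i=N(z_i)b_i$ for every $i$. Hence $X$ and $Y$ are cohomologous if and only if $a_i$ and $b_i$ lie in the same coset of $F^*/N(K^*)$ for each $i=1,\ldots,n$.

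Second, I would invoke the standard criterion for isomorphism of quaternion algebras that share a slot: for $d\in F^*\setminus F^{*2}$ and $K=F(\sqrt{d})$, the assignment $a\mapsto [(d,a)_F]$ induces an injection $F^*/N(K^*)\hookrightarrow \Br(F)$, so that $(d,a)_F\simeq (d,b)_F$ iff $ab^{-1}\in N(K^*)$. This rests on the Brauer-group identity $[(d,a)_F]\cdot[(d,b)_F]=[(d,ab)_F]$, on the fact that $(d,c)_F$ splits precisely when $c\in N(K^*)$, and on the observation that Brauer equivalent central simple algebras of the same $F$-dimension are isomorphic (so that two Brauer equivalent quaternion algebras are isomorphic). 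These are the ingredients referenced from \cite{GSz}.

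Combining the two equivalences coordinate by coordinate,
\[
X\sim Y
\;\Longleftrightarrow\;
\forall i:\ a_i\equiv b_i \bmod N(K^*)
\;\Longleftrightarrow\;
\forall i:\ (d,a_i)_F\simeq (d,b_i)_F
\;\Longleftrightarrow\;
\mathcal{G}(X)\simeq \mathcal{G}(Y),
\]
where the last step uses that $\mathcal{G}$ is defined componentwise and that isomorphism of $n$-tuples in $\mathcal{Q}^n(F,d)$ means componentwise isomorphism. I do not anticipate a serious obstacle: the argument is essentially a dictionary between Lemma~\ref{L: iso} and the structure of $N(K^*)$ inside $\Br(F)$ via the norm-residue symbol, with the only mild point being to ensure that all the $a_i,b_i$ indeed lie in $F^*$ (guaranteed by Theorem~\ref{T1}) so that the quaternion algebras $(d,a_i)_F$ and $(d,b_i)_F$ are well-defined members of $\mathcal{Q}(F,d)$.
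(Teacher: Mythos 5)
Your proposal is correct and follows essentially the same route as the paper: Lemma \ref{L: iso} reduces the cohomology condition to the coordinatewise coset condition $a_i b_i^{-1}\in N(K^*)$, which is then matched with the classical criterion that $(d,a)_F\simeq(d,b)_F$ if and only if $ab^{-1}\in N(K^*)$. The only difference is that you spell out the Brauer-group justification of that criterion, which the paper simply cites as known.
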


Here, two $n$-tuples $(A_1,\ldots,A_n)$ and $(A_1',\ldots,A_n')$ of quaternion algebras are said to be isomorphic if for all $1\leq i\leq n$, $A_i\simeq A_i'$. 

\begin{proof} By Lemma \ref{L: iso}, $X$ and $Y$ are cohomologous if and only if $D_Y=\overline{D}DD_X$ for some diagonal $D\in\GL(n,K)$. If $D_X=\diag(d_1,\ldots,d_n)$ and $D_Y=\diag(d_1',\ldots,d_n')$, this is equivalent to the statement that
\[\forall i\in \{1,\ldots,n\}, d_i'd_i^{-1}\in N(K^*),\]
since each diagonal element of $\overline{D}D$ is of the form $\overline{p}p=N(p)$ for some $p\in K^*$. Now it is known that two quaternion algebras $(a,b)_F$ and $(a,b')_F$ are isomorphic if and only if $b'b^{-1}\in N(F(\sqrt{a})^*)$. Thus $X$ and $Y$ are cohomologous if and only if their images under $\mathcal G$ are component-wise isomorphic, as desired. 
\end{proof}

\subsection{Existence of Cocycles and Nested Quaternion Algebras}
The map $\mathcal G$ is in general not surjective since, as remarked earlier, not every diagonal matrix arises as $D_X$ for some cocycle $X$. We will therefore give a sufficient condition for a diagonal matrix to satisfy this, in terms of norms of quaternion algebras. While we do not know of a necessary condition over general field, the condition we will give will be enough to obtain a classification over an important class of fields.

Let $n$ be a positive integer and $\Delta=\{d_1,\ldots,d_n\}\subset F^*$. We call $\Delta$ \emph{norm closed} or, briefly, \emph{closed} if $\prod_{i\in \Delta} d_i\in N(K)$, and \emph{quaternionically nested} if
\[\exists \sigma\in S_{n}: \forall i\in\{1,\ldots,n\}: d_{\sigma(i)} \text{\ is the norm of some\ } q\in\left(d,\prod_{k=0}^{i-1} d_{\sigma(k)}\right)_F,\]
where $S_{n}$ is the symmetric group on $\{1,\ldots,n\}$, and where we set $d_0=-1$ and extend $\sigma$ to $\{0,\ldots,n\}$ by $\sigma(0)=0$. By definition of the norm of a quaternion algebra, the property of being quaternionically nested amounts to saying that for each $i\in\{1,\ldots,n\}$ there exist $x,y\in K$ such that
\[N(x)+N(y)\prod_{k=1}^{i-1} d_{\sigma(k)}=d_{\sigma(i)}.\]

We are interested in finite subsets $I\subseteq F^*$ such that
\begin{equation}\tag{Q}\label{Q}
\text{every norm closed subset of\ } I \text{\ is quaternionically nested}.
\end{equation}
Note that this property trivially holds for all $I$ over fields where the norm of every quaternion algebra is surjective, as is the case over $p$-adic fields and, more generally, fields of cohomological dimension at most 2. We can now prove the following.

\begin{Thm}\label{T2} Assume that $D=\diag(d_1,\ldots,d_n)\in\GL(n,F)$ where $\{d_1,\ldots,d_n\}$ is norm closed and satisfies property \emph{(Q)}. Then there exists a matrix $X\in \GL(n,K)$ satisfying $X^*X=D$. 
 \end{Thm}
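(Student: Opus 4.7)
The plan is to construct $X\in\GL(n,K)$ column by column, with columns $v_1,\ldots,v_n\in K^n$ forming an orthogonal basis for the standard Hermitian form $h(u,w)=u^*w$ with prescribed squared norms $v_i^*v_i=d_i$; such an $X$ is automatically invertible since $\det(X^*X)=\det D\neq 0$. Because permuting the columns of $X$ merely permutes the diagonal of $X^*X$, I may assume that the permutation $\sigma$ from the quaternionic-nesting hypothesis is the identity. Concretely this means that for every $k=1,\ldots,n$ one has
\[d_k=N(\alpha_k)+(d_1\cdots d_{k-1})N(\beta_k)\]
for some $\alpha_k,\beta_k\in K$ (empty product $=1$); the norm-closedness $\prod_id_i\in N(K^*)$ is of course preserved by this relabelling.

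I would then build $v_1,\ldots,v_{n-1}$ by induction, arranging that $v_k$ is supported in the first $k+1$ coordinates. For the inductive step, suppose that the orthogonal vectors $v_1,\ldots,v_{k-1}$ have been constructed with $v_j^*v_j=d_j$ and $v_j$ supported in the first $j+1$ coordinates. Inside the span $V_k$ of the first $k$ standard basis vectors of $K^n$, the orthogonal complement of $\mathrm{span}(v_1,\ldots,v_{k-1})$ is one-dimensional. By multiplicativity of the discriminant, together with the identity $c^2=N(c)$ for every $c\in F^*$, this complement has discriminant class $d_1\cdots d_{k-1}$ in $F^*/N(K^*)$; hence rescaling any of its generators yields a vector $u_k\in V_k$, orthogonal to every $v_j$ with $j<k$, satisfying $u_k^*u_k=d_1\cdots d_{k-1}$. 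Define
\[v_k:=\beta_ku_k+\alpha_ke_{k+1}.\]
Then $v_k$ is supported in the first $k+1$ coordinates, is orthogonal to each $v_j$ with $j<k$ (because $u_k$ is, and each earlier $v_j$ vanishes in coordinate $k+1$), and by the nesting relation
\[v_k^*v_k=(d_1\cdots d_{k-1})N(\beta_k)+N(\alpha_k)=d_k.\]

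The final column $v_n$ is chosen in the one-dimensional orthogonal complement of $v_1,\ldots,v_{n-1}$ in all of $K^n$. By the same discriminant calculation, this complement has discriminant class $d_1\cdots d_{n-1}$, so a generator $w$ satisfies $w^*w=(d_1\cdots d_{n-1})N(s)$ for some $s\in K^*$. The norm-closedness of $\{d_1,\ldots,d_n\}$ gives $d_n/(d_1\cdots d_{n-1})\in N(K^*)$, so rescaling $w$ by a suitable scalar produces $v_n$ with $v_n^*v_n=d_n$. Assembling $X=(v_1\mid\cdots\mid v_n)$ then satisfies $X^*X=D$. The main subtlety, and the step I expect to require the most care, is the discriminant observation that allows $u_k$ (and eventually $v_n$) to be extracted with the exact prescribed norm from a rank-one Hermitian form of the appropriate discriminant class; this rests squarely on $(d_1\cdots d_{k-1})^2\in N(K^*)$. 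Once this is in hand, the nesting relation delivers the intermediate columns and the norm-closed condition delivers the last one, and both hypotheses are used in an essential way.
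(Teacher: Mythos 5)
Your proof is correct, and it shares the paper's inductive skeleton: you build the columns of $X$ one at a time, each supported in one more coordinate than the previous ones, taking the $k$-th column to be (a multiple of a vector of norm $d_1\cdots d_{k-1}$ orthogonal to the earlier columns) plus (a multiple of $e_{k+1}$), with the quaternionic nesting relation supplying the two coefficients and norm-closedness fixing the last column after a rescaling --- exactly the role played in the paper by the rows $x_i$ and the elements $\mu_i,a_{i+2},\mu_n$. Where you genuinely diverge is in how the intermediate vector of norm $d_1\cdots d_{k-1}$ is produced. The paper writes it down explicitly by a Gram--Schmidt-type recursion: $x_{i+1}$ is obtained from $x_i$ by replacing the entry $x_i^{(i+1)}$ with $-s_i/\overline{x_i^{(i+1)}}$, which requires dividing by $x_i^{(i+1)}$; to guarantee that entry is non-zero, the paper first reduces to the case where $\{d_1,\ldots,d_n\}$ contains no proper non-empty norm-closed subset and assembles the general case block-diagonally. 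You instead obtain the vector abstractly, as a rescaled generator of the rank-one orthogonal complement of $\mathrm{span}(v_1,\ldots,v_{k-1})$ inside the span of the first $k$ standard basis vectors, computing its class in $F^*/N(K^*)$ via multiplicativity of the Hermitian discriminant together with $c^2=N(c)$ for $c\in F^*$. This buys a cleaner argument: no division bookkeeping and no preliminary decomposition into minimal closed subsets, at the mild cost of invoking standard facts about nondegenerate Hermitian forms (the orthogonal complement of a nondegenerate subspace is nondegenerate and discriminants multiply over orthogonal sums), which the paper's explicit recursion effectively verifies by hand; your argument also only uses that the full set $\{d_1,\ldots,d_n\}$ is norm closed and quaternionically nested, slightly less than the full strength of property (Q).
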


If two finite sets $I$ and $J$ of $F^*$ are norm closed with $I\subseteq J$, then $J\setminus I$ is closed. Iterating this process, one can partition $\{d_1,\ldots,d_n\}$ into closed subsets, none of which contains a proper, non-empty closed subset. If for each such set $I=\{d_{i_1},\ldots,d_{i_m}\}$ we can construct $X_I$ such that $X_I^*X_I=\diag(d_{i_1},\ldots,d_{i_m})$, then the block-diagonal matrix $X$ with blocks $X_I$ will satisfy $X^*X=D$.

\begin{proof} In view of the above remark, we may assume that $\{d_1,\ldots,d_n\}$ contains no proper, non-empty closed subsets. We may also assume, upon renumbering the diagonal elements of $D$, that $\sigma$ is the identity permutation. 

We will construct the rows $x_i$ of $X^*$ inductively. Note that $X^*X=D$ is equivalent to $(x_i,x_j)=\delta_{ij}d_i$, where the $F$-bilinear pairing $(,):K^n\times K^n\to K$ is defined by 
\[\left(\left(x_i^{(1)},\ldots,x_i^{(n)}\right),\left(x_j^{(1)},\ldots,x_j^{(n)}\right)\right)=\sum_k x_i^{(k)}\overline{x_j^{(k)}}.\]
In particular 
\[(x_i,x_i)=\sum_k N\left(x_i^{(k)}\right).\]

If $n=1$, then $d_1=N(a_1)$ for some $a_1\in K$, and setting $X=a_1$, we are done.

If $n>1$, then we set
\[x_1=(a_1,a_2,0,\ldots, 0)\]
where $a_1,a_2\in K$ satisfy $N(a_1)+N(a_2)=d_1$. Such $a_1$ and $a_2$ exist by property (Q), and then $(x_1,x_1)=d_1$. If $n=2$, then $d_2=N(\mu_1)d_1$, and setting
\[x_2=\mu_1(-\overline{a_1},\overline{a_2}),\]
we are done. If $n>2$, then by property (Q) there exist $\mu_1,a_3\in K$ such that $N(\mu_1)d_1+N(a_3)=d_2$, and then we set
\[x_2=(-\mu_1\overline{a_1},\mu_1\overline{a_2},a_3,0,\ldots, 0).\]
In both cases $(x_1,x_2)=0$ and $(x_2,x_2)=d_2$. Note that in the latter case, \[N(x_2^{(1)})+N(x_2^{(2)})=N(\mu_1)d_1,\] 
and $a_3\neq 0$ since $\{d_1,d_2\}$ is not closed. 

Assume that 
\[x_i=\left(x_i^{(1)},\ldots,x_i^{(i+1)},0,\ldots,0\right)\]
has been constructed, $2\leq i<n-1$, and satisfies $(x_i,x_j)=\delta_{ij}d_j$ for all $j<i$ as well as $x_i^{(i+1)}\neq 0$ and
\[\exists \lambda_i\in K: \sum_{j=1}^i N\left(x_i^{(j)}\right)=N(\lambda_i)\prod_{j=1}^{i-1}d_j.\]
Denoting $\sum_{j=1}^i N\left(x_i^{(j)}\right)$ by $s_i$, we then set
\[x_{i+1}=\left(\mu_ix_i^{(1)},\ldots,\mu_ix_i^{(i)},-\mu_is_i/\overline{x_i^{(i+1)}},a_{i+2},0,\ldots,0\right)\]
where $\mu_i,a_{i+2}$ satisfy
\[\frac{s_id_i}{N\left(x_i^{(i+1)}\right)}N(\mu_i)+N(a_{i+2})=d_{i+1}.\]
Again, such elements exist by property (Q), since
\[\frac{s_id_i}{N\left(x_i^{(i+1)}\right)}=N\left(\lambda_i/x_i^{(i+1)}\right)\prod_{j=1}^{i}d_j.\]
Thus constructed, $x_{i+1}$ satisfies the properties assumed for $x_i$. Indeed, we have $x_{i+1}^{(i+2)}=a_{i+2}\neq 0$ by the non-closedness of $\{d_1,\ldots,d_{i+1}\}$, and \[(x_j,x_{i+1})=\mu_i(x_j,x_i)=0\]
whenever $j<i$. An easy computation shows that $(x_i,x_{i+1})=0$. Furthermore,
\[\begin{array}{rcl}\sum_{j=1}^{i+1}N\left(x_{i+1}^{(j)}\right)&=&s_i+\frac{s_i^2}{N\left(x_i^{(i+1)}\right)}\\
  &=&\frac{s_i}{N\left(x_i^{(i+1)}\right)}\left(N\left(x_i^{(i+1)}\right)+s_i\right)\\
  &=&s_id_i/N\left(x_i^{(i+1)}\right)=N\left(\lambda_i/x_i^{(i+1)}\right)\prod_{j=1}^id_k.
  \end{array}\]
This finally implies  
\[\begin{array}{rcl}(x_{i+1},x_{i+1})&=&N(\mu_i)\left(s_i+\frac{s_i^2}{N\left(x_i^{(i+1)}\right)}\right)+N(a_{i+2})\\
  &=&N\left(\mu_i/x_i^{(i+1)}\right)s_id_i+N(a_{i+2})=d_{i+1}.
  \end{array}\]
In this fashion we construct $x_3,\ldots,x_{n-1}$ inductively. Having done so it remains to construct $x_n$, for which we set
\[y=\left(x_{n-1}^{(1)},\ldots,x_{n-1}^{(n-1)},-s_{n-1}/\overline{x_{n-1}^{(n)}}\right).\]
Then as before we have $(x_j,y)=0$ for all $j<n$, and a computation similar to the above shows that
\[(y,y)=N\left(\lambda_{n-1}/x_{n-1}^{(n)}\right)\prod_{j=1}^{n-1}d_k\]
and since $\{d_1,\ldots,d_n\}$ is closed there exists $\mu_n$ such that $x_n=\mu_ny$ satisfies
\[(x_n,x_n)=d_n.\]
This completes the proof.
\end{proof}

\begin{Rk}\label{Rk} Observe that the requirement that $\{d_1,\ldots,d_n\}$ be closed is necessary, for indeed, if $X^*X=D$, then
\[\prod_{i=1}^nd_i=\det(D)=\det(X^*)\det(X)=\overline{\det(X)}\det(X)=N(\det(X)).\]
\end{Rk}

\section{Classification over Special Fields}
\subsection{Fields of Cohomological Dimension at Most 2}
Recall that $F$ is assumed to have characteristic zero. In this section, we in addition assume that $F$ has the property that the norm of any quaternion algebra over $F$ is surjective. 

It is known (see e.g.\ \cite{Se}) that this property is satisfied by any field of cohomological dimension at most 2; these include all algebraically closed fields, $p$-adic fields (for any prime $p$), totally imaginary number fields, function fields of surfaces and curves over algebraically closed fields, and Merkurjev's Tower of fields. Such fields appear in the literature, for example, in connection with Serre's Conjecture II on the vanishing of Galois cohomology. (See e.g.\ \cite{G} for a survey.) From another viewpoint, let $d$ and $i$ be positive integers. A field $F$ is said to have the property $C_i^{(d)}$ if every homogeneous polynomial of degree $d$ in $n>d^i$ variables has a non-trivial zero. If $F$ satisfies this for any $d>0$, then $F$ is said to have the property $C_i$. It is an established fact that if a quadratic form $q$ in $n$ variables over a field $F$ of characteristic 0 represents zero, then there exist $a_1,\ldots,a_n\in F^*$ such that $q(a_1,\ldots,a_n)=0$. 
Therefore if $F$ has the 
property $C_2^{(2)}$, and in 
particular if $F$ is a $C_2$-field, then the norm of any quaternion algebra over $F$ is surjective.

Under this assumption, the property (Q) assumed in Theorem \ref{T2} above is trivially satisfied. Thus Lemma \ref{L: iso} and Theorem \ref{T2} then imply the following.

\begin{Prp} Let $F$ be a field over which the norm of any quaternion algebra is surjective, $d\in F^*\setminus {F^*}^2$, and set $K=F(\sqrt{d})$. Then there is a one-to-one correspondence between $H_d^1(r_{DJ},F,d)$ and $(F^*/N(K^*))^{n-1}$.
\end{Prp}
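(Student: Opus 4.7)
The plan is to combine Theorem \ref{T2}, Lemma \ref{L: iso}, and Remark \ref{Rk} with a short group-theoretic identification. First I would observe that because the norm of every quaternion algebra over $F$ is surjective, the hypothesis (Q) of Theorem \ref{T2} is vacuously satisfied for every finite subset of $F^*$. Consequently, Theorem \ref{T2} applies to any diagonal $D=\diag(d_1,\ldots,d_n)\in\GL(n,F)$ whose diagonal entries form a norm-closed set, producing a cocycle $X\in Z_d(r_{DJ},F,d)$ with $D_X=D$. Conversely, Remark \ref{Rk} shows that if $D=X^{*}X$ for some $X\in\GL(n,K)$, then $\prod_i d_i=N(\det X)\in N(K^*)$, so norm closedness is also necessary for $D$ to arise as $D_X$.

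Next I would introduce the map
\[
\Phi:H_d^1(r_{DJ},F,d)\longrightarrow (F^*/N(K^*))^n,\qquad [X]\longmapsto \bigl(d_1N(K^*),\ldots,d_nN(K^*)\bigr),
\]
where $\diag(d_1,\ldots,d_n)=D_X$. By Lemma \ref{L: iso}, two cocycles are cohomologous precisely when the diagonal entries of their matrices $D_{(\cdot)}$ agree component-wise modulo $N(K^*)$, so $\Phi$ is well-defined and injective. The previous paragraph identifies the image of $\Phi$ with the kernel of the product homomorphism
\[
\pi:(F^*/N(K^*))^n\longrightarrow F^*/N(K^*),\qquad (a_1,\ldots,a_n)\longmapsto \prod_i a_i.
\]

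Finally, the assignment
\[
(F^*/N(K^*))^{n-1}\longrightarrow\ker\pi,\qquad (a_1,\ldots,a_{n-1})\longmapsto\bigl(a_1,\ldots,a_{n-1},(a_1\cdots a_{n-1})^{-1}\bigr),
\]
is a bijection whose inverse merely drops the last coordinate; composing with $\Phi^{-1}$ produces the asserted one-to-one correspondence. There is no genuine obstacle at this stage: all the arithmetic content is already packaged into Theorem \ref{T2} (existence of cocycles, using the quaternion-norm hypothesis) and Lemma \ref{L: iso} (translation of cohomology into component-wise equivalence modulo $N(K^*)$), and what remains is only a formal identification of $\ker\pi$ with $(F^*/N(K^*))^{n-1}$.
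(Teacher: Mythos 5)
Your proof is correct and follows essentially the same route as the paper: Theorem \ref{T2} (with property (Q) automatic from surjectivity of quaternion norms) gives existence of cocycles for norm-closed diagonals, Remark \ref{Rk} gives necessity of norm closedness, and Lemma \ref{L: iso} gives well-definedness and injectivity. The only difference is cosmetic --- you map to $(F^*/N(K^*))^n$ and identify the image with the kernel of the product map before dropping the last coordinate, whereas the paper drops the last coordinate from the outset and notes that the class of $d_n$ is determined by the others.
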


\begin{proof} Define the map $Z_d^1(r_{DJ},F,d)\to(F^*/N(K^*))^{n-1}$ by 
\[X\mapsto (d_1 N(K^*),\ldots,d_{n-1}N(K^*)),\]
where $D_X=\diag(d_1,\ldots,d_n)$. This map is surjective by Theorem \ref{T2} since for any $d_1,\ldots,d_{n-1}\in F^*$ there exists $d_n\in F^*$ such that $\{d_1,\ldots,d_n\}$ is norm closed. It induces a map $H_d^1(r_{DJ},F,d)\to(F^*/N(K^*))^{n-1}$, which is well-defined by Lemma \ref{L: iso}, and injective by the same lemma, since the requirement that $\det D_X\in N(K^*)$ for any cocycle $X$ implies that the class of $d_n$ in $F^*/N(K^*)$ is determined by those of $d_1,\ldots,d_{n-1}$. 
\end{proof}

In terms of the Brauer group, the following holds, where by $S(F)$ we denote a transversal of the cosets of $F^{*2}$ in $F^*$.

\begin{Cor} Let $F$ be a field over which the norm of any quaternion algebra is surjective. Let $H_d^1(F)=\bigsqcup_{d\in S(F)}H_d^1(r_{DJ},F,d)$. Then there is a map $H_d^1(F)\to\Br(F)^{n-1}$, whose image generates $\Br_2(F)^{n-1}$, where $\Br_2(F)$ is the subgroup of $\Br(F)$ formed by all 2-torsion elements.
\end{Cor}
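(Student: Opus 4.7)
\smallskip
\noindent\textbf{Proof plan.} The first step is to construct the map. By the preceding proposition, for each $d\in S(F)$ there is a bijection
\[H_d^1(r_{DJ},F,d)\xrightarrow{\sim}\bigl(F^*/N(K^*)\bigr)^{n-1},\quad [X]\mapsto(d_1N(K^*),\ldots,d_{n-1}N(K^*)),\]
where $D_X=\diag(d_1,\ldots,d_n)$ and $K=F(\sqrt{d})$. I would compose this with the map
\[F^*/N(K^*)\hookrightarrow\Br_2(F),\quad aN(K^*)\mapsto[(d,a)_F],\]
which is well defined and injective by the standard fact (already used in the proof of Proposition \ref{T: q}) that $(d,a)_F\simeq(d,b)_F$ iff $ab^{-1}\in N(K^*)$, and which lands in the $2$-torsion since every quaternion algebra has order dividing $2$ in $\Br(F)$. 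Taking the disjoint union over $d\in S(F)$ yields the desired map $H_d^1(F)\to\Br(F)^{n-1}$, with image visibly contained in $\Br_2(F)^{n-1}$.

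The second step is to exhibit enough elements in the image. For each index $i\in\{1,\ldots,n-1\}$ and each pair $(d,a)\in S(F)\times F^*$, the tuple in $(F^*/N(K^*))^{n-1}$ with $i$th entry $aN(K^*)$ and all other entries trivial corresponds, via the bijection above, to a well-defined class in $H_d^1(r_{DJ},F,d)$; under our map this class is sent to the tuple
\[(1,\ldots,1,[(d,a)_F],1,\ldots,1)\in\Br_2(F)^{n-1},\]
with the nontrivial entry in position $i$. As $(d,a)$ ranges over $S(F)\times F^*$, the class $[(d,a)_F]$ ranges over all isomorphism classes of quaternion algebras over $F$.

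The third step invokes Merkurjev's theorem: over a field of characteristic different from $2$, $\Br_2(F)$ is generated by the classes of quaternion algebras. Combining this with the previous step, the subgroup of $\Br_2(F)^{n-1}$ generated by the image of $H_d^1(F)$ surjects onto $\Br_2(F)$ in each coordinate, and since the coordinates can be generated independently of one another, the generated subgroup equals the full product $\Br_2(F)^{n-1}$.

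The main obstacle is the appeal to Merkurjev's theorem; the rest is a routine assembly of the bijection from the preceding proposition with the standard quaternion-algebra identification of $F^*/N(K^*)$ inside $\Br_2(F)$. Without Merkurjev one would only obtain the weaker statement that the image generates the subgroup of $\Br_2(F)^{n-1}$ spanned componentwise by quaternion algebra classes.
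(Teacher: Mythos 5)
Your proposal is correct and follows essentially the same route as the paper: the map is the one induced by $\mathcal G$ (equivalently, composing the bijection with $(F^*/N(K^*))^{n-1}$ with $a\mapsto[(d,a)_F]$), surjectivity onto enough tuples comes from the existence of cocycles realizing arbitrary prescribed first $n-1$ diagonal entries, and the generation of $\Br_2(F)$ is Merkurjev's theorem. If anything, your argument is slightly more careful than the paper's, which asserts that the image consists of \emph{all} $(n-1)$-tuples of quaternion classes (within a fixed $d$ the classes share the subfield $F(\sqrt{d})$), whereas you only use the tuples that are trivial outside one coordinate, which indeed suffices to generate $\Br_2(F)^{n-1}$.
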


\begin{proof} The existence of such a map follows from Proposition \ref{T: q}, the map being induced by $\mathcal G$ defined there. An argument similar to that of the above proof shows that its image consists of all $n-1$-tuples of Brauer classes of quaternion algebras. Due to the celebrated theorem by Merkurjev \cite{Me}, these classes generate $\Br_2(F)$.
\end{proof}

\subsection{Extensions of Real Closed Fields}
A field $R$ is said to be \emph{formally real} if $-1$ cannot be represented as a sum of squares in $R$. A formally real field with no formally real algebraic field extension $R\subsetneq R'$ is called \emph{real closed}. The real field $\mathbb R$ is real closed, and in fact every real closed field is elementarily equivalent to $\mathbb R$ in the language of rings, meaning that it satisfies precisely the same first-order statements in this language. We will now extend the results obtained above to the fields $R$, $R(X)$ and $R(X,Y)$, where $R$ is real closed. The Artin--Schreier Theorem asserts that if a field is real closed, then it admits a unique ordering: namely, $a\leq b$ if and only if $b-a$ is a square in $R$. We will call a rational function $f\in R(X_1,\ldots,X_n)$ \emph{positive} if $f(x_1,\ldots,x_n)\geq 0$ for all $(x_1,\ldots,x_n)\in R^n$ at which $f$ is defined.

Hilbert asked, in what became known as Hilbert's Seventeenth Problem, whether it is true that every positive $f\in\mathbb R(X_1,\ldots,X_n)$ is a finite sum of squares. This was answered in the affirmative by Artin, without giving any bound for the number of squares needed. (The corresponding question for \emph{polynomials} was answered in the negative by Hilbert himself.) Pfister \cite{PfD} later gave the upper bound $2^n$ for $R(X_1,\ldots,X_n)$ for each $n\in\mathbb N$ and each real closed field $R$. The bound is sharp for $n\leq 2$. Pfister in fact proved the following more general statement in \cite{PfE}.

\begin{Prp}\label{Pf} Let $R$ be a real closed field, $n\in\mathbb N$, and let $f\in R(X_1,\ldots,X_n)$ be positive. Then every $n$-Pfister form over $R(X_1,\ldots,X_n)$ represents $f$.
\end{Prp}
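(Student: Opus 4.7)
The plan is to argue by induction on $n$, leveraging Pfister's theory of multiplicative (round) forms and the Artin--Schreier characterization of positivity in a real closed field.

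\textbf{Base case.} For $n = 0$ the field is $R$ itself. By Artin--Schreier, an element of $R$ is positive if and only if it is a non-zero square, and the unique $0$-Pfister form $\langle 1 \rangle$ has value set exactly $R^{*2}$. So the statement is immediate.

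\textbf{Inductive step.} Assume the result over $L = R(X_1,\ldots,X_{n-1})$, and set $K = L(X_n)$. Let $\phi$ be an $n$-Pfister form over $K$ and $f \in K^*$ positive. First I would invoke Pfister's roundness theorem: the set $D^*(\phi)$ of non-zero values represented by $\phi$ is a subgroup of $K^*$ containing $K^{*2}$. Using this, we may multiply $f$ by an arbitrary non-zero square without loss of generality, which lets us reduce to the case where $f$ is a square-free polynomial in $L[X_n]$. The next step is to analyze $\phi$ and $f$ place-by-place on $\mathbb{P}^1_L$: at each finite place $v$ coming from a monic irreducible $p(X_n) \in L[X_n]$, the residue field $k(v)$ is a finite algebraic extension of $L$, and the second residue homomorphism in the Witt theory of $L(X_n)$ sends the $n$-Pfister form $\phi$ to an $(n-1)$-Pfister form $\partial_v \phi$ over $k(v)$, together with a residue $\partial_v f$. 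Applying the inductive hypothesis to these residual data (once one has verified that positivity descends appropriately) gives local representations, which are then assembled into a global representation of $f$ by $\phi$ over $K$ via the Milnor--Scharlau exact sequence for the Witt group of a rational function field in one variable.

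\textbf{Main obstacle.} The chief difficulty is that the residue fields $k(v)$ are finite algebraic extensions of $L$, not purely transcendental extensions of $R$, so the inductive hypothesis does not directly apply to them. To resolve this, I would strengthen the inductive statement to cover all finitely generated extensions of $R$ of transcendence degree at most $n-1$, and use Scharlau's transfer (trace form) to reduce representability over $k(v)$ to representability over $L$ for each $v$. Alternatively, one can bypass the residue analysis by invoking the Cassels--Pfister subform theorem directly: it guarantees that if $\phi_K$ represents $f$ as a rational function at all, then a representation already exists with polynomial coefficients in $X_n$, reducing the problem to a bounded-degree, finite-dimensional statement over $L$ amenable to the inductive hypothesis. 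This Cassels--Pfister route is likely the cleanest realization of the strategy and is how I expect the proof to run in detail.
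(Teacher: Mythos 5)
The first thing to note is that the paper offers no proof of this proposition: it is quoted as a theorem of Pfister with a reference to \cite{PfE}, so your attempt can only be measured against Pfister's argument in the literature. Your outline does capture the general shape of the quadratic-form half of that argument: one shows that the $(n+1)$-fold Pfister form $\phi\otimes\langle 1,-f\rangle$ is torsion, and then proves by induction on transcendence degree over $R$ (for all finitely generated extensions, with Scharlau transfer handling the finite residue field extensions and residues/the Milnor--Scharlau sequence handling $L(X_n)$) that such torsion Pfister forms are hyperbolic, whence $f\in D(\phi)$. But there is a genuine gap exactly where you write ``once one has verified that positivity descends appropriately''. The hypothesis on $f$ is pointwise: $f(x)\ge 0$ at the points of $R^n$ where $f$ is defined. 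No residue, transfer or Witt-group argument can be run with this notion; what the induction needs is that $f$ is nonnegative with respect to every ordering of $R(X_1,\ldots,X_n)$, so that $\phi\otimes\langle 1,-f\rangle$ has zero signature at every ordering and is therefore torsion. Moreover, in your strengthened inductive statement over arbitrary finitely generated extensions of $R$ the pointwise notion of positivity is not even defined, so you are forced to switch to positivity at orderings -- and the bridge from pointwise positivity to total positivity is precisely Artin's qualitative solution of Hilbert's Seventeenth Problem, proved via the Artin--Lang homomorphism theorem or Tarski's transfer principle. This is the deepest ingredient of the statement, and it is entirely absent from your sketch.

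The proposed ``cleanest realization'' does not repair this. The Cassels--Pfister theorem only asserts that \emph{if} $\phi$ represents the polynomial $f$ over $L(X_n)$, then it already does so with coefficients in $L[X_n]$; it post-processes a representation rather than producing one, so it cannot ``bypass the residue analysis'', and the ensuing coefficientwise, bounded-degree induction over $L$ is exactly the naive variable-by-variable approach that is known to fail (already over $\mathbb R$, psd polynomials need not be sums of squares of polynomials, and pointwise positivity does not interact well with coefficients). If you pursue the first route, two further points need care: the second residue of an $n$-fold Pfister form is only a Witt class, equal to zero or to a scalar multiple of an $(n-1)$-fold Pfister form over $k(v)$, and you must verify (via Baer--Krull lifting of orderings) that torsionness passes to these residue forms before the inductive hypothesis can be applied.
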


A quadratic form $q$ over a field $F$ is called an \emph{$n$-Pfister form} or a \emph{multiplicative $n$-form} if $q=\langle 1,a_1\rangle\otimes\cdots\otimes \langle 1,a_n\rangle$ for some $a_1,\ldots,a_n\in F^*$, where 
\[\langle 1,a\rangle(x,y)=x^2-ay^2.\] One then writes $q=\langle\langle a_1,\ldots,a_n\rangle\rangle$. A consequence of the above proposition is that the $n$-Pfister form $\langle\langle 1,\ldots,1\rangle\rangle$ represents any $f\in R(X_1,\ldots,X_n)$, which is precisely the statement that $f$ is a sum of $2^n$ squares.

\begin{Rk} A quadratic form is a $2$-Pfister form if and only if it is the norm of a quaternion algebra. More specifically, $\langle 1,a\rangle\otimes\langle 1,b\rangle$ is the norm of $(a,b)_F$ for any $a,b\in F^*$. This is true over any field of characteristic not 2. 
\end{Rk}

Let now $F$ be any of $R$, $R(X)$ or $R(X,Y)$ with $R$ real closed. An extension $K=F(\sqrt{d})$ with $d\in F^*$ is called \emph{imaginary} if $d$ negative (i.e.\ $-d$ positive in $F$). This terminology is in analogy with the classical notion of an imaginary number field, i.e.\ a quadratic extension $\mathbb Q(\sqrt{d})$ of $\mathbb Q$ with $d<0$. In view of Pfister's results, we can prove the following analogue of Theorem \ref{T2}.

\begin{Prp}\label{P3} Let $F$ be any of $R$, $R(X)$ or $R(X,Y)$ with $R$ a real closed field and $K=F(\sqrt{d})$ an imaginary extension, and let $D=\diag(d_1,\ldots,d_n)\in\GL(n,F)$. The following are equivalent.  
\begin{enumerate}
 \item The set $\{d_1,\ldots,d_n\}$ is norm closed and for each $1\leq i\leq n$, $d_i$ is positive.
 \item There exists a matrix $X\in \GL(n,K)$ satisfying $X^*X=D$
\end{enumerate} 
\end{Prp}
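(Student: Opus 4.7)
The plan is to establish the two implications separately: $(2)\Rightarrow(1)$ is a direct verification, while $(1)\Rightarrow(2)$ reduces the claim to Theorem \ref{T2} via Pfister's theorem.

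For $(2)\Rightarrow (1)$, closedness is precisely Remark \ref{Rk}: $\prod_i d_i=\det(X^*X)=N(\det X)\in N(K^*)$. For positivity, I would expand $d_i=(X^*X)_{ii}=\sum_k N(X_{ki})$ and write each entry as $X_{ki}=a_{ki}+b_{ki}\sqrt{d}$ with $a_{ki},b_{ki}\in F$, so that $N(X_{ki})=a_{ki}^2+(-d)b_{ki}^2$. Since $K/F$ is imaginary, $-d$ is positive, and each summand is thus a square plus a positive multiple of a square, hence non-negative on the domain of definition. The sum $d_i$ is therefore positive in the sense of Proposition \ref{Pf}.

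For $(1)\Rightarrow(2)$, I want to invoke Theorem \ref{T2}, which requires $\{d_1,\ldots,d_n\}$ to satisfy property (Q). Since every subset again consists of positive elements, it suffices to prove the following uniform statement: for any closed subset $S=\{d_{i_1},\ldots,d_{i_m}\}$ of positive elements and any ordering $\sigma$, each $d_{\sigma(j)}$ is represented by the norm form of $A_j=(d,b_j)_F$, where $b_j=\prod_{k=0}^{j-1}d_{\sigma(k)}$ and $d_0=-1$. As the norm form of $A_j$ is the 2-Pfister form $\langle\langle -d,-b_j\rangle\rangle$, the task reduces to showing that this 2-Pfister form represents the positive element $d_{\sigma(j)}$.

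This is where Proposition \ref{Pf} enters, and I would handle the three fields case by case. For $F=R(X,Y)$, representability is exactly Proposition \ref{Pf} applied with two indeterminates. For $F=R(X)$, I would observe that $\langle\langle -d,-b_j\rangle\rangle$ contains the 1-Pfister subform $\langle 1,-d\rangle$, and by Proposition \ref{Pf} with one indeterminate this 1-Pfister subform already represents every positive element. For $F=R$, every positive element of a real closed field is a square, so the 2-Pfister form trivially represents it via its first coordinate. The expected main obstacle is matching Pfister's bound, which ties the degree of the form to the number of indeterminates, to our fixed 2-Pfister norms; the subform trick resolves this uniformly in the $R(X)$ and $R$ cases.
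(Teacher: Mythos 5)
Your proof is correct and follows essentially the same route as the paper: Remark \ref{Rk} together with positivity of the norm $N(x)=a^2+(-d)b^2$ gives $(2)\Rightarrow(1)$, and Proposition \ref{Pf} is used to verify property (Q) so that Theorem \ref{T2} yields $(1)\Rightarrow(2)$. Your case-by-case treatment of $R$ and $R(X)$ via the $1$-Pfister (respectively square) subform of the quaternion norm merely spells out what the paper compresses into ``Proposition \ref{Pf} and the subsequent remark''; the only quibble is notational, since in the paper's convention the norm form of $(d,b)_F$ is $\langle\langle d,b\rangle\rangle$ rather than $\langle\langle -d,-b\rangle\rangle$.
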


\begin{proof} If $D$ satisfies (1), then $\{d_1,\ldots,d_n\}$ satisfy property (Q) since any positive element in $F$ is in the image of the norm of any quaternion algebra over $F$, by Proposition \ref{Pf} and the subsequent remark. Then Theorem \ref{T2} applies and implies (2). To prove the converse, note, as in Remark \ref{Rk}, that if $X^*X=D$, then $\det D=N(\det X)$. Moreover,  
\[d_i=\sum_jN((X)_{ji})\]
for each $i$. From this follows that $\prod_{i=1}^nd_i\in N(K)$, and moreover each $d_i$ is positive since the negativity of $d$ implies that $N(x)$ is positive for each $x\in K$. This completes the proof.
\end{proof}

We therefore have the following classification result, where $F^+$ denotes the set of all positive elements of $F^*$ whenever $F=R(X_1,\ldots,X_n)$ with $R$ real closed and $n\leq 2$.

\begin{Prp} Let $F$ be a rational function field in at most two indeterminates over a real closed field, and $d\in F^*$ negative. Then there is a one-to-one correspondence between $H_d^1(r_{DJ},F,d)$ and $(F^+/N(K^*))^{n-1}$
\end{Prp}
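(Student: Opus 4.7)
The plan is to run the argument of the preceding proposition (valid when the norm form of every quaternion algebra is surjective) with $F^+$ in place of $F^*$, leaning on Proposition \ref{P3} in place of Theorem \ref{T2}. The key structural observation is that since $d$ is negative, the norm $N(x+y\sqrt d) = x^2 - dy^2$ takes values in the positive cone, so $N(K^*)\subseteq F^+$, and $F^+/N(K^*)$ is a well-defined abelian group sitting inside $F^*/N(K^*)$.

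I would first define the natural map
\[\Phi\colon Z_d(r_{DJ},F,d) \to (F^+/N(K^*))^{n-1},\qquad X\mapsto(d_1N(K^*),\ldots,d_{n-1}N(K^*)),\]
where $D_X=\diag(d_1,\ldots,d_n)$. The image lies in $F^+/N(K^*)$ (rather than merely $F^*/N(K^*)$) because $d_i=\sum_kN(X_{ki})$ is a sum of positive norms, or equivalently by the implication $(2)\Rightarrow(1)$ of Proposition \ref{P3}. Next I would verify that $\Phi$ descends to a well-defined injection on cohomology: by Lemma \ref{L: iso}, two cocycles are cohomologous iff their $D$-matrices agree componentwise in $F^*/N(K^*)$, and the last coordinate is pinned down by the first $n-1$ thanks to the norm-closedness relation $\prod_{i=1}^n d_i\in N(K^*)$, which holds for every cocycle by Remark \ref{Rk}.

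For surjectivity, I would start with any tuple $(c_1,\ldots,c_{n-1})\in(F^+)^{n-1}$ and produce a positive representative $c_n$ of the coset $(\prod_{i=1}^{n-1}c_i)^{-1}N(K^*)$; the explicit choice $c_n=\prod_{i=1}^{n-1}c_i$ works, as squares are norms and a product of positive elements is positive. Then $\{c_1,\ldots,c_n\}$ is norm closed and consists entirely of positive elements, so Proposition \ref{P3} yields a cocycle $X$ with $D_X=\diag(c_1,\ldots,c_n)$, and by construction $\Phi(X)$ equals the prescribed tuple.

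I do not anticipate a genuine obstacle: the real work has already been absorbed into Proposition \ref{P3}, itself built on Pfister's Proposition \ref{Pf}, so the remainder is quotient-group bookkeeping. The only point demanding attention is tracking the positivity constraint throughout, which ensures simultaneously that $\Phi$ lands in the smaller quotient $F^+/N(K^*)$ and that the surjectivity witness $c_n$ can be chosen positive.
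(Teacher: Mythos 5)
Your argument is correct and is exactly the route the paper intends: the proposition is stated as the direct analogue of the earlier classification (for fields with surjective quaternion norms), with Proposition \ref{P3} replacing Theorem \ref{T2} and Lemma \ref{L: iso} plus the norm-closedness constraint of Remark \ref{Rk} handling well-definedness and injectivity, while positivity of sums of norms (since $d$ is negative) places the image in $F^+/N(K^*)$. Your explicit surjectivity witness $c_n=\prod_{i=1}^{n-1}c_i$ is a fine concrete choice and matches the spirit of the paper's earlier proof.
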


\subsection{Norm Classes of Certain Field Extensions}
The above results classify quadratic DJ-Lie bialgebra structures on $\g$  whenever all quaternion algebras over $F$ have surjective norms, up to a description of $F^*/N(K^*)$, and whenever $F$ a rational rational function field in at most two indeterminates over a real closed field and $K$ is imaginary, up to a description of $F^+/N(F(\sqrt{d}))$. As the next result shows, these groups are in some cases trivial.

\begin{Prp} Let $F$ be a field.
\begin{enumerate}
\item If $F$ is a $C_1^{(2)}$-field, and $d\in F^*$, then $N(F(\sqrt{d})^*)=F^*$.
\item If $F$ is a $p$-adic field and $d\in F^*\setminus {F^*}^2$, then $F^*/N(F(\sqrt{d})^*)$ is a cyclic group of order 2.
\item If $F=R$ or $F=R(X)$ with $R$ real closed, and $d\in (-1)F^+$, then $N(F(\sqrt{d}))=F^+$.
\item If $F=\mathbb C(X,Y)$, and $d\in F^*\setminus {F^*}^2$, then $F^*/N(F(\sqrt{d})^*)$ is infinite.\footnote{We are grateful to Professor A.\ Merkurjev for drawing our attention to this example.}
\end{enumerate}
\end{Prp}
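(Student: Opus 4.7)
The proof naturally splits across the four parts, all leveraging the characterization that $a \in N(F(\sqrt{d})^*)$ if and only if the quadratic form $x^2 - dy^2$ represents $a$, equivalently if and only if the quaternion algebra $(a,d)_F$ is split.

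For (1), assume $d \notin F^{*2}$ (otherwise the claim is trivial), and consider, for arbitrary $a \in F^*$, the ternary form $x^2 - dy^2 - az^2$; by the $C_1^{(2)}$ hypothesis it has a nontrivial zero, and $z = 0$ is impossible since it would force $(x,y) = (0,0)$, so $a = N((x + y\sqrt{d})/z) \in N(K^*)$. For (2), local class field theory applied to a quadratic extension $K/F$ of $p$-adic fields gives the reciprocity isomorphism $F^*/N(K^*) \cong \Gal(K/F) \cong \mathbb Z/2\mathbb Z$.

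For (3), the inclusion $N(K^*) \subseteq F^+$ is immediate from $N(a + b\sqrt{d}) = a^2 + (-d)b^2$, a sum of two non-negative rational functions since $-d \in F^+$. The reverse inclusion when $F = R$ uses that every positive element of a real closed field is a square, hence trivially a norm; for $F = R(X)$, the norm form is the $1$-Pfister form $\langle\langle d \rangle\rangle$, and Proposition \ref{Pf} with $n = 1$ asserts that this form represents every positive $f \in F^+$.

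The main obstacle is (4). Using the isomorphism $F^*/N(K^*) \cong \Br(K/F) \hookrightarrow \Br(F)_2$ sending $a \mapsto [(a,d)_F]$ (valid since $K/F$ is cyclic of degree two), it suffices to exhibit infinitely many distinct Brauer classes of this form. After replacing $d$ by a suitable square multiple (which leaves $K$ unchanged), one may assume $d$ is square-free in $\mathbb C[X,Y]$; fix an irreducible factor $P$ of $d$, so $v_P(d) = 1$. The tame residue map $\partial_P \colon \Br(F)_2 \to \kappa^*/\kappa^{*2}$, where $\kappa$ is the function field of the curve $\{P = 0\}$, sends $[(a,d)_F]$ with $v_P(a) = 0$ to the reduction $\bar a \in \kappa^*/\kappa^{*2}$. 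Since $\kappa$ has transcendence degree one over $\mathbb C$, it admits infinitely many closed-point valuations on its smooth projective model, so $\kappa^*/\kappa^{*2}$ is infinite. Choosing $P$-units $a_i \in F^*$ whose reductions $\bar a_i$ realize infinitely many distinct classes in $\kappa^*/\kappa^{*2}$ then yields infinitely many distinct elements of $F^*/N(K^*)$. The delicate point is the careful invocation of the residue machinery (Faddeev's exact sequence for the Brauer group of a surface function field) and lifting representatives of $\kappa^*/\kappa^{*2}$ to $P$-units in $F$, which can be done by choosing rational functions whose divisors avoid $P$.
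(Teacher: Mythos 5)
Your proposal is correct and follows essentially the same route as the paper: item (1) via $C_1^{(2)}$ and isotropy of the ternary form $x^2-dy^2-az^2$, item (2) as the classical local fact, item (3) via Pfister's result on $1$-Pfister forms, and item (4) by reducing modulo an irreducible factor of $d$ to the function field of a curve over $\mathbb{C}$ and using the infinitude of its square class group. Your Brauer-group/residue-map packaging of (4), together with the square-free normalization of $d$ and the restriction to $P$-units, merely makes explicit the valuation bookkeeping that the paper's statement ``a norm element is mapped to a square in $E$'' leaves implicit.
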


Note that the first item subsumes the case $F=C(X)$ with $C$ algebraically closed.

\begin{proof} If $F$ is a $C_1^{(2)}$-field, then every quadratic form in at least 3 variables represents zero non-trivially, whence every quadratic form in two variables represents every $a\in F^*$. Since $N=\langle 1,d\rangle$ is such a form this proves the first item. The second item is classical, while the third follows from Proposition \ref{Pf} since $\langle 1,d\rangle$ is a $1$-Pfister form. As for the forth item, we may assume that $d\in \mathbb C[X,Y]$. Let $p$ be a prime factor of $d$, and let $E$ be the field of fractions of $\mathbb{C}[X,Y]/(p)$. Then $E$ is a finite field extension of $\mathbb{C}(X)$, and the quotient projection onto $\mathbb{C}[X,Y]/(p)$ defines a map $F\to E$. A norm element $a\in F$ is mapped to a square in $E$. Since the square class group of $\mathbb C(X)$, and hence of $E$, is infinite, the result follows.
\end{proof}

In this context, one may note that a diagonal element $a$ of $X^*X$ for a cocycle $X$ is a sum of $n$ norms, i.e.\ $a=q(x_1,\ldots,x_n)=N(x_1)+\cdots+N(x_n)$. One may ask if the image of the form $q$ always forms a subgroup of $F^*$. This is equivalent to asking whether the form $q$ is \emph{multiplicative}, i.e.\ for each $x,y\in F^n$ there exists $z\in F^n$ such that $q(x)q(y)=q(z)$. If $n=2^m$ for some $m\in\mathbb N$, this form is a Pfister form. The answer is, in general, no when $n$ is not a power of 2, as detailed by the following result due to Pfister (see \cite{PfD}).

\begin{Prp} Let $q$ be a quadratic form in $n$ variables over an arbitrary field $F$ of characteristic zero. Then $q$ is multiplicative if and only if either $q$ is isotropic, or $n=2^m$ for some $m\in\mathbb N$ and $q$ is an $m$-Pfister form.
\end{Prp}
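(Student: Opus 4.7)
My plan is to handle the two implications separately. For the forward direction there are two subcases. If $q$ is isotropic of dimension $n \geq 2$, then $q$ splits off a hyperbolic plane and is therefore universal, so every product $q(x)q(y)\in F$ is trivially represented by $q$. If $q = \langle\langle a_1, \ldots, a_m\rangle\rangle$ is an $m$-Pfister form, I would proceed by induction on $m$, using the tensor decomposition $q = \psi \perp a_m\psi$ with $\psi = \langle\langle a_1, \ldots, a_{m-1}\rangle\rangle$ multiplicative by the inductive hypothesis, together with an explicit Pfister composition identity to express $q(x)q(y)$ as $q(z)$ for a $z$ whose entries are bilinear in those of $x$ and $y$.

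The converse direction assumes $q$ is anisotropic and multiplicative, and aims to conclude that $n = 2^m$ and $q$ is an $m$-Pfister form. First, I would show that the set $D(q)$ of nonzero values represented by $q$ is a subgroup of $F^*$. Closure under multiplication is the multiplicative property itself. To see $1 \in D(q)$, pick any $a = q(x) \in D(q)$; then $a^2 = q(x)q(x) \in D(q)$, say $a^2 = q(w)$, and the homogeneity $q(\lambda v) = \lambda^2 q(v)$ yields $1 = q(a^{-1}w)$. The inverse $a^{-1}$ is obtained analogously by scaling a representation of $a$.

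The main obstacle is then to establish Pfister's theorem that an anisotropic form whose nonzero values form a group must be a Pfister form, and in particular has dimension a power of $2$. I would proceed by induction on $n$: given $q$ anisotropic and multiplicative with $1 \in D(q)$, write $q \cong \langle 1 \rangle \perp q_1$ and pick some $a \in D(q_1)$. The goal is then to show $q \cong \langle 1, a \rangle \otimes q'$ for an anisotropic multiplicative form $q'$ of dimension $n/2$, so that the induction can close. The crux lies in proving this factorization; it rests on the identification $D(q) = G(q)$, where $G(q) = \{a \in F^* : aq \cong q\}$ is the group of similarity factors (one then says $q$ is \emph{round}), and on the Cassels--Pfister subform theorem, which allows one to conclude isomorphism of forms from representation of values. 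Establishing these deeper results is the technical heart of the argument, for which I would follow the original treatment of Pfister in \cite{PfD}.
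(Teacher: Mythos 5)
The paper does not actually prove this proposition; it is quoted as a theorem of Pfister, with a reference to \cite{PfD}, so there is no internal argument to compare with. Judged on its own, your proposal has a genuine gap in the converse direction. You read multiplicativity as a condition on values in $F$ only, and reduce to the claim that an anisotropic form whose nonzero value set $D(q)$ is a subgroup of $F^*$ (or even a round form, $D(q)=G(q)$) must be a Pfister form. That claim is false: over $F=\mathbb{R}$ the anisotropic form $q=\langle 1,1,1\rangle$ has $D(q)=G(q)=\mathbb{R}_{>0}$, a group, yet $q$ is not a Pfister form and its dimension is not a power of $2$. Pfister's theorem requires multiplicativity in the functional sense: $q(X)q(Y)=q(Z)$ with $X,Y$ independent tuples of indeterminates and $Z\in F(X,Y)^n$, equivalently the group/roundness property over every field extension of $F$ (and this is also the sense in which the proposition must be read for the ``only if'' half to be true). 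Consequently your induction cannot be closed from the ground-field hypothesis alone: the standard argument applies the hypothesis generically to get $q(X)\in G_{F(X)}(q)$, and only then invokes the Cassels--Pfister subform theorem and the inductive Pfister factorization. So the right ingredients (roundness, subform theorem) appear in your sketch, but they must be fed the generic, not the rational-point, version of the hypothesis.

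A smaller inaccuracy occurs in the forward direction: for an $m$-Pfister form with $m\geq 4$ there is no composition formula with $z$ bilinear in $x$ and $y$; by Hurwitz's theorem such bilinear compositions exist only in dimensions $1,2,4,8$. Pfister's identities are linear in $y$ but only rational in $x$. Alternatively, deduce multiplicativity directly from roundness: if $q(x),q(y)\neq 0$ then $q(x)\in G(q)$, hence $q(x)q(y)\in D(q)$. Your treatment of the isotropic case is fine.
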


\begin{Rk} Together with Theorem \ref{T2}, this implies that over any field of characteristic zero with quadratic extension $K=F(\sqrt{d})$, if $\g=\su(2,F,d)$, then the set $H_d^1(r_{DJ},F,d)$ is a multiplicative group (in fact, a subgroup of $F^*/N(K^*)$). Moreover, if $\g=\su(2^m,F,d)$ for some $m\in \mathbb N$, then $H_d^1(r_{DJ},F,d)$ contains, in a natural way, such a subgroup. An interesting question is under which conditions $H_d^1(r_{DJ},F,d)$ is then itself a group.
 
\end{Rk}

\section{The General Non-Twisted Case}
We shall now consider Lie bialgebra structures $\delta$ on $\g$ such that $\delta_{\fb}=\partial r$ with $r=\lambda (\Ad_X\otimes\Ad_X)(r_{BD})$, for more general choices of $\lambda$ and $r_{BD}$. As regards $\lambda$, by Proposition \ref{Plambda} we may assume that $\lambda=1$, $\lambda=\sqrt{d}$ or $\lambda=\sqrt{d'}$ for some $d'\in F$. Recall that we refer to the three types as \emph{basic}, \emph{quadratic} and \emph{twisted}, respectively. In this section we will consider the first two cases, and in the next we consider the last. 

As above we fix the diagonal Cartan subalgebra $\h_K$ of $\g_K$ with the above orthonormal basis $\{h_i\}$, root system $\Delta$ and a subset $\Delta^+$ of positive roots. We write $e_{\alpha}$ for the Chevalley generator corresponding to $\alpha\in\Delta$, $\Omega$ for the Casimir element, and $\Omega_0$ for its Cartan part.

We introduce the anti-diagonal matrix $S\in\GL(n,F)$ defined by $S_{ij}=\delta_{i,n+1-j}$, and the map $s:\Gamma\to\Gamma$ defined by $s(\alpha_i)=\alpha_{n-i}$. 

When $r$ is $\GL(n,\fb)$-equivalent to $\lambda r_{BD}$ with $\lambda\in K$, one may assume the equivalence to be effected by some $Y\in\GL(n,K)$ as follows.

\begin{Prp}\label{inK} Let $r=\lambda (\Ad_X\otimes\Ad_X)(r_{BD})$ with $\lambda\in K$, $X\in\GL(n,\fb)$ and $r_{BD}$ a Belavin--Drinfeld $r$-matrix. If $\partial r$ defines a Lie bialgebra structure on $\sll(n,K)$, then
\begin{enumerate}
\item every $\sigma\in\Gal(\fb/K)$ satisfies $\sigma(r_{BD})=r_{BD}$ and $X^{-1}\sigma(X)\in C(r_{BD})$, and
\item $r=\lambda (\Ad_Y\otimes\Ad_Y)(r_{BD})$ for some $Y\in\GL(n,K)$.
\end{enumerate}
\end{Prp}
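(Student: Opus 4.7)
My plan has three main steps: first, show that $r$ is already $\Gal(\fb/K)$-invariant, which turns the proposition into a conjugation identity; second, use the rigidity of the Belavin--Drinfeld decomposition to split this identity into the two assertions of (1); and third, apply Hilbert's Theorem 90 to the resulting cocycle in $C(r_{BD})$ to descend $X$ into $\GL(n,K)$.

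\textbf{Galois-invariance of $r$.} First I would record $r + r^{21} = \lambda\Omega$, which follows from $r_0 + r_0^{21} = \Omega_0$, the antisymmetry of the $\wedge$-summand of $r_1$, and the $\Ad$-invariance of $\Omega$. The argument of Remark \ref{Rlambda} adapted to $\sll(n,K) = \g_K$ shows that $r - \sigma(r) = \beta_\sigma\Omega$ for some $\beta_\sigma \in \fb$, for each $\sigma \in \Gal(\fb/K)$. Applying $\sigma$ to $r + r^{21} = \lambda\Omega$ and using $\sigma(\lambda) = \lambda$, $\sigma(\Omega) = \Omega$ and $\Omega^{21} = \Omega$, subtraction yields $2\beta_\sigma\Omega = 0$, hence $\beta_\sigma = 0$ and $\sigma(r) = r$. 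Setting $g_\sigma = X^{-1}\sigma(X)$, this rearranges to $(\Ad_{g_\sigma}\otimes\Ad_{g_\sigma})(\sigma(r_{BD})) = r_{BD}$.

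\textbf{Rigidity --- the main obstacle.} Writing $r_{BD} = r_0 + r_1$, the summand $r_1$ is a $\mathbb{Q}$-linear combination of tensor products of Chevalley generators $e_{\pm\alpha}$ (which are $\Gal(\fb/F)$-fixed), so $\sigma(r_1) = r_1$ and the conjugation identity becomes $(\Ad_{g_\sigma}\otimes\Ad_{g_\sigma})(r_1) - r_1 = r_0 - (\Ad_{g_\sigma}\otimes\Ad_{g_\sigma})(\sigma(r_0)) \in \h\otimes\h$. I would decompose both sides under the weight grading of $\sll(n,\fb)\otimes\sll(n,\fb)$ with respect to the diagonal $\h_\fb$-action. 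The right-hand side lies in the zero-weight piece $\h\otimes\h$, whereas $r_1$ has non-zero-weight components coming from the $e_\alpha\wedge e_{-\tau^k(\alpha)}$ terms of the admissible triple as well as from the Chevalley part $\sum_\alpha e_\alpha\otimes e_{-\alpha}$. Matching these non-zero-weight components forces $\Ad_{g_\sigma}$ to preserve each root line, hence to lie in the diagonal torus $H$; a further check against the triple-invariance conditions that cut out $C(r_{BD})$ inside $H$ then yields $g_\sigma \in C(r_{BD})$, extending the description of $C(r_{DJ})$ recalled at the end of Section 2. With this in hand, the identity collapses to $\sigma(r_0) = r_0$, so $\sigma(r_{BD}) = r_{BD}$, finishing (1). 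This is the step I expect to be most delicate, since it relies on an explicit rigidity statement for Belavin--Drinfeld $r$-matrices.

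\textbf{Hilbert 90 descent.} Finally, $\sigma \mapsto g_\sigma$ is a $1$-cocycle of $\Gal(\fb/K)$ with values in $C(r_{BD})(\fb)$, which is a split algebraic subtorus of the diagonal maximal torus. By Hilbert's Theorem 90, $H^1(\Gal(\fb/K), C(r_{BD})(\fb)) = 1$, so $g_\sigma = c^{-1}\sigma(c)$ for some $c \in C(r_{BD})(\fb)$. Setting $Y = Xc^{-1}$, one verifies $\sigma(Y) = Y$ for every $\sigma \in \Gal(\fb/K)$, hence $Y \in \GL(n,K)$; since $c \in C(r_{BD})$, we have $(\Ad_Y\otimes\Ad_Y)(r_{BD}) = (\Ad_X\otimes\Ad_X)(r_{BD})$, proving (2).
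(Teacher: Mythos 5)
The paper does not prove this proposition itself: it cites \cite{PS} for item (1) and \cite{KKPS} for item (2), so your attempt should be measured against the toolkit the paper does provide. Your first step is fine: $r+r^{21}=\lambda\Omega$, the argument of Remark \ref{Rlambda} adapted to $\g_K$, and the subtraction giving $\beta_\sigma=0$ reproduce exactly the ``standard computation'' used in the Lemma of Section 3, and $\sigma(r_1)=r_1$ is legitimate because $\h$ and the Chevalley generators are defined over the prime field. The genuine gap is in your rigidity step. You assert that $r_0-(\Ad_{g_\sigma}\otimes\Ad_{g_\sigma})(\sigma(r_0))$ lies in $\h\otimes\h$, but that already presupposes that $\Ad_{g_\sigma}$ preserves $\h\otimes\h$, which is essentially what you are trying to prove; similarly, ``matching non-zero-weight components'' of $(\Ad_{g_\sigma}\otimes\Ad_{g_\sigma})(r_1)$ is not available, since the weight decomposition of that term is inaccessible until you know $g_\sigma$ normalizes the torus. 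The circularity is not cosmetic: the entire content of the statement is that a conjugation identity of this shape forces diagonality. The non-circular mechanism is the one in the paper's Lemma \ref{lcartan} (generalizing Theorem 3 of \cite{4046}): pass to the operator $\Phi(r_{BD})\in\End(\g)$, recover the Borel subalgebras intrinsically as sums of generalized eigenspaces, conclude that $\Ad_{g_\sigma}$ preserves both Borels and hence that $g_\sigma$ is diagonal, and only then compare Cartan and root parts. Applying that lemma with $Y=g_\sigma$ and $r_0'=\sigma(r_0)\in\h_{\fb}\otimes\h_{\fb}$ yields at once $\sigma(r_0)=r_0$, hence $\sigma(r_{BD})=r_{BD}$, and $g_\sigma\in C(r_{BD})$, i.e.\ your item (1); your own sketch does not get there.

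Your Hilbert 90 step also needs care beyond the Drinfeld--Jimbo case. For $r_{DJ}$ the centralizer is the full diagonal torus, which is split, so $H^1(\Gal(\fb/K),C(r_{DJ})(\fb))=1$ and the descent $X=Yc$ works; this is in effect Lemma 2 of \cite{4046}, which the paper invokes in Section 3. For a general admissible triple, however, $C(r_{BD})$ is the subgroup of the diagonal torus cut out by the character equations coming from $\tau$; it is diagonalizable but you must verify it is connected (equivalently, that the sublattice generated by those characters is saturated) before calling it a ``split subtorus'' and quoting Hilbert 90, since $H^1$ of a non-connected group of multiplicative type need not vanish. This is the kind of point settled in \cite{KKPS}, the reference the paper gives for item (2). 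In short: your skeleton (Galois invariance, rigidity, torus descent) is the right one and matches the cited literature, but the rigidity step must be replaced by an appeal to Lemma \ref{lcartan} or an equivalent argument, and the cohomological vanishing needs justification for non-trivial triples.
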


A proof of the first item is given in \cite{PS}, while the second is established in \cite{KKPS}.

The following result will be helpful when determining the possible choices of $X\in\GL(n,\fb)$. As in Theorem \ref{TBD} we decompose $r_{BD}$ as
\[r_{BD}=r_0+r_1.\]
Since $r_0+r_0^{21}=\Omega_0$ we have $r_0=r_s+\frac{1}{2}\Omega_0$ with $r_s$ skewsymmetric in the sense that $r_s+r_s^{21}=0$.

\begin{Lma}\label{lcartan} Let $r_{BD}=r_0+r_1$ be a Belavin--Drinfeld $r$-matrix and $Y\in\GL(n\fb)$. If $\sigma\in\Gal(\fb/F)$ satisfies
\[(\Ad_Y\otimes\Ad_Y)(r_0'+r_1)=r_{BD}\]
for some $r_0'\in\h_{\fb}\otimes\h_{\fb}$, then $r_0'=r_0$ and $Y\in C(r_{BD})$.
\end{Lma}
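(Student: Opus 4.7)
The plan is to reduce everything to the uniqueness part of the Belavin--Drinfeld classification: once $r_0'+r_1$ is shown to be itself a Belavin--Drinfeld $r$-matrix with the same admissible triple as $r_{BD}$, the hypothesis exhibits it as $\Ad$-equivalent to $r_{BD}$, and BD uniqueness then forces the two to coincide.

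First I would symmetrize. Adding the $21$-flip of the hypothesis to itself, using that $\Ad_Y\otimes\Ad_Y$ commutes with the flip, together with the identities $r_{BD}+r_{BD}^{21}=\Omega$ and $r_1+r_1^{21}=\Omega-\Omega_0$ (the antisymmetric $\tau$-contributions in $r_1$ cancel under the flip, while $\sum_{\alpha\in\Delta^+}(e_\alpha\otimes e_{-\alpha}+e_{-\alpha}\otimes e_\alpha)$ recovers the off-Cartan part of the Casimir), we obtain $(\Ad_Y\otimes\Ad_Y)\bigl((r_0'+(r_0')^{21})+(\Omega-\Omega_0)\bigr)=\Omega$. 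Invoking the $\g$-invariance of $\Omega$ and the injectivity of $\Ad_Y\otimes\Ad_Y$, this collapses to $r_0'+(r_0')^{21}=\Omega_0$. Setting $w:=r_0'-r_0\in\h_{\fb}\otimes\h_{\fb}$, we see $w$ is skew-symmetric, and the hypothesis rewrites as $(\Ad_Y\otimes\Ad_Y)(r_{BD}+w)=r_{BD}$.

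Second, I would use the classical Yang--Baxter equation. Since $\Ad_Y$ is a Lie algebra automorphism, $\mathrm{CYB}$ is equivariant under $\Ad_Y^{\otimes 3}$, so $r_{BD}+w$ satisfies $\mathrm{CYB}(r_{BD}+w)=0$. Using $\mathrm{CYB}(r_{BD})=0$ together with the abelianness of $\h$ (which forces $\mathrm{CYB}(w)=0$ and kills the cross-bracket terms between $w$ and $r_0$), this distills to $T(w,r_1)=0$, where $T(w,r_1)$ denotes the sum of the six cross commutators arising in the expansion of $\mathrm{CYB}(r_{BD}+w)$. A direct commutator calculation shows that the contribution to $T$ of each summand $e_\alpha\otimes e_{-\alpha}$ of $r_1$ vanishes identically for skew $w$ (the six resulting terms pair off and cancel), so the condition reduces to $T(w,r_1^{(2)})=0$ for the $\tau$-sum $r_1^{(2)}=\sum e_\alpha\wedge e_{-\tau^k(\alpha)}$. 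Reading this off term-by-term amounts to the Belavin--Drinfeld null condition $(\tau(\alpha)\otimes\Id+\Id\otimes\alpha)(r_0')=0$ for $\alpha\in\Gamma_1$, so that $r_0'+r_1$ is itself a Belavin--Drinfeld $r$-matrix with admissible triple $(\Gamma_1,\Gamma_2,\tau)$.

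Third, I would invoke uniqueness. At this point the hypothesis reads: the two Belavin--Drinfeld $r$-matrices $r_0+r_1$ and $r_0'+r_1$, sharing the same admissible triple, are related by $\Ad_Y\otimes\Ad_Y$. Since distinct Cartan parts yield distinct $\Ad_{\GL}$-orbits in the Belavin--Drinfeld classification \cite{BD}, this forces $r_0'=r_0$. Substituting back into the original equation gives $(\Ad_Y\otimes\Ad_Y)(r_{BD})=r_{BD}$, that is, $Y\in C(r_{BD})$.

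The hard part will be the passage in the second step from the algebraic identity $T(w,r_1^{(2)})=0$ to the null condition on $r_0'$, together with pinning down in the third step the precise form of BD uniqueness that rules out equivalent representatives with distinct $r_0$. In the Drinfeld--Jimbo special case $\Gamma_1=\Gamma_2=\emptyset$ both issues evaporate, and a cleaner direct argument is available: the rank of the sharp map $(r_{BD}-\tfrac{1}{2}\Omega)^\sharp:\g_{\fb}^*\to\g_{\fb}$ is an $\Ad_Y$-invariant, and adding any nonzero skew $w\in\Lambda^2\h_{\fb}$ strictly enlarges the Cartan component of its image, thereby forcing $w=0$.
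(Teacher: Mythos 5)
Your steps 1 and 2 are sound in content: the symmetrization argument correctly shows $r_0'+(r_0')^{21}=\Omega_0$, so $w=r_0'-r_0$ is skew, and the CYB cross-term computation (the $e_\alpha\otimes e_{-\alpha}$ contributions do cancel for skew $w$, and the wedge contributions force $((\alpha-\tau^k\alpha)\otimes\Id)(w)=0$) does show that $r_0'$ again satisfies the Belavin--Drinfeld null condition. The genuine gap is your third step. The ``uniqueness'' you invoke --- that for a fixed admissible triple distinct Cartan parts $r_0$ lie in distinct $\Ad_{\GL_n}$-orbits --- is not part of the classification as available here (Theorem 2.1 asserts only the existence of the normal form $r_0+r_1$), and it is essentially the statement of the Lemma itself (which generalizes Theorem 3 of \cite{4046}). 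So the argument is circular unless you actually prove that separation statement; moreover you also need $Y\in C(r_{BD})$, which again requires controlling $Y$, not just the equality of normal forms. Your fallback rank argument via $(r_{BD}-\tfrac12\Omega)^\sharp$ only works when the skew Cartan part of $r_{BD}$ vanishes (the Drinfeld--Jimbo case), whereas the paper later applies the Lemma to $r_{BD}=r_s+r_{DJ}$ with $r_s\neq 0$ and to general triples, so it does not close the gap.

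The paper's proof supplies precisely the missing ingredient, by a different mechanism: applying $\Phi(a\otimes b)=\kappa(a,\cdot)b$ and taking generalized eigenspaces of $\Phi(r_{BD})$, one recovers the two Borel subalgebras $\mathfrak b_\pm$ intrinsically from the $r$-matrix, and this recovery is unchanged if the Cartan part is perturbed inside $\h_{\overline{F}}\otimes\h_{\overline{F}}$. Hence the hypothesis forces $\Ad_Y$ to preserve both Borels, so $Y$ is diagonal and $\Ad_Y\otimes\Ad_Y$ fixes $\h_{\overline{F}}\otimes\h_{\overline{F}}$ pointwise; comparing the Cartan and root-space components of the two sides then yields $r_0'=r_0$ and $Y\in C(r_{BD})$ in one stroke, with no appeal to any uniqueness clause in the Belavin--Drinfeld theorem. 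If you want to salvage your route, you must replace the citation in step 3 by an argument of this kind (or otherwise prove directly that no $Y$ can move a nonzero skew $w$ into the root part), which is where all the real work lies.
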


This generalizes Theorem 3 of \cite{4046}, and the proof is analogous; we give a brief sketch of it here.

\begin{proof} We extend scalars to $\fb$ and apply the map $\Phi:\g\otimes\g\to\End(\g)$ defined via $\Phi(a\otimes b)=\kappa(a,u)b$, where $\kappa$ is the killing form. Consider, for each $\lambda \in \fb$ the generalized eigenspace
\[\g^\lambda=\bigcup_{n>0}\Ker(\Phi(r_{BD})-\lambda)^n,\]
and set
\[\g'=\bigoplus_{\lambda\notin \{0,1\}}g^\lambda.\]
One then finds that $\g'+\g^0=\mathfrak b_-$, and $\g'+\g^1=\mathfrak b_+$, the Borel subalgebras of $\g_{\fb}$. Since $r_0'+r_1$ only differs from $r_{BD}$ by a term in $\h_{\fb}\otimes\h_{\fb}$, the same holds for that $r$-matrix. Thus $\Ad_Y$ preserves the Borel subalgebras, whence $Y$ is diagonal and $\Ad_Y$ leaves $\h_{\fb}$ invariant. The result follows upon inspecting the diagonal and off-diagonal parts of the left and right hand sides of the equation.
\end{proof}

\subsection{Non-DJ-Lie bialgebra Structures of Quadratic Type}
One may ask if there exist other Belavin--Drinfeld $r$-matrices $r_{BD}\neq r_{DJ}$ such that $\partial(\sqrt{d} r_{BD})$ induces a Lie bialgebra structure on $\g$. The following result severely limits the possibilities.

\begin{Prp}\label{onlydj} Let $r_{BD}$ be a Belavin--Drinfeld $r$-matrix associated to the admissible triple $(\Gamma_1,\Gamma_2,\tau)$. If $\delta$ is a Lie bialgebra structure on $\g$ such that $\delta_{{\fb}}$ is gauge equivalent to $\partial(\sqrt{d} r_{BD})$ over ${\fb}$, then $\Gamma_1=\Gamma_2=\emptyset$. 
\end{Prp}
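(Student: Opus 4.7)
The plan is to turn the condition from Remark~\ref{Rlambda} into a concrete matrix identity relating $r_{BD}$ to $r_{BD}^*$, and then apply the rigidity of Belavin--Drinfeld $r$-matrices (in the spirit of Lemma~\ref{lcartan}) to force the admissible triple to be empty.

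First, by Proposition~\ref{inK} I may take $X\in\GL(n,K)$ and write $r=\sqrt{d}\,(\Ad_X\otimes\Ad_X)(r_{BD})$. Combining Remark~\ref{Rlambda} with $r+r^{21}=\sqrt{d}\,\Omega$ and Lemma~\ref{LK}, as in the proof of Proposition~\ref{Plambda}, gives $r^*=r-\sqrt{d}\,\Omega$. Using $(\sqrt{d})^*=-\sqrt{d}$ and the identity $(\Ad_X(A))^*=\Ad_{(X^*)^{-1}}(A^*)$ for $A\in\gl(n,K)$, this rearranges to
\[(\Ad_Z\otimes\Ad_Z)(r_{BD}^{21})=r_{BD}^*,\qquad Z:=X^*X\in\GL(n,K).\]

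The key step is to rewrite both sides as standard BD $r$-matrices. Let $\theta$ be the Chevalley involution of $\sll(n,\fb)$, sending $h_i\mapsto -h_i$ and $e_\alpha\mapsto -e_{-\alpha}$, and let $\Theta(g)=(g^T)^{-1}$ so that $\theta\circ\Ad_g=\Ad_{\Theta(g)}\circ\theta$. A term-by-term calculation will show that $U:=(\theta\otimes\theta)(r_{BD}^{21})$ is the BD $r$-matrix for the admissible triple $(\Gamma_2,\Gamma_1,\tau^{-1})$ with Cartan part $r_0^{21}$, while $V:=(\theta\otimes\theta)(r_{BD}^*)$ is the BD $r$-matrix for the triple $(\Gamma_1,\Gamma_2,\tau)$ with Cartan part $r_0^*$; the swapped compatibility condition for $r_0^{21}$ follows by flipping the tensor factors in the original one, and the condition for $r_0^*$ follows from the integrality of $\alpha(h_i)$ together with conjugation of coefficients. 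The identity then becomes $(\Ad_{\Theta(Z)}\otimes\Ad_{\Theta(Z)})(U)=V$.

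At this point I invoke the Borel-preservation argument from the proof of Lemma~\ref{lcartan}: $\Phi(U)$ and $\Phi(V)$ each have $\mathfrak{b}_+$ and $\mathfrak{b}_-$ as (sums of) generalized eigenspaces, so the equation forces $\Ad_{\Theta(Z)}$ to preserve both Borels, meaning $\Theta(Z)$ is diagonal. Then $\Ad_{\Theta(Z)}\otimes\Ad_{\Theta(Z)}$ scales each root-space component by a non-zero scalar without mixing them, and the equation holds root-space by root-space. If $\Gamma_1\neq\emptyset$, pick $\alpha\in\Gamma_1$ and inspect the root-space $(\alpha,-\tau(\alpha))$: in $V$ it receives coefficient $1$ from $e_\alpha\wedge e_{-\tau(\alpha)}$, whereas in $U$ it can only be reached if $\alpha=\tau(\alpha)$ (from $\sum_\beta e_\beta\otimes e_{-\beta}$) or if $\tau^{k+1}(\alpha)=\alpha$ for some $k\geq 1$ (from the cross sum), both excluded by the nilpotency of $\tau$. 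Matching yields $0=1$, a contradiction, so $\Gamma_1=\Gamma_2=\emptyset$.

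The main obstacle is the bookkeeping in the middle step: verifying carefully that after applying $*$ and $\theta\otimes\theta$, the resulting $U$ and $V$ are genuine BD $r$-matrices of the admissible triples claimed, so that the Borel-preservation argument applies. Once $\Theta(Z)$ is shown to be diagonal, the final root-space contradiction is mechanical, hinging only on the non-periodicity of $\tau$-orbits guaranteed by the admissibility condition.
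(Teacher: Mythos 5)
Your proposal is correct and follows essentially the same route as the paper: reduce to $X\in\GL(n,K)$ via Proposition~\ref{inK}, derive the identity $(\Ad_{X^*X}\otimes\Ad_{X^*X})(r_{BD}^{21})=r_{BD}^*$ from $r^*=r-\sqrt{d}\,\Omega$, and then use the non-periodicity of $\tau$ in an admissible triple to rule out $\Gamma_1\neq\emptyset$. Your Chevalley-involution detour merely makes explicit the justification (diagonality of the conjugating element and root-space support matching, in the spirit of Lemma~\ref{lcartan}) that the paper leaves implicit when it asserts that gauge equivalence of $r_{BD}^{21}$ and $r_{BD}^*$ forces $\tau$-orbits to stay inside $\Gamma_1$.
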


The proof mimics a technique used in \cite{4046}.

\begin{proof} Set $\delta_{\fb}=\partial r$ with $r=\sqrt{d}(\Ad_X\otimes\Ad_X)(r_{BD})$. By Proposition \ref{inK} we may assume that $X\in\GL(n,K)$. Then \eqref{rstar} together with the fact that $r+r_{21}=\sqrt{d}\Omega$ and $\overline{\sqrt{d}}=-\sqrt{d}$ imply that
\[(\Ad_{X^*X}\otimes\Ad_{X^*X})(r_{BD})=\Omega-r_{BD}^*,\]
and applying the map $s\mapsto s^{21}$ to both sides gives
\begin{equation}\label{djcartan}
(\Ad_{X^*X}\otimes\Ad_{X^*X})(r_{BD}^{21})=r_{BD}^*, 
\end{equation}
where on the right hand side we have used $r_{BD}+r_{BD}^{21}=\Omega$.
Now, with the same notation as in Theorem \ref{TBD},
\[r_{BD}=r_0+\sum_{\alpha\in\Delta^+} e_{\alpha}\otimes e_{-\alpha}+\sum_{\alpha\in\mathrm{Span}(\Gamma_1)^+}\sum_{k\in\mathbb N} e_{\alpha}\wedge e_{-\tau^k(\alpha)}\]
whence
\[r_{BD}^*=\overline{r_0}+\sum_{\alpha\in\Delta^+} e_{-\alpha}\otimes e_{\alpha}+\sum_{\alpha\in\mathrm{Span}(\Gamma_1)^+}\sum_{k\in\mathbb N} e_{-\alpha}\wedge e_{\tau^k(\alpha)}\]
while 
\[r_{BD}^{21}=r_0^{21}+\sum_{\alpha\in\Delta^+} e_{-\alpha}\otimes e_{\alpha}+\sum_{\alpha\in\mathrm{Span}(\Gamma_1)^+}\sum_{k\in\mathbb N} e_{-\tau^k(\alpha)}\wedge e_{\alpha}.\]
Since $r_{BD}^{21}$ is gauge equivalent to $r_{BD}^*$ this implies that for each $\alpha(\Gamma_1)$ and $k\in\mathbb N$ there is $\alpha'\in(\Gamma_1)$ with $\tau^k(\alpha)=\alpha'$. Since for some $k$ we have $\tau^k(\alpha)\in\Gamma_2\setminus\Gamma_1$ by definition of an admissible triple, this implies that $\Gamma_1=\Gamma_2=\emptyset$ as desired.
 \end{proof}

Thus $r_{BD}=r_s+r_{DJ}$, where $r_s\in\h_K\wedge\h_K$. Thus \eqref{djcartan} implies
\[(\Ad_{X^*X}\otimes\Ad_{X^*X})(-r_s+r_{DJ}^{21})=\overline{r_s}+r_{DJ}^{21},\]
whence applying the mapping $s\mapsto s^{21}$ and rearranging terms we get
\[(\Ad_{(X^*X)^{-1}}\otimes\Ad_{(X^*X)^{-1}})(-\overline{r_s}+r_{DJ})={r_s}+r_{DJ},\]
whence Lemma \ref{lcartan} gives $\overline{r_s}=-r_s$ and $X^*X\in C(r_{DJ})$.  Therefore, we have the following.

\begin{Cor} Assume that a Lie bialgebra structure $\delta$ on $\g$ satisfies $\delta_K=\partial r$ with $r=\sqrt{d}(\Ad_X\otimes\Ad_X)(r_{BD})$ for some $X\in\GL(n,K)$. Then $X^*X=D_X$ for some diagonal $D\in\GL(n,F)$ and $r_{BD}=r_s+r_{DJ}$ for some $r_s\in(\h_K\wedge\h_K)$ with $\overline{r_s}=-r_s$.
\end{Cor}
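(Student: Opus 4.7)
The plan is to assemble the ingredients that have just been set up in the text, rather than start from scratch. The first move is to apply Proposition \ref{onlydj}: since $\delta_{\fb}$ is gauge equivalent to $\partial(\sqrt{d}\,r_{BD})$, the admissible triple must satisfy $\Gamma_1=\Gamma_2=\emptyset$, which collapses the Belavin--Drinfeld sum to $r_{BD}=r_0+\sum_{\alpha\in\Delta^+}e_\alpha\otimes e_{-\alpha}$ with $r_0\in\h_K\otimes\h_K$. Using $r_0+r_0^{21}=\Omega_0$, I would split $r_0=r_s+\tfrac12\Omega_0$ with $r_s\in\h_K\wedge\h_K$ skewsymmetric, and rewrite this as $r_{BD}=r_s+r_{DJ}$; this is the decomposition that appears in the conclusion.

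Next I would exploit the identity \eqref{djcartan}, already derived just above the corollary, namely
\[(\Ad_{X^*X}\otimes\Ad_{X^*X})(r_{BD}^{21})=r_{BD}^*.\]
The key routine computation is to verify $r_{DJ}^*=r_{DJ}^{21}$ (since $e_{\pm\alpha}^*=e_{\mp\alpha}$ and $h_i^*=h_i$) and to observe that $*$ acts on the Cartan piece only by coefficient conjugation, so $r_s^*=\overline{r_s}$. Substituting in, the equation reads
\[(\Ad_{X^*X}\otimes\Ad_{X^*X})(-r_s+r_{DJ}^{21})=\overline{r_s}+r_{DJ}^{21}.\]
Applying the flip $s\mapsto s^{21}$ and then $(\Ad_{(X^*X)^{-1}}\otimes\Ad_{(X^*X)^{-1}})$ on both sides puts the equation into the form
\[(\Ad_{(X^*X)^{-1}}\otimes\Ad_{(X^*X)^{-1}})(-\overline{r_s}+r_{DJ})=r_s+r_{DJ},\]
which is precisely the hypothesis of Lemma \ref{lcartan} with $Y=(X^*X)^{-1}$ and the ``new Cartan part'' being $-\overline{r_s}+\tfrac12\Omega_0$.

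Invoking Lemma \ref{lcartan} then yields simultaneously $-\overline{r_s}=r_s$, i.e.\ $\overline{r_s}=-r_s$, and $(X^*X)^{-1}\in C(r_{BD})$. The centralizer proposition for $r_{DJ}$, combined with the fact that $r_s\in\h_K\otimes\h_K$ already lies in the diagonal part, gives $C(r_{BD})\subseteq C(r_{DJ})=\{\text{diagonal}\}$, so $X^*X$ is diagonal. To get that its entries lie in $F$, I would use the calculation already performed in Theorem \ref{T1}, $(X^*X)_{ii}=\sum_k N(X_{ki})\in F$, so $X^*X=D_X$ with $D_X\in\GL(n,F)$ diagonal.

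The main bookkeeping obstacle is keeping the three involutions $*$, $(\cdot)^{21}$ and coefficient conjugation $\overline{\phantom{x}}$ straight as they act on the mixed expression $r_s+r_{DJ}$; in particular one needs that $*$ preserves $\Omega_0$ and $\tfrac12\Omega_0$ cleanly separates from the skew part, so that the Cartan terms recombine in a way compatible with the hypothesis of Lemma \ref{lcartan}. Once that alignment is verified, the corollary follows immediately by collecting the two conclusions $\overline{r_s}=-r_s$ and $X^*X\in C(r_{DJ})\cap\gl(n,F)$.
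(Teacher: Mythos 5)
Your proposal is correct and follows essentially the same route as the paper: Proposition \ref{onlydj} to kill the admissible triple, the decomposition $r_0=r_s+\tfrac12\Omega_0$, substitution into \eqref{djcartan} using $r_{DJ}^*=r_{DJ}^{21}$ and $r_s^*=\overline{r_s}$, the flip and inverse adjoint action, and then Lemma \ref{lcartan} with $Y=(X^*X)^{-1}$ to conclude $\overline{r_s}=-r_s$ and $X^*X$ diagonal, with the $F$-rationality of the diagonal entries coming from $(X^*X)_{ii}=\sum_k N(X_{ki})$ as in Theorem \ref{T1}. No discrepancies worth noting.
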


Note that any quadratic Lie bialgebra structure $\delta$ on $\g$ satisfies $\delta_K=\partial r$ with $r$ as above, and that any such $r$ defines a quadratic Lie bialgebra structure on $\g$.

\subsection{Lie Bialgebra Structures of Basic Type}
We shall now consider the case where $\delta_{\fb}=\partial r$, where $r=\lambda (\Ad_X\otimes\Ad_X)(r_{BD})$ with $\lambda\in F^*$. We may thus assume that $\lambda=1$ and, by Proposition \ref{inK}, that $X\in\GL(n,K)$.

\begin{Prp}\label{pbd} Let $r=(\Ad_X\otimes\Ad_X)(r_{BD})$ for some Belavin--Drinfeld $r$-matrix $r_{BD}=r_0+r_1$ over $K$ and $X\in \GL(n,K)$. Then $\partial r$ defines a Lie bialgebra structure on $\g$ if and only if $X^*X=SD$ for some $D\in C(r_{BD})$, $\overline{r_0}=(\Ad_S\otimes\Ad_S)(r_0)$, and the corresponding admissible triple $(\Gamma_1,\Gamma_2,\tau)$ satisfies $s(\Gamma_i)=\Gamma_i$ for $i\in\{1,2\}$ and $s\tau=\tau s$.
\end{Prp}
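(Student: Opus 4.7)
Since $X\in\GL(n,K)$ and $r_{BD}$ is defined over $K$, Remark~\ref{Rlambda} reduces the question to the single $\ast$-condition $r-r^*\in\fb\Omega$. From $\lambda=1$ we have $r+r^{21}=\Omega$ and similarly $r^*+(r^*)^{21}=\Omega^*=\Omega$ (using $h_i^*=h_i$ and $e_\alpha^*=e_{-\alpha}$ for the matrix-unit Chevalley generators), so $r-r^*$ is $21$-skew; being also $K$-proportional to the $21$-symmetric tensor $\Omega$, it must vanish. Thus the condition becomes $r=r^*$, which unpacks to
\[
(\Ad_{X^*X}\otimes\Ad_{X^*X})(r_{BD})=r_{BD}^*. \qquad (\star)
\]

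The key auxiliary observation is that $(\Ad_S\otimes\Ad_S)(r_{BD}^*)$ is itself a Belavin--Drinfeld $r$-matrix. Since $\Ad_S(E_{ij})=E_{n+1-i,n+1-j}$ yields $\Ad_S(e_\alpha)=e_{-s(\alpha)}$ on root vectors, a termwise computation followed by the reindexing $\beta=s(\alpha)$ shows that $(\Ad_S\otimes\Ad_S)(r_{BD}^*)=r_{BD}'$, where $r_{BD}'$ is the BD $r$-matrix with Cartan part $(\Ad_S\otimes\Ad_S)(\overline{r_0})$ and admissible triple $(s\Gamma_1,s\Gamma_2,s\tau s^{-1})$. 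Hence $(\star)$ is equivalent to
\[
(\Ad_{SX^*X}\otimes\Ad_{SX^*X})(r_{BD})=r_{BD}'. \qquad (\star')
\]
For the forward direction, the three hypothesized conditions state precisely that $r_{BD}'=r_{BD}$ (Cartan parts and admissible triples coincide) and $D\in C(r_{BD})$; since then $SX^*X=S(SD)=D$, the left side of $(\star')$ equals $(\Ad_D\otimes\Ad_D)(r_{BD})=r_{BD}=r_{BD}'$, and $(\star')$ holds.

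For the converse I will adapt the eigenspace technique from the proof of Lemma~\ref{lcartan}. Both $r_{BD}$ and $r_{BD}'$ have the same dominant piece $\sum_\alpha e_\alpha\otimes e_{-\alpha}$, so the associated operators $\Phi$ share the same generalized eigenspace decomposition, namely the Borel subalgebras $\mathfrak{b}_\pm$. Consequently $\Ad_Y$ with $Y:=SX^*X$ preserves both Borel subalgebras, which forces $Y$ into their common normalizer, the maximal torus of diagonal matrices. Writing $Y=D$ gives $X^*X=SD$ with $D$ diagonal, and $(\star')$ reads $(\Ad_D\otimes\Ad_D)(r_{BD})=r_{BD}'$. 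Since $\Ad_D$ fixes $\h$ pointwise and scales each root space individually, comparing the two sides yields: the Cartan parts give $r_0=(\Ad_S\otimes\Ad_S)(\overline{r_0})$; matching the supports of the admissible parts forces $s(\Gamma_i)=\Gamma_i$ and $s\tau=\tau s$; and matching the admissible coefficients, all equal to $1$, yields $\alpha(D)=\tau^k(\alpha)(D)$ for every admissible pair, i.e.\ $D\in C(r_{BD})$. The main obstacle is this Borel-eigenspace step: one must verify that the spectral analysis from the proof of Lemma~\ref{lcartan}, stated there for a single BD $r$-matrix, remains valid simultaneously for $r_{BD}$ and $r_{BD}'$ despite their differing admissible triples, which hinges on the admissible corrections being a nilpotent perturbation that does not alter the generalized eigenspaces.
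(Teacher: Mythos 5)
Your proposal is correct and follows the same overall strategy as the paper: reduce to the condition $r=r^*$ (your ``$21$-skew versus proportional to $\Omega$'' argument is a legitimate substitute for the paper's reference to the proof of Proposition~\ref{Plambda}), rewrite it as $(\Ad_{X^*X}\otimes\Ad_{X^*X})(r_{BD})=r_{BD}^*$, conjugate by $\Ad_S\otimes\Ad_S$, and identify $(\Ad_S\otimes\Ad_S)(r_{BD}^*)$ as a Belavin--Drinfeld matrix with Cartan part $(\Ad_S\otimes\Ad_S)(\overline{r_0})$ and triple $(s\Gamma_1,s\Gamma_2,s\tau s^{-1})$; your forward direction coincides with the paper's. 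The one genuine difference is the organization of the converse. The paper first extracts the combinatorial conditions $s(\Gamma_i)=\Gamma_i$, $s\tau=\tau s$ by the support-matching argument of Proposition~\ref{onlydj}, so that the two sides have the \emph{same} non-Cartan part $r_1$, and only then invokes Lemma~\ref{lcartan} verbatim (with $Y=(SX^*X)^{-1}$) to get $r_0'=r_0$ and $SX^*X\in C(r_{BD})$. You instead want to first force $Y=SX^*X$ to be diagonal by running the spectral argument of Lemma~\ref{lcartan} for two BD matrices with \emph{a priori} different triples, and you correctly flag this as the step needing justification. It can be completed: the spectral description used in the proof of Lemma~\ref{lcartan} recovers the Borel subalgebras $\mathfrak{b}_\pm$ from $\Phi(r)$ in a way that depends only on $r$ being in BD form with respect to the fixed Cartan subalgebra and positive system (the admissible terms act nilpotently between root spaces and the Cartan part preserves $\h_{\fb}$), hence it applies to $r_{BD}'$ as well; since $\Phi(r_{BD}')=\Ad_Y\circ\Phi(r_{BD})\circ\Ad_Y^{-1}$, the operator $\Ad_Y$ preserves both Borels and is therefore diagonal. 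Note, though, that your phrase ``share the same generalized eigenspace decomposition'' is slightly too strong -- the individual generalized eigenspaces may differ in their $\h$-components because the Cartan parts differ; what is the same for both operators is the pair of subspaces recovered as $\mathfrak{b}_\pm$, and that is all you need. So there is no gap in substance, but if you want to avoid proving this extension, the paper's order of steps (triples first, then Lemma~\ref{lcartan} with identical $r_1$) does so.
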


\begin{proof} As in the proof of Proposition \ref{Plambda} we find that $r$ defines a Lie bialgebra structure on $\g$ if and only if $r=r^*$, i.e.\
\[(\Ad_{X^*X}\otimes\Ad_{X^*X})(r_{BD})=r_{BD}^*\]
and applying $\Ad_S\otimes\Ad_S$ to both sides this is equivalent to 
\begin{equation}\label{sbd}
(\Ad_{SX^*X}\otimes\Ad_{SX^*X})(r_{BD})=(\Ad_S\otimes\Ad_S)(r_{BD}^*). 
\end{equation}
Now,
\[r_{BD}=r_0+\sum_{\alpha\in\Delta^+} e_{\alpha}\otimes e_{-\alpha}+\sum_{\alpha\in\mathrm{Span}(\Gamma_1)^+}\sum_{k\in\mathbb N} e_{\alpha}\wedge e_{-\tau^k(\alpha)}.\]
Upon noticing that $\Ad_S(e_{\alpha})=e_{-s(\alpha)}$ for any $\alpha\in\Gamma$, and following the proof of Proposition \ref{onlydj}, we get
\[(\Ad_S\otimes\Ad_S)(r_{BD}^*)=r_0'+\sum_{\alpha\in\Delta^+} e_{s(\alpha)}\otimes e_{-s(\alpha)}+\sum_{\alpha\in\mathrm{Span}(\Gamma_1)^+}\sum_{k\in\mathbb N} e_{s(\alpha)}\wedge e_{-s\tau^k(\alpha)},\]
where $r_0'=\Ad_S\otimes\Ad_S(\overline{r_0})\in \h_K\otimes\h_K$.
If, as in \eqref{sbd}, these are gauge equivalent, we may proceed as in the proof of Proposition \ref{onlydj}. We then find that for each $\alpha\in\Gamma_1$ and $k\in\mathbb N$ there is $\alpha'\in\Gamma_1$ with $s(\alpha)=\alpha'$ and $\tau^k(\alpha')=s\tau^k(\alpha)$. This implies that $s\tau=\tau s$ and $s(\Gamma_i)=\Gamma_i$ for $i=1,2$.

Furthermore, if this holds, then \eqref{sbd} becomes
\[(\Ad_{SX^*X}\otimes\Ad_{SX^*X})(r_{BD})=r_0'+r_1.\]
Lemma \ref{lcartan} with $Y=(SX^*X)^{-1}$ then gives $Y\in C(r_{BD})$ and $r_0'=r_0$. If conversely $X$, $r_0$, $r_0'$ and the admissible triple satisfy these conditions, then \eqref{sbd} holds. This completes the proof.
\end{proof}

\begin{Def} Let $r_{BD}$ be a Belavin--Drinfeld $r$-matrix with associated admissible triple $(\Gamma_1,\Gamma_2,\tau)$ satisfying the conditions of Proposition \ref{pbd}. We say that \linebreak $X\in \GL(n,K)$ is an \emph{anti-diagonal type Belavin--Drinfeld cocycle} if $X^*X=SD_X$ for some $D_X\in C(r_{BD})$. The set of cocycles is denoted by $Z_a(r_{BD}, F, d)$. Two cocycles $X$ and $Y$ are \emph{cohomologous} if $Y=QXD$ for some  $Q \in\U(n,d)(F)$ and diagonal $D\in \GL(n,K)$. The set of cohomology classes is denoted by $H_a^1(r_{BD},F,d)$. If $c\in H_a^1(r_{BD},F,d)$ is a cohomology class and $D\in \GL(n,K)$ is diagonal and satisfies $D=D_X$ for some $X\in c$, we say that $D$ \emph{represents} $c$.
\end{Def}

As in the case of diagonal type cocycles, the assignment $X\mapsto \partial(\Ad_X\otimes\Ad_X)(r)$ defines a map $\mathcal F'$ from $Z_a(r_{BD}, F, d)$ to the class of all basic type Lie bialgebra structures on $\g$. It then follows from the definition that $X,Y\in Z_a(r_{BD}, F, d)$ are cohomologous if and only if $\mathcal F'(X)$ and $\mathcal F'(Y)$ are equivalent over $F$. This proves the following.

\begin{Prp} Assume that $r_{BD}$ is a Belavin--Drinfeld $r$-matrix with associated admissible triple $(\Gamma_1,\Gamma_2,\tau)$ satisfying the conditions of Proposition \ref{pbd}. Then there is a one-to-one correspondence between $H_a^1(r_{BD},F,d)$ and $F$-equivalence classes of basic Lie bialgebra structures $\delta$ on $\g$ such that $\delta_{\fb}$ is $\fb$-equivalent to $r_{BD}$.
\end{Prp}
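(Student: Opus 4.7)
The plan is to verify that $\mathcal F'$ descends to a bijection, closely paralleling the argument already used for the quadratic DJ case. The essential ingredients are Propositions \ref{inK} and \ref{pbd}, Lemma \ref{LK}, and the fact that the relevant group for $F$-equivalence is $\U(n,d)(F)$.

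First I would confirm that $\mathcal F'$ is well-defined on cocycles and factors through the cohomology. The former is immediate from Proposition \ref{pbd}, which asserts that $X^*X=SD_X$ with $D_X\in C(r_{BD})$ is exactly the condition under which $\partial((\Ad_X\otimes\Ad_X)(r_{BD}))$ is a Lie bialgebra on $\g$; the resulting structure is visibly basic with the correct $\fb$-class. To see that $\mathcal F'$ factors through the cohomology, suppose $Y=QXD$ realizes a cohomology relation, so that $(\Ad_D\otimes\Ad_D)$ fixes $r_{BD}$. Then
\[(\Ad_Y\otimes\Ad_Y)(r_{BD})=(\Ad_Q\otimes\Ad_Q)(\Ad_X\otimes\Ad_X)(r_{BD}),\]
and $Q\in\U(n,d)(F)$ witnesses an $F$-equivalence of the coboundaries.

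For surjectivity, take a basic Lie bialgebra structure $\delta$ on $\g$ with $\delta_{\fb}$ being $\fb$-equivalent to $\partial r_{BD}$. By Proposition \ref{Plambda} we may absorb the scalar and write $\delta_{\fb}=\partial((\Ad_X\otimes\Ad_X)(r_{BD}))$ with $X\in\GL(n,\fb)$; by Proposition \ref{inK} we may take $X\in\GL(n,K)$; and by Proposition \ref{pbd} any such $X$ lies in $Z_a(r_{BD},F,d)$, so $\mathcal F'(X)=\delta$.

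The step I expect to require the most care is injectivity. Suppose $\mathcal F'(X)$ and $\mathcal F'(Y)$ are $F$-equivalent via some pair $(\lambda,Q)\in F^*\times\U(n,d)(F)$. Then $(\Ad_Y\otimes\Ad_Y)(r_{BD})$ and $\lambda(\Ad_{QX}\otimes\Ad_{QX})(r_{BD})$ have equal coboundaries, hence differ by a $\g$-invariant element of $\g_{\fb}\otimes\g_{\fb}$, which (since $\g$ is simple) is a scalar multiple of $\Omega$. Comparing symmetric parts of the two $r$-matrices and invoking Lemma \ref{LK} pins down $\lambda=1$ and forces the difference to vanish, giving $(\Ad_{X^{-1}Q^{-1}Y}\otimes\Ad_{X^{-1}Q^{-1}Y})(r_{BD})=r_{BD}$. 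Hence $X^{-1}Q^{-1}Y\in C(r_{BD})$ is diagonal in $\GL(n,K)$, and the resulting factorization $Y=QX\cdot(X^{-1}Q^{-1}Y)$ exhibits the desired cohomology relation.
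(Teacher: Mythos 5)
Your overall architecture (well-definedness of $\mathcal F'$ from Proposition \ref{pbd}, descent to cohomology classes, surjectivity via Proposition \ref{Plambda}, Proposition \ref{inK} and Proposition \ref{pbd}) is the same as the paper's very brief argument, and those parts are fine. The genuine gap is in your injectivity step. Writing $r_X=(\Ad_X\otimes\Ad_X)(r_{BD})$, $r_Y=(\Ad_Y\otimes\Ad_Y)(r_{BD})$, an $F$-equivalence gives $r_Y=\lambda(\Ad_Q\otimes\Ad_Q)(r_X)+c\Omega$; comparing symmetric parts gives $\lambda+2c=1$, and Lemma \ref{LK} then leaves \emph{two} branches, $\mu=0$ or $\mu=\lambda$, not one. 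The second branch is $\lambda=-1$, $c=1$, i.e.\ $r_Y=(\Ad_{QX}\otimes\Ad_{QX})(r_{BD}^{21})$, and it is not vacuous: under the symmetry hypotheses of Proposition \ref{pbd} one has, e.g., $r_{DJ}^{21}=(\Ad_S\otimes\Ad_S)(r_{DJ})$, so $Y=XS$ is again a cocycle and $\mathcal F'(XS)=-\mathcal F'(X)$ is $F$-equivalent to $\mathcal F'(X)$ with $\lambda=-1$, while $XS$ is not visibly of the form $QXD$ with $D$ diagonal. So the assertion that Lemma \ref{LK} ``pins down $\lambda=1$ and forces the difference to vanish'' is false as stated, and injectivity is not yet established. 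To close the gap one must show that in the $\lambda=-1$ branch the cocycles are nevertheless cohomologous; this can be done, but it requires an extra computation, e.g.\ producing a diagonal $E\in\GL(n,K)$ with $XSEX^{-1}\in\U(n,d)(F)$, which amounts to solving $\overline{e_i}\,(D_X)_{ii}\,e_{n+1-i}=(D_X)_{n+1-i,n+1-i}$ and uses the relations $X^*X=SD_X$ and $\overline{(D_X)_{ii}}=(D_X)_{n+1-i,n+1-i}$ that hold for anti-diagonal cocycles.

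A secondary point: in the forward direction you assert that $(\Ad_D\otimes\Ad_D)$ fixes $r_{BD}$ for the diagonal matrix $D$ occurring in a cohomology relation. This is true only if $D\in C(r_{BD})$; the definition in the paper literally allows any diagonal $D\in\GL(n,K)$, and for a non-trivial admissible triple the centralizer is a proper subgroup of the diagonal torus, so this step needs either the restriction $D\in C(r_{BD})$ (which is presumably the intended reading, and is the right fix) or a separate argument. Since the paper's own proof consists of a one-sentence assertion, your write-up is otherwise a reasonable expansion of the same approach, but the $\lambda=-1$ case is a real missing step rather than a routine omission.
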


If $X^*X=Y^*Y$ for two cocycles $X$ and $Y$, then $Q=YX^{-1}$ satisfies $QX=Y$ and $Q^*Q=I$. Thus any given diagonal $D\in \GL(n,K)$ represents at most one cocycle. The problem of classifying such structures is, in its full generality, beyond the scope of this paper. However, when the admissible triple $(\Gamma_1,\Gamma_2,\tau)$ associated to $r_{BD}$ is trivial (i.e.\ satisfies $\Gamma_1=\Gamma_2=\emptyset$), as is the case when $r_{BD}=r_{DJ}$, then the corresponding cohomology is small, as made precise by the following.

\begin{Prp} Assume that $r_{BD}$ is a Belavin--Drinfeld $r$-matrix associated to the trivial admissible triple, and such that $H_a^1(r_{BD},F,d)\neq\emptyset$. Then $H_a^1(r_{BD},F,d)$ consists of precisely one element if $n$ is even, and is in bijection to $F^*/N(K^*)$ if $n$ is odd.
\end{Prp}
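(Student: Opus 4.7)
The plan is to turn the classification into an explicit orbit problem on the diagonal matrices $D_X$, in direct parallel with the treatment of diagonal cocycles in Section~3. First I would record the shape of $D_X$: since $X^*X$ is Hermitian and $S^*=S$, the identity $X^*X=SD_X$ forces $SD_X=\overline{D_X}\,S$, i.e.\ $\overline{d_i}=d_{n+1-i}$ for all $i$. Writing $n=2m$ or $n=2m+1$, the matrix $D_X$ is therefore determined by a tuple $(d_1,\dots,d_m)\in(K^*)^m$, together with an additional entry $d_{m+1}\in F^*$ in the odd case.

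Next I would compute the cohomology action. If $Y=QXD'$ with $Q\in\U(n,d)(F)$ and $D'=\diag(e_1,\dots,e_n)\in\GL(n,K)$, then
\[
Y^*Y=\overline{D'}\,SD_X D'=S\,\sigma(\overline{D'})D_X D',
\]
where $\sigma$ denotes the anti-diagonal reversal of diagonal matrices. Hence $D_Y=\sigma(\overline{D'})D_X D'$ and coordinate-wise $d_Y^i=\overline{e_{n+1-i}}\,e_i\,d_X^i$. A short check confirms that $D_Y$ automatically inherits the Hermitian relation $\overline{d_i}=d_{n+1-i}$, so no condition on $D'$ is needed and every such $D'$ genuinely produces a cohomologous cocycle.

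Third I would read off the invariants. For $i\neq(n+1)/2$ the factor $\overline{e_{n+1-i}}\,e_i$ ranges over all of $K^*$ as $e_i$ and $e_{n+1-i}$ vary over $K^*$ (choose $e_i=1$ and $e_{n+1-i}$ arbitrarily), so $d_X^i$ contributes nothing to the cohomology class. For $i=(n+1)/2$ (only when $n$ is odd) the factor collapses to $N(e_{m+1})\in N(K^*)$, so $d_X^{m+1}$ is invariant modulo $N(K^*)$. Consequently, under the non-emptiness hypothesis, for $n$ even there is no invariant at all, any two cocycles are cohomologous, and $|H_a^1(r_{BD},F,d)|=1$; while for $n$ odd the assignment $[X]\mapsto d_X^{(n+1)/2}\cdot N(K^*)$ is well-defined and injective, embedding $H_a^1(r_{BD},F,d)$ into $F^*/N(K^*)$.

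For the surjectivity of this map in the odd case, I would fix a base cocycle $X_0$ (guaranteed by the non-emptiness hypothesis) and, for each $f\in F^*$, aim to produce a cocycle $X_f$ with middle diagonal entry $f\cdot d_{X_0}^{(n+1)/2}$ by modifying $X_0$ along the central row and column while exploiting the free $K^*$-action on the outer entries to preserve the Hermitian and cocycle conditions. The main obstacle is precisely this step: cohomology-compatible adjustments of a single cocycle only rescale $d_X^{(n+1)/2}$ by $N(K^*)$, so hitting every coset requires constructing genuinely inequivalent cocycles, and this is where the arithmetic of $K/F$ --- in particular quaternion-style existence results analogous to those of Section~4 applied to the central two-dimensional block --- should enter.
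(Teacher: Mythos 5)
Your normalization argument is, in substance, the paper's own proof: you derive $\overline{d_i}=d_{n+1-i}$ from Hermitian symmetry of $X^*X$, act by diagonal matrices $D'$ to trivialize all entries of $D_X$ away from the middle position, and observe that the middle entry only transforms by $N(e_{(n+1)/2})\in N(K^*)$; this gives one class for $n$ even and a well-defined map $[X]\mapsto d_X^{(n+1)/2}N(K^*)$ for $n$ odd. For injectivity (and for ``any two cocycles are cohomologous'' in the even case) you are implicitly using the observation, stated in the paper just before the proposition, that two cocycles with the same $X^*X$ are cohomologous via $Q=YX^{-1}\in\U(n,d)(F)$; you should make that step explicit, but it is available and your use of it is correct.

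The gap is exactly where you located it: surjectivity in the odd case is not proved, so the stated bijection with $F^*/N(K^*)$ is not established by your argument. Two things are worth knowing. First, the paper's proof stops at the same point: its ``conversely'' only shows that every $b$ in the coset $aN(K^*)$ of an already given cocycle is realized by a cocycle cohomologous to it; it never exhibits cocycles whose middle invariant lies in a different coset. Second, the repair you sketch --- constructing genuinely new cocycles with prescribed middle entry via quaternionic norm equations in a central block --- cannot succeed, because of the anti-diagonal analogue of Remark \ref{Rk}: taking determinants in $X^*X=SD_X$ gives $N(\det X)=\det(S)\det(D_X)=(-1)^{\lfloor n/2\rfloor}\,d_{(n+1)/2}\prod_{i<(n+1)/2}N(d_i)$, so the middle entry of \emph{every} cocycle lies in the single coset $(-1)^{\lfloor n/2\rfloor}N(K^*)$. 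Hence the image of your injection is one coset and, under the non-emptiness hypothesis, $H_a^1(r_{BD},F,d)$ is a singleton for odd $n$ as well; for instance, over a $p$-adic field with $K/F$ unramified and $n=3$, the Hermitian form with Gram matrix $SD(a)$ is congruent to the identity exactly when $a\in-N(K^*)$, giving one class while $F^*/N(K^*)$ has two. So your instinct that the surjectivity step is the problematic one was sound: it is not merely missing from your argument (and from the paper's), it is false as stated, and the odd case of the proposition should be read as an injection into $F^*/N(K^*)$ landing in a single determined coset.
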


\begin{proof} Assume that $X^*X=SD_X$ with $D_X=\diag(d_1,\ldots d_n)$. Applying $^*$ to both sides leaves the left hand side invariant, while the right hand side becomes $\overline{D_X}S$. Thus $\overline{D_X}=SD_XS$, which implies that $\overline{d_i}=d_{n+1-i}$ for all $i$. Let now $D$ be the diagonal matrix with $D_{ii}=d_i^{-1}$ for all $i\leq n/2$ and $D_{ii}=1$ otherwise. Then $D\in C(r_{BD})$ since the admissible triple associated to $r_{BD}$ is trivial, whence $X$ is cohomologous to $Y=XD$. On the other hand,
\[Y^*Y=\overline{D}X^*XD=\overline{D}SD_XD.\]

If $n$ is even, then a computation shows that the right hand side is $S$. Thus the identity represents the class of $X$, and since $X$ was an arbitrary cocycle, this proves the claim in this case.

If $n$ is odd, the right hand side equals $SD(a)$ where $D(a)$ is the diagonal matrix $\diag(1,\ldots,1,a,1,\ldots,1)$, where the entry in position $(n+1)/2$ is $a=d_{(n+1)/2}\in F^*$. If $Z=QYD$ satisfies $Z^*Z=SD(b)$ for some $b\in F^*$, then in particular $b=N(c)a$ where $c$ is the middle element on the diagonal of $D$. Thus $b\in N(K^*)a$. Conversely if $b=N(c)a$ for some $c\in K$, then $Z=YD(c)$ satisfies $Z^*Z=D(b)$. This proves the statement in the case where $n$ is odd.
 \end{proof}

If the admissible triple associated to $r_{BD}$ satisfies the conditions of Proposition \ref{pbd}, then the set $H_a^1(r_{BD},F,d)$ may indeed be non-empty, even when the admissible triple is non-trivial. This follows from the following.

\begin{Prp} If $-1$ and 2 are squares in $F$, then there exists $X\in\GL(n,K)$ with $X^*X=S$. 
\end{Prp}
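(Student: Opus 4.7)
The plan is to construct $X$ explicitly in block form, exploiting the orbit structure of the reversal permutation underlying $S$. This permutation consists of the $\lfloor n/2 \rfloor$ transpositions $\{j,\,n+1-j\}$ for $1\leq j \leq n/2$, together with a fixed point at $(n+1)/2$ when $n$ is odd. After regrouping coordinates accordingly, $S$ becomes block-diagonal with $2\times 2$ blocks $S_2=\begin{pmatrix}0 & 1\\ 1 & 0\end{pmatrix}$ and, if $n$ is odd, a single $1\times 1$ block $[1]$. It therefore suffices to exhibit one $2\times 2$ matrix $X_2$ with $X_2^*X_2=S_2$ and to assemble copies of it into a matrix supported on the pair blocks.

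For the $2\times 2$ building block, let me fix $i,w\in F$ with $i^2=-1$ and $w^2=2$, and take
\[
X_2=\frac{1}{w}\begin{pmatrix}1 & 1\\ -i & i\end{pmatrix}.
\]
Since the entries lie in $F$, the conjugation in $X_2^*$ acts trivially, and a direct multiplication (using $i^2=-1$ and $w^2=2$) gives
\[
X_2^*X_2=\frac{1}{w^2}\begin{pmatrix}0 & 2\\ 2 & 0\end{pmatrix}=S_2.
\]
The middle singleton (when $n$ is odd) is handled by the scalar $1$, which trivially satisfies $\overline{1}\cdot 1=1$.

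I then define $X\in\GL(n,F)\subseteq\GL(n,K)$ by placing a copy of $X_2$ in the rows and columns indexed by each pair $\{j,\,n+1-j\}$, and setting $X_{(n+1)/2,(n+1)/2}=1$ if $n$ is odd; all remaining entries are zero. Because $X$ is supported on the diagonal blocks of the pair decomposition, so is $X^*X$, and the block-by-block computation yields $X^*X=S$. Invertibility follows from $\det(X_2)=i\neq 0$ on each $2\times 2$ block (and $1$ on the middle block if present).

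No step presents a real obstacle: the hypotheses that $-1$ and $2$ are squares enter only through the use of $i$ and $w$ in the construction of $X_2$. The conceptual point behind the choice of $X_2$ is that a symmetric sign pattern such as $(1,1;i,i)$ produces $I$ rather than $S$, so the \emph{asymmetric} pattern $(1,1;-i,i)$ is what forces the diagonal entries of $X_2^*X_2$ to vanish, while the factor $w^{-1}$ rescales the off-diagonal $2$'s down to $1$'s.
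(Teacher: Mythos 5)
Your proof is correct and is essentially the paper's own construction: the paper's matrix $\tfrac{1}{\sqrt{2}}Y$ (with the $D(\sqrt{2})$ correction at the middle entry when $n$ is odd) is exactly your block-assembled matrix, agreeing with your $X_2$ on each pair $\{j,n+1-j\}$ up to the choice of sign of $\sqrt{-1}$. The only difference is presentational — you verify the identity block by block, while the paper writes the matrix down globally.
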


\begin{proof} Let  $Y$ be the matrix with entries $Y_{i,i}=Y_{i,n+1-i}=1$ for $i\leq\frac{n+1}{2}$, and $-Y_{i,i}=Y_{i,n+1-i}=\sqrt{-1}$ for $i>\frac{n+1}{2}$, and with all other entries equal to zero. If $n$ is even, then $X=\frac{1}{\sqrt{2}}Y$ satisfies $X^*X=S$. If $n$ is odd, then $X^*X=S$ is satisfied by $X=\frac{1}{\sqrt{2}}D(\sqrt{2})Y$, where $D(\sqrt{2})$ is the diagonal matrix $\diag(1,\ldots,1,\sqrt{2},1,\ldots,1)$, where $\sqrt{2}$ is in position $(n+1)/2$.
\end{proof}

\section{The Twisted Case}
By Proposition \ref{Plambda}, what remains is the case where $r=\sqrt{d'}(\Ad_X\otimes\Ad_X)(r_{BD})$, where $d'\in F^*$ satisfies $\sqrt{d'}\notin K$, $X\in \GL(n,\fb)$ and $r_{BD}$ is a Belavin--Drinfeld $r$-matrix over $\fb$. 

\begin{Prp}\label{nogamma} If $\partial r$ defines a Lie bialgebra structure on $\g$, then the admissible triple $(\Gamma_1,\Gamma_2,\tau)$ associated to $r_{BD}$ is trivial.
\end{Prp}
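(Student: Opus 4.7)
The plan is to mirror the proof of Proposition \ref{onlydj}: derive the identity
\[(\Ad_{X^*X}\otimes\Ad_{X^*X})(r_{BD}^{21})=r_{BD}^*,\]
and then apply the combinatorial wedge-term comparison used there. As a first step I would extend $*$ from $\gl(n,K)$ to $\gl(n,K')$, where $K'=K(\sqrt{d'})$, via the involutive extension of conjugation that sends $\sqrt{d'}\mapsto-\sqrt{d'}$; this is well-defined since $d'\in F^*$. An analogue of Proposition \ref{inK}, applied to case (2) of Theorem \ref{TPS} with $K'$ in place of $K$, then allows me to assume $X\in\GL(n,K')$, so that $X^*\in\GL(n,K')$ and the product $X^*X$ is meaningful.

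Next I would derive the key identity. By Remark \ref{Rlambda} with this choice of extension, $r-r^*=\mu\Omega$ for some $\mu\in\fb$. Applying $*$ to the defining identity $r+r^{21}=\sqrt{d'}\Omega$ yields $r^*+(r^*)^{21}=-\sqrt{d'}\Omega$, and subtracting from the original forces $\mu=\sqrt{d'}$ (as Lemma \ref{LK} confirms). Hence $r^*=r-\sqrt{d'}\Omega=-r^{21}$. Expanding from $r=\sqrt{d'}(\Ad_X\otimes\Ad_X)(r_{BD})$ and using $\overline{\sqrt{d'}}=-\sqrt{d'}$, a short computation analogous to \eqref{djcartan} produces the desired equation above.

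With this equation in hand, I would transcribe the wedge-term analysis from Proposition \ref{onlydj}. The right-hand side contains wedge terms $\sum_{\alpha\in\mathrm{Span}(\Gamma_1)^+}\sum_k e_{-\alpha}\wedge e_{\tau^k(\alpha)}$ coming from $r_{BD}^*$, while the left-hand side contributes $\sum_\alpha\sum_k e_{-\tau^k(\alpha)}\wedge e_\alpha$ after the $(21)$-flip. The gauge equivalence effected by $\Ad_{X^*X}\otimes\Ad_{X^*X}$ must match these indexing sets, which forces $\tau^k(\Gamma_1)\subseteq\Gamma_1$ for every $k\in\mathbb N$. This contradicts the admissibility condition on $(\Gamma_1,\Gamma_2,\tau)$ unless $\Gamma_1=\Gamma_2=\emptyset$.

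The main obstacle I anticipate is the first step: one must commit to the correct sign convention when extending $*$ to $K'$, since the opposite choice gives $\mu=0$ and a different equation (analogous to the basic case of Proposition \ref{pbd}) that does not by itself yield triviality of the admissible triple. One must also formulate and verify the twisted-case analogue of Proposition \ref{inK} that permits moving $X$ into $\GL(n,K')$. Once these field-theoretic preliminaries are settled, the combinatorial heart of the argument is a direct transcription of the reasoning already carried out in the quadratic case.
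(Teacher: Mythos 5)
Your proposal is correct, but it takes a genuinely different route from the paper's. The paper works with the lift of the conjugation that leaves $\sqrt{d'}$ invariant, so that $\mu=0$ and $r^*=r$; this yields $(\Ad_{X^*X}\otimes\Ad_{X^*X})(r_{BD})=r_{BD}^*$, which by itself only gives (after applying $\Ad_S\otimes\Ad_S$ and arguing as in Proposition \ref{pbd}) the condition $s(\Gamma_1)=\Gamma_1$; triviality is then obtained by combining this with the condition $s(\Gamma_1)=\Gamma_2$, imported from \cite{PS} as a necessary condition for $\partial r$ to descend to $\g_K$, together with the defining property of admissible triples. You instead choose the lift sending $\sqrt{d'}\mapsto-\sqrt{d'}$, which gives $\mu=\sqrt{d'}$, hence $r^*=-r^{21}$ and the identity $(\Ad_{X^*X}\otimes\Ad_{X^*X})(r_{BD}^{21})=r_{BD}^*$ of the quadratic case \eqref{djcartan}; the wedge-term comparison of Proposition \ref{onlydj} then forces $\tau^k(\Gamma_1)\subseteq\Gamma_1$ and triviality directly, with no input from \cite{PS} and no use of $S$. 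Both routes are legitimate: Remark \ref{Rlambda} only requires $a^*=-a$ for $a\in\g$, which holds for either extension of the conjugation, so each lift produces a valid constraint on $r$. The paper's choice has the advantage of matching its later setup for twisted cocycles (where the bar fixes $\sqrt{d'}$), while yours buys a self-contained reduction to the combinatorial argument already carried out in the quadratic case.

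Two corrections to your own assessment of the obstacles. First, the ``sign convention'' is not something one must get right: there is no single correct extension of $*$, both extensions restrict to the conjugation of $K/F$, and both resulting relations hold simultaneously; you are simply free to use the more informative one, with $\lambda-\overline{\lambda}=2\mu$ and Lemma \ref{LK} pinning down $\mu$ for each choice. Second, you neither need, nor does the paper supply, a twisted analogue of Proposition \ref{inK} placing $X$ in $\GL(n,K(\sqrt{d'}))$: the derivation of your key identity is valid for $X\in\GL(n,\fb)$, since $^*$ is defined entrywise on $\GL(n,\fb)$ via any lift of the conjugation, and this is exactly how the paper's own proof proceeds. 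You should therefore drop that preliminary step rather than attempt to justify it.
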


\begin{proof} From \cite{PS} we know that $(\Gamma_1,\Gamma_2,\tau)$ must satisfy $s(\Gamma_1)=\Gamma_2$ for $\partial r$ to define a Lie bialgebra structure on $\g_K$. From the proof of Proposition \ref{Plambda} we moreover know that $r=r^*$ must hold in order for $\partial r$ to descend to $\g$. Thus
\[\sqrt{d'}(\Ad_{X^*X}\otimes\Ad_{X^*X})(r_{BD})=(\sqrt{d'}(r_{BD}))^*,\]
and using the fact that, by the same proposition, $\sqrt{d'}$ is invariant under $^*$, we get
\[(\Ad_{X^*X}\otimes\Ad_{X^*X})(r_{BD})=r_{BD}^*.\]
Applying $\Ad_S\otimes\Ad_S$ to both sides and arguing as in Proposition \ref{pbd}, the gauge equivalence of $r_{BD}$ and $(\Ad_S\otimes\Ad_S)(r_{BD}^*)$ implies that $s(\Gamma_1)=\Gamma_1$. Altogether we get $\Gamma_1=\Gamma_2$, whence both sets are empty by construction of an admissible triple.
\end{proof}

A classification of those $X$ for which $\partial r$ defines a Lie bialgebra structure on $\g_K$ were given in \cite{PS} and implies the following characterization.

\begin{Prp} The $r$-matrix $r=\sqrt{d'}(\Ad_X\otimes\Ad_X)(r_{DJ})$ defines a Lie bialgebra structure on $\g_K$ if and only if $X=QJD$ for some $Q\in\GL(n,K)$ and $D\in C(r_{DJ})$.
\end{Prp}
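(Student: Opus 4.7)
The plan is to translate the classification result proven in \cite{PS} into the stated form via the Galois-descent criterion of Remark \ref{Rlambda}.

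First, I would observe that, by the Galois-invariance reasoning in Remark \ref{Rlambda} adapted to $\g_K$ in place of $\g$, the $r$-matrix $r=\sqrt{d'}(\Ad_X\otimes\Ad_X)(r_{DJ})$ defines a Lie bialgebra structure on $\g_K$ if and only if $r-\sigma(r)\in\fb\Omega$ for every $\sigma\in\Gal(\fb/K)$. The descent condition then splits into two cases according to how $\sigma$ acts on $\sqrt{d'}$.

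For $\sigma\in\Gal(\fb/K)$ fixing $\sqrt{d'}$, the analysis is identical to the setting of Proposition \ref{inK} and yields $X^{-1}\sigma(X)\in C(r_{DJ})$, which will be absorbed into the factor $D$. For $\sigma\in\Gal(\fb/K)$ with $\sigma(\sqrt{d'})=-\sqrt{d'}$, setting $Y=X^{-1}\sigma(X)$, the invariance condition becomes
\[(\Ad_Y\otimes\Ad_Y)(r_{DJ})+r_{DJ}\in\fb\Omega.\]
Combined with the identity $r_{DJ}+r_{DJ}^{21}=\Omega$, this amounts to the requirement that $Y$ conjugate $r_{DJ}$ to $r_{DJ}^{21}$, up to a scalar multiple of $\Omega$. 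Note that Proposition \ref{nogamma} is precisely what permits the use of $r_{DJ}$ rather than a more general $r_{BD}$ at this step.

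The matrix $J$ in the statement is, by the construction in \cite{PS}, a fixed element of $\GL(n,\fb)$ whose Galois cocycle $J^{-1}\sigma(J)$ realises this flip for the non-trivial element of $\Gal(K(\sqrt{d'})/K)$. Given such a $J$, any $X$ satisfying the descent conditions can be written as $X=QJD$: the factor $Q\in\GL(n,K)$ is Galois-fixed and thus contributes trivially to $X^{-1}\sigma(X)$, while $D\in C(r_{DJ})$ absorbs both the residual ambiguity coming from $\Gal(\fb/K(\sqrt{d'}))$ and the centraliser freedom that leaves the $r$-matrix invariant. Conversely, any $X$ of the form $QJD$ satisfies the descent conditions by construction, and the two conditions on $D$ and on the flip are precisely what \cite{PS} verifies. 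The main obstacle is the concrete construction of the matrix $J$ implementing the flip, but this is carried out in \cite{PS}; the present proposition simply repackages that construction into the coset decomposition $X\in\GL(n,K)\cdot J\cdot C(r_{DJ})$.
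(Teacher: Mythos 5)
Your proposal is correct and takes essentially the same route as the paper, which likewise obtains this characterization by quoting the classification from \cite{PS} of those $X$ for which $\partial r$ descends to $\g_K$; your added scaffolding (splitting $\Gal(\fb/K)$ by its action on $\sqrt{d'}$, yielding $X^{-1}\sigma(X)\in C(r_{DJ})$ in the fixing case and the flip of $r_{DJ}$ to $r_{DJ}^{21}$, realized by the explicit $J$, in the other) matches how that classification is derived there. The only superfluous point is the appeal to Proposition \ref{nogamma}, which concerns descent to $\g$ rather than $\g_K$ and plays no role in this statement.
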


The matrix $J$ is given by $J_{i,i}=J_{i,n+1-i}=1$ for $i\leq\frac{n+1}{2}$, $-J_{i,i}=J_{i,n+1-i}=\sqrt{d'}$ for $i>\frac{n+1}{2}$, and with all other entries equal to zero. Note that the triviality of the admissible triple implies that $C(r_{BD})$ consists of all diagonal matrices in $\GL(n,\overline{F})$.

\begin{Rk} The arguments used and quoted in \cite{PS} to derive this result work equally well if $r_{DJ}$ is replaced by $r_s+r_{DJ}$ with $r_s\in\h_{\fb}\wedge\h_{\fb}$ satisfying $\overline{r_s}=-r_s$. The extension is effected by applying Lemma \ref{lcartan}. Thus the proposition applies to any Belavin--Drinfeld $r$-matrix associated to the trivial admissible triple. We omit the proof.
\end{Rk}

Applying the condition that $r$ defines a Lie bialgebra structure on $\g$ gives the following.

\begin{Prp} Let $r_{BD}=r_0+r_1$ be a Belavin--Drinfeld $r$-matrix associated to the trivial admissible triple. The $r$-matrix $r=\sqrt{d'}(\Ad_X\otimes\Ad_X)(r_{BD})$ with $X=QJD$ defines a Lie bialgebra structure on $\g$ if and only if $(\Ad_S\otimes\Ad_S)(r_0)=\overline{r_0}$ and  $J^TQ^*QJ=SD$ for some diagonal $D\in\GL(n,K(\sqrt{d}))$.
\end{Prp}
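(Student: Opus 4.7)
The strategy is to adapt Proposition \ref{pbd} to the twisted setting. By the preceding proposition (together with its subsequent remark), the assumption $X=QJD$ with $D$ diagonal already guarantees that $\partial r$ defines a Lie bialgebra structure on $\g_K$. From the analysis of the twisted case in the proof of Proposition \ref{Plambda}, the further descent from $\g_K$ to $\g$ is equivalent to $r=r^*$, since the alternative $\mu=\lambda$ was excluded there. Hence the task is to characterize $r=r^*$ under the given form of $X$. Throughout, we extend the conjugation $\overline{\phantom{x}}$ from $K$ to $\fb$ by fixing $\sqrt{d'}$, so that $(\sqrt{d'})^*=\sqrt{d'}$; since the entries of $J$ lie in $F\cup F\sqrt{d'}$, this choice yields $J^*=J^T$.

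Unfolding $r=r^*$, and using that $\sqrt{d'}$ is fixed by $^*$, gives
\[(\Ad_{X^*X}\otimes\Ad_{X^*X})(r_{BD})=r_{BD}^*.\]
Applying $\Ad_S\otimes\Ad_S$ to both sides and using $\Ad_S(e_\alpha)=e_{-s(\alpha)}$, together with the triviality of the admissible triple (which reduces $r_1$ to $\sum_{\alpha\in\Delta^+}e_\alpha\otimes e_{-\alpha}$) and the fact that $s$ permutes $\Delta^+$, produces
\[(\Ad_{SX^*X}\otimes\Ad_{SX^*X})(r_0+r_1)=(\Ad_S\otimes\Ad_S)(\overline{r_0})+r_1,\]
exactly as in the proof of Proposition \ref{pbd}. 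Applying Lemma \ref{lcartan} to $Y=(SX^*X)^{-1}$ then reduces this equation to the conjunction of $(\Ad_S\otimes\Ad_S)(\overline{r_0})=r_0$, which rewrites as $(\Ad_S\otimes\Ad_S)(r_0)=\overline{r_0}$, and $SX^*X\in C(r_{BD})$. Since the admissible triple is trivial, $C(r_{BD})$ consists of the diagonal matrices, so the second condition is just that $SX^*X$ is diagonal.

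The last step is to translate this into a condition on $J^TQ^*QJ$. Substituting $X=QJD$ and using $J^*=J^T$ together with $D^*=\overline{D}$, one computes
\[SX^*X=S\,\overline{D}\,J^TQ^*QJ\,D.\]
A direct entrywise inspection shows that, since $\overline{D}$ and $D$ are diagonal and invertible, the product on the right is diagonal if and only if $J^TQ^*QJ$ is antidiagonal, i.e.\ of the form $SD'$ for some diagonal matrix $D'$ whose entries lie in the field $K(\sqrt{d'})$ containing both $Q^*Q$ and $J$. Reading the chain of equivalences in both directions yields the proposition.

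The principal technical point will be the careful choice of extension of the conjugation to $\fb$ so that $\sqrt{d'}$ is fixed and $J^*=J^T$; with this in place, the algebraic manipulations with the anti-diagonal matrix $S$ and the various diagonal factors are entirely parallel to those of Proposition \ref{pbd}, and Lemma \ref{lcartan}, applicable thanks to the triviality of the admissible triple established in Proposition \ref{nogamma}, provides the decisive reduction.
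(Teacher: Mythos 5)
Your proposal is correct and follows essentially the same route as the paper: reduce descent (given the structure on $\g_K$) to the condition $r=r^*$ via the twisted case of Proposition \ref{Plambda}, run the $\Ad_S\otimes\Ad_S$ and Lemma \ref{lcartan} argument from Proposition \ref{pbd} using the triviality of the triple to get $(\Ad_S\otimes\Ad_S)(r_0)=\overline{r_0}$ and $X^*X=SD$ with $D$ diagonal, and then translate through $X=QJD$ and $J^*=J^T$ into $J^TQ^*QJ=SD'$. Your remark that the diagonal factor has entries in $K(\sqrt{d'})$ agrees with the paper's proof (the $K(\sqrt{d})$ appearing in the statement is evidently a misprint), and your explicit entrywise justification of the last step merely fills in what the paper leaves as ``follows by construction of $X$''.
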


Here the map $x\mapsto\overline{x}$ is the lift to $\Gal(\fb/F)$ of the involution with respect to $\sqrt{d}$. By Proposition \ref{Plambda}, it leaves $\sqrt{d'}$ invariant. The same holds for the map $x\mapsto x^*$.

\begin{proof} From the proof of Proposition \ref{Plambda} we get $r^*=r$, whence
\[(\Ad_{X^*X}\otimes \Ad_{X^*X})(r_{BD})=r_{BD}^*.\]
The methods used in the proof of Proposition \ref{pbd} are thus applicable. Since the associated triple is trivial, they imply that $\partial r$ defines a Lie bialgebra structure on $\g$ if and only if $(\Ad_S\otimes\Ad_S)(r_0)=\overline{r_0}$ and $X^*X=SD$ for some $D\in C(r_{BD})$. The statement follows by construction of $X$, upon observing that the entries of $D$ belong to $K(\sqrt{d'})$ as this holds for those of the left hand side.
\end{proof}

This allows us to define the following cohomology.

\begin{Def} Let $r_{BD}$ be a Belavin--Drinfeld $r$-matrix with trivial associated admissible triple. We say that $Q\in \GL(n,K)$ is a \emph{compact twisted Belavin--Drinfeld cocycle} if $J^TQ^*QJ=SD_Q$ for some diagonal $D_Q\in \GL(n,K(\sqrt{d'}))$. The set of cocycles is denoted by $\overline{Z}_c(r_{BD}, F, d)$. Two cocycles $Q$ and $R$ are \emph{cohomologous} if $RJ=TQJD$ for some  $T \in\U(n,d)(F)$ and diagonal $D\in \GL(n,K(\sqrt{d'}))$. The set of cohomology classes is denoted by $\overline{H}_c^1(r_{BD},F,d)$.
\end{Def}

As in the non-twisted cases, the following is easy to prove.

\begin{Prp} There is a one-to-one correspondence between $\overline{H}_c^1(r_{BD},F,d)$ and $F$-equivalence classes of basic Lie bialgebra structures $\delta$ on $\g$ such that $\delta_{\fb}$ is $\fb$-equivalent to $r_{BD}$.
\end{Prp}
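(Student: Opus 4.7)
The plan is to mimic the structure used in the earlier sections, namely to define a map $\mathcal F'':\overline{Z}_c(r_{BD},F,d)\to\{\text{twisted Lie bialgebra structures of the stated form on }\g\}$ by $Q\mapsto \partial r_Q$, where $r_Q=\sqrt{d'}(\Ad_{QJ}\otimes\Ad_{QJ})(r_{BD})$, and then to show that it descends to a bijection on the cohomology side. The preceding proposition ensures that $\mathcal F''$ is well-defined, since $\partial r_Q$ is a Lie bialgebra structure on $\g$ precisely when $Q$ is a compact twisted cocycle. The same proposition also establishes surjectivity onto the target class, because any such structure comes from an $X=QJD$ with $D\in C(r_{BD})$, and the factor $\Ad_D\otimes\Ad_D$ fixes $r_{BD}$, so $D$ may be dropped from the representation.

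For the easy direction of the bijection, suppose $Q$ and $R$ are cohomologous via $RJ=TQJD$ with $T\in\U(n,d)(F)$ and $D\in C(r_{BD})$ diagonal in $\GL(n,K(\sqrt{d'}))$. A direct computation gives $r_R=(\Ad_T\otimes\Ad_T)(r_Q)$, using that $\Ad_D\otimes\Ad_D$ fixes $r_{BD}$; since $T$ is $F$-rational, this exhibits a gauge (hence $F$-)equivalence between $\partial r_Q$ and $\partial r_R$. For the converse, assume $\partial r_R=\lambda(\Ad_T\otimes\Ad_T)(\partial r_Q)$ for some $\lambda\in F^*$ and $T\in\U(n,d)(F)$. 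Since the kernel of the coboundary operator on $\g_{\fb}\otimes\g_{\fb}$ consists of the $\g$-invariants $\fb\Omega$, we obtain $r_R-\lambda(\Ad_T\otimes\Ad_T)(r_Q)=c\Omega$ for some $c\in\fb$. Symmetrizing with the $^{21}$-flip and using the identity $r+r^{21}=\sqrt{d'}\Omega$ (satisfied by both $r_R$ and $r_Q$), we find $c=\sqrt{d'}(1-\lambda)/2$. Applying Lemma~\ref{LK} to the non-skewsymmetric pair $r_R$ and $\lambda(\Ad_T\otimes\Ad_T)(r_Q)$ forces $c\in\{0,\sqrt{d'}\}$, so $\lambda=\pm 1$.

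The $\lambda=-1$ case, corresponding to $r_R=((\Ad_T\otimes\Ad_T)(r_Q))^{21}$, is the main obstacle I anticipate: one must either rule it out within the class under consideration or absorb the sign by replacing $T$ with a suitable modification (for instance, exploiting the central $-I\in\U(n,d)(F)$) so as to reduce to $\lambda=1$. Once $\lambda=1$ is secured, the equation $(\Ad_{RJ}\otimes\Ad_{RJ})(r_{BD})=(\Ad_{TQJ}\otimes\Ad_{TQJ})(r_{BD})$ yields $(TQJ)^{-1}RJ\in C(r_{BD})$, which is diagonal because the admissible triple is trivial, and which automatically lies in $\GL(n,K(\sqrt{d'}))$ since $T,Q,R\in\GL(n,K)$ and $J\in\GL(n,K(\sqrt{d'}))$. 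Setting $D=(TQJ)^{-1}RJ$ gives the cohomology relation $RJ=TQJD$, establishing the remaining injectivity. Apart from the $\lambda=\pm 1$ subtlety, the argument is a direct transcription of the corresponding statements proved for the diagonal and anti-diagonal type cohomologies in the preceding sections.
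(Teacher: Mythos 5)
Your route is the one the paper intends: the paper offers no argument beyond the remark that the claim is ``easy to prove'' as in the non-twisted cases, and your map $\mathcal F''$, its well-definedness and surjectivity via the preceding proposition, the implication (cohomologous $\Rightarrow$ equivalent), and the whole $\lambda=1$ branch of the converse are correct transcriptions of the earlier arguments (including the use of $\Ker\,\partial=\fb\Omega$, the symmetrization giving $\lambda=\pm1$, and the centralizer argument producing the diagonal $D$ over $K(\sqrt{d'})$).

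The genuine gap is exactly the branch you flag and leave open, $\lambda=-1$, and the remedy you suggest cannot work: $-I$ is central, so replacing $T$ by $-T$ (or by any central multiple) does not change $\Ad_T\otimes\Ad_T$ and cannot absorb the sign. Moreover the case is not vacuous. Since $r_Q+r_Q^{21}=\sqrt{d'}\Omega$, one always has $\partial\bigl(r_Q^{21}\bigr)=-\partial r_Q$, so $\partial r_Q$ is $F$-equivalent, with $T=1$ and $\lambda=-1$, to $\partial\bigl(r_Q^{21}\bigr)$. On the cocycle side this is realized by another cocycle: using $(\Ad_S\otimes\Ad_S)(r_{BD})=r_{BD}^{21}$ (valid here because the triple is trivial and $(\Ad_S\otimes\Ad_S)(r_s)=\overline{r_s}=-r_s$ under the standing hypotheses; for $r_{DJ}$ it is immediate) together with $JS=EJ$, where $E=\diag(1,\ldots,1,-1,\ldots,-1)$, one checks that $QE$ is again a compact twisted cocycle and $r_{QE}=r_Q^{21}$. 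Hence your converse argument in the $\lambda=-1$ case only yields $(TQJ)^{-1}RJ\in C(r_{BD})\cdot S$, i.e.\ $RJ=TQJSD'=T(QE)JD'$, which is not the cohomology relation between $R$ and $Q$ unless one additionally proves that $Q$ and $QE$ are cohomologous (equivalently, that the anti-diagonal factor $S$ can be removed). Closing that step, or otherwise disposing of the sign, is what your proposal is missing; note that the paper's one-line proof does not address this point either.
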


The computation of the cohomology is however beyond the scope of this paper. 

\bibliographystyle{amsplain}

\end{document}